\UseRawInputEncoding
\documentclass[twoside]{article}
\usepackage{amsmath,amssymb,amsthm,latexsym,bm,indentfirst,wasysym}
\usepackage{graphicx,epsfig,amscd,mathrsfs,multirow,bm}
\usepackage{cite}
\usepackage{caption}
\usepackage{color}
\usepackage[colorlinks,linkcolor=red,anchorcolor=red,citecolor=blue]{hyperref}
\numberwithin{equation}{section}
\usepackage{titlesec}
\titleformat{\section}{\large\bfseries}{\thesection}{1em}{}

\input xy
\xyoption{all}

\allowdisplaybreaks[4]

\newtheorem{theorem}{Theorem}[section]

\newtheorem{definition}[theorem]{Definition}
\newtheorem{proposition}[theorem]{Proposition}

\newtheorem{example}[theorem]{Example}
\newtheorem{remark}[theorem]{Remark}

\catcode`@=11
\long\def\@makefntext#1{\noindent #1}
\newskip\tabcentering \tabcentering=1000pt plus 1000pt minus 1000pt
\def\MCH#1#2{\setbox0=\hbox{\raise#1\hbox{#2}}\smash{\box0}}

\def\@evenfoot{}\def\@oddfoot{}

\def\@evenhead{\hbox to\textwidth{\small\rm\thepage \hfill
{\it  }}} 

\def\@oddhead{\hbox to \textwidth{\small{\it

} \hfill\thepage}}

\catcode`@=12

\def\bc{\begin{center}}
\def\ec{\end{center}}
\def\no{\noindent}

\begin{document}


\abovedisplayskip=6pt plus 1pt minus 1pt \belowdisplayskip=6pt
plus 1pt minus 1pt
\thispagestyle{empty} \vspace*{-1.0truecm} \noindent
\vskip 10mm \bc{\Large\bf Some estimates of weighted Hardy--Littlewood operators and their commutators on mixed Morrey-type spaces in the Dunkl setting
\footnotetext{\footnotesize
Supported by the National Natural Science Foundation of China (No. 12501128) and Higher Education Innovation Fund of Gansu Provincial Department of Education (Grant No.\,2026CXZX-422).\\
* Corresponding author\\
E-mail address: rhliu@nwnu.edu.cn (Liu Ronghui); zhangqimaths@126.com (Zhang Qi)
} } \ec

\vskip 5mm
\bc{Liu Ronghui$^{1}$,\ \ \ Zhang Qi$^{1,*}$}\\
{\small\it $1$.College of Mathematics and Statistics, Northwest Normal University, Lanzhou $730070$, Gansu, P. R. China.\\
}\ec

\vskip 1 mm

{\narrower\noindent{\small {\small\bf Abstract}\ \ In this paper, we consider the weighted Hardy--Littlewood operator
$\mathcal{H}_{\varphi}$ and its commutators in the Dunkl setting. We introduce mixed-norm radial-angular Dunkl central Morrey spaces,
together with their $\lambda$-central counterparts. For the operator $\mathcal{H}_{\varphi}$, we derive the necessary and sufficient conditions on the non-negative weight $\varphi$ that ensure its boundedness on these spaces, and explicitly determine the exact operator norms. For the commutator $\mathcal{H}_{\varphi,b}$, we establish the necessary and sufficient conditions on the weight $\varphi$ such that the commutator is bounded for all symbols $b$ in the mixed-norm radial-angular Dunkl central bounded mean oscillation space $\mathrm{CMO}_{\vec p, \vec q, k}(\mathbb{R}^n)$. Furthermore, for all symbols $b$ in the mixed-norm radial-angular Dunkl $\lambda$-central bounded mean oscillation space $\mathrm{CMO}_{\vec p, \vec q,\lambda, k}(\mathbb{R}^n)$ with positive index $\lambda>0$, we also obtain a valid sufficient condition for boundedness.
\vspace{1mm}\baselineskip 12pt

\no{\small\bf Keywords}\ \ Weighted Hardy--Littlewood operator; Commutators; Mixed-norm radial-angular Dunkl $\lambda$-central bounded mean oscillation space; Mixed-norm radial-angular Dunkl $\lambda$-central Morrey spaces.
\par

\vspace{2mm}

\no{\small\bf MR(2020) Subject Classification\ \ {\rm 44A15; 42B35; 26D15}}

}}

\baselineskip 15pt

\section{Introduction}\label{sec1}

Motivated by the study of spherical functions on Riemannian symmetric spaces, Dunkl \cite{Dunkl4,Dunkl5,Dunkl6,Dunkl7,Dunkl8} introduced a family of commuting differential-difference operators associated with finite reflection groups. These operators, together with their corresponding weighted measures, laid the foundation of Dunkl theory and naturally generalized classical Euclidean harmonic analysis. Real analytic methods involving root systems and Dunkl distances play a central role in this field. For systematic overviews and extensive applications, see \cite{Rosler9,Anker10,Gallardo11,Guliyev12,Ivanov13,Liu14,Mammadov15,Muslimova16}.

Let $\varphi$ be a non-negative measurable function on $(0,1]$. The weighted Hardy--Littlewood operator is defined by
\begin{equation}
\mathcal{H}_{\varphi}f(x)
=\int_{0}^{1} f(tx)\varphi(t)\,\mathrm{d}t,
\end{equation}
whenever the integral is well defined. For a locally integrable function $b$, the commutator generated by $\mathcal{H}_{\varphi}$ and $b$ is given by
\begin{equation}
\mathcal{H}_{\varphi,b}f(x)
=b(x)\mathcal{H}_{\varphi}f(x)-\mathcal{H}_{\varphi}(bf)(x).
\end{equation}
In the one-dimensional case with $\varphi\equiv1$, $\mathcal{H}_{\varphi}$ reduces to the classical Hardy operator, and its generalizations, such as the Hausdorff operators and their corresponding commutators, have attracted considerable research interest over recent decades; see \cite{Xiao2001,andersen2003,moricz2005,chen2012,chen2013,wu2015,karapetyants2020,An2023,Wei2025}.

Morrey \cite{Morrey1938} introduced Morrey spaces to study the regularity of elliptic PDEs. This framework has since been generalized in two distinct directions. First, building on the mixed-norm Lebesgue spaces of Benedek and Panzone \cite{BenedekPanzone1961}, Nogayama \cite{Nogayama2019Mixed} developed mixed Morrey spaces. Second, to characterize local behavior near the origin, Lu and Yang \cite{LuYang1995} and Alvarez et al.\ \cite{Alvarez2000} introduced central BMO and central Morrey spaces. These two concepts were recently unified into mixed central Morrey spaces \cite{Wei2022JMI,LuZhou2024Campanato}. In the Dunkl setting, the intrinsic product form of the measure $\mathrm{d}\mu_k$ makes the study of such mixed-norm spaces particularly natural.

In this paper, we establish sharp boundedness criteria for the weighted Hardy--Littlewood operator $\mathcal{H}_{\varphi}$ and its commutator $\mathcal{H}_{\varphi,b}$ in the Dunkl setting. We obtain necessary and sufficient conditions on the weight $\varphi$ for $\mathcal{H}_{\varphi}$ to be bounded on mixed-norm radial-angular Dunkl central Morrey spaces and mixed-norm radial-angular Dunkl $\lambda$-central Morrey spaces. For the commutator $\mathcal{H}_{\varphi,b}$, we establish analogous necessary and sufficient conditions when the symbol $b$ belongs to the mixed-norm radial-angular Dunkl central BMO space. Furthermore, we provide a valid sufficient condition for its boundedness when $b$ is in the corresponding $\lambda$-central BMO space.

The paper is organized as follows. Section~\ref{sec2} recalls basic Dunkl theory and introduces the relevant function spaces. Section~\ref{sec3} establishes sharp bounds for $\mathcal{H}_{\varphi}$, and Section~\ref{sec4} characterizes the boundedness of the commutator $\mathcal{H}_{\varphi,b}$.

Throughout the paper, $C$ denotes a generic positive constant that may change from line to line. For a measurable set $E \subset \mathbb{R}^n$, $\chi_E$ denotes its characteristic function. We write $B(a,R)$ for the open ball centered at $a \in \mathbb{R}^n$ with radius $R>0$, and $\mathcal{M}(\mathbb{R}^n)$ for the class of all Lebesgue measurable functions on $\mathbb{R}^n$. For a vector exponent $\vec{p}=(p_1,\ldots,p_m)$, the condition $0<\vec{p}<\infty$ means $0<p_i<\infty$ for every $i=1,\ldots,m$. The symbol $A\lesssim B$ denotes that there exists a positive constant $C$ such that $A\leq CB$.

\section{Preliminaries}\label{sec2}

\subsection{Basics of Dunkl Theory}

Dunkl theory relies on root systems and finite reflection groups acting on $\mathbb{R}^n$. For a non-zero vector $\eta \in \mathbb{R}^n$, the reflection $\sigma_\eta$ across the hyperplane $H = \{ x \in \mathbb{R}^n : \langle x, \eta \rangle = 0 \}$ is defined by
$$
\sigma_\eta(x) = x - 2\frac{\langle x, \eta \rangle}{\langle \eta, \eta \rangle} \eta.
$$

A finite set $R \subset \mathbb{R}^n \setminus \{0\}$ is called a root system if it satisfies (i) $\sigma_\eta R = R$ for all $\eta \in R$, and (ii) $R \cap \eta\mathbb{R} = \{\pm \eta\}$. The group generated by the reflections $\{\sigma_\eta\}_{\eta \in R}$ is the reflection group (or Coxeter group) associated with $R$.

Let $e_1,\dots,e_n$ be the standard orthonormal basis of $\mathbb{R}^n$. The reflection with respect to the hyperplane orthogonal to $e_j$ is given by
$$
\sigma_j(x) = x - 2\langle x, e_j \rangle e_j = (x_1, \dots, -x_j, \dots, x_n).
$$
These reflections generate the Weyl group $G$ associated with the root system $R_e = \{ \pm e_1, \dots, \pm e_n \}$, explicitly defined as
$$
G=\left\{\prod\limits_{j=1}^n\sigma_j^{a_j}:(a_1,\ldots, a_n)\in\{0,1\}^n, \ \text{where} \ \sigma_j^{0}=id\right\}.
$$
Note that $G$ is isomorphic to the direct product of cyclic groups, i.e., $G \simeq \mathbb{Z}_2^n$ (see \cite{Cheng1959}).

A multiplicity function $k = (k_1, \dots, k_n)$ is a non-negative, $G$-invariant function on the root system. It serves as a continuous parameter that generalizes the integer dimensions of root spaces in classical Riemannian symmetric spaces. The associated Dunkl measure is defined by
$$
\mathrm{d}\mu_k(x) = \mu_k(x)\,\mathrm{d}x, \quad\text{where}\ \mu_k(x) = \prod_{j=1}^n |\langle e_j, x \rangle|^{2k_j} = \prod_{j=1}^n |x_j|^{2k_j}.
$$
This measure is homogeneous of degree $N_k = n + 2\sum_{j=1}^{n} k_j$, meaning
$$
\mathrm{d}\mu_k(\lambda x) = |\lambda|^{N_k} \mathrm{d}\mu_k(x), \quad \forall \lambda \in \mathbb{R}\setminus\{0\}.
$$
For an open ball $B(x, R)$ of radius $R$ centered at $x$, we have
$$
\mu_k(B(\lambda x, |\lambda| R)) = |\lambda|^{N_k} \mu_k(B(x, R)), \quad \mu_k(B(x, R)) \approx R^n \prod_{j=1}^n (|x_j| + R)^{2k_j}.
$$
Consequently, $\mathrm{d}\mu_k$ is a doubling measure: there exists a constant $C > 0$ such that $\mu_k(B(x, 2R)) \leq C \mu_k(B(x, R))$. Furthermore, there exists $C \geq 1$ such that for any $x \in \mathbb{R}^n$ and $0 < R_1 \leq R_2$,
$$
C^{-1}\left( \frac{R_2}{R_1} \right)^{n} \leq \frac{\mu_k(B(x, R_2))}{\mu_k(B(x, R_1))} \leq C\left( \frac{R_2}{R_1} \right)^{N_k}.
$$

\begin{remark}
The surface area $\omega_{n,k}$ and the volume $\nu_{n,k}$ of the unit ball under the Dunkl measure are given by
\begin{align*}
\omega_{n,k}&= \int_{\mathbb{S}^{n-1}} \,\mathrm{d}\sigma_k(\xi) = \frac{2 \prod_{j=1}^n \Gamma\left(k_j + \frac12\right)}{\Gamma\left(\frac{N_k}{2}\right)},\\
\nu_{n,k} &= \omega_{n,k} \int_0^1 \rho^{N_k-1} \,\mathrm{d}\rho = \frac{\omega_{n,k}}{N_k}.
\end{align*}
If $k_j = 0$ for all $j$, then $\omega_{n,0} = 2\pi^{n/2}/\Gamma(n/2)$ and $\nu_{n,0} = \pi^{n/2}/\Gamma(n/2 + 1)$, which recover the standard geometric constants of the Euclidean space.
\end{remark}

The mutually commuting Dunkl differential-difference operators \cite{Dunkl5} are defined by
$$
\mathcal{D}_j f(x) = \partial_j f(x) + k_j \frac{f(x) - f(\sigma_j(x))}{x_j}, \quad j = 1, \dots, n.
$$

When $k_j = 0$ for all $j$, Dunkl theory completely reduces to the classical Euclidean setting: the Dunkl measure $\mathrm{d}\mu_k(x)$ becomes the Lebesgue measure $\mathrm{d}x$, and the difference term vanishes, reducing $\mathcal{D}_j$ to the standard partial derivative $\partial_j$. Thus, Dunkl theory establishes a framework that extends classical harmonic analysis from spaces with rotational symmetry to those with reflection symmetry. This theory has found extensive applications, notably in the analytic continuation of spherical functions on Riemannian symmetric spaces and the probabilistic construction of stochastic evolution models (see \cite{Rosler9,Heckman1991,RV1998}).

\subsection{Function Spaces in the Dunkl Setting}

We first recall several function spaces with mixed norms which will be used throughout the paper. Mixed Lebesgue spaces were introduced by Benedek and Panzone \cite{BenedekPanzone1961} as a natural extension of the classical Lebesgue spaces. Let
\[
\vec p=(p_1,\ldots,p_n), \qquad 0<\vec p\leq \infty.
\]
For $0<\vec p<\infty$, the mixed Lebesgue norm of a measurable function $f$ on $\mathbb{R}^n$ is defined by
\begin{equation}
\|f\|_{L^{\vec p}(\mathbb{R}^n)}
=
\left(
\int_{\mathbb{R}}
\cdots
\left(
\int_{\mathbb{R}}
\left(
\int_{\mathbb{R}}
|f(x_1,\ldots,x_n)|^{p_1}\,\mathrm{d}x_1
\right)^{\frac{p_2}{p_1}}
\mathrm{d}x_2
\right)^{\frac{p_3}{p_2}}
\cdots
\mathrm{d}x_n
\right)^{\frac{1}{p_n}} .
\end{equation}
If $p_j=\infty$ for some $j$, the corresponding integral is replaced by the essential supremum in the usual way. The mixed Lebesgue space $L^{\vec p}(\mathbb{R}^n)$ consists of all measurable functions $f$ such that $\|f\|_{L^{\vec p}(\mathbb{R}^n)}<\infty$.

Morrey spaces were introduced by Morrey \cite{Morrey1938} in the study of regularity problems for elliptic partial differential equations. Replacing the usual Lebesgue norm in the classical Morrey space by a mixed Lebesgue norm leads to the mixed Morrey spaces introduced by Nogayama \cite{Nogayama2019Mixed}. The corresponding central version was considered by Wei \cite{Wei2022JMI}. More precisely, let $\vec q=(q_1,\ldots,q_n)\in(0,\infty]^n$ and $p\in(0,\infty]$ satisfy
\[
\sum_{j=1}^{n}\frac{1}{q_j}\geq \frac{n}{p}.
\]
The mixed central Morrey space $\dot{M}_{\vec q}^{p}(\mathbb{R}^n)$ is defined as the set of all $f\in\mathcal{M}(\mathbb{R}^n)$ such that
\begin{equation}
\|f\|_{\dot{M}_{\vec q}^{p}(\mathbb{R}^n)}
=\sup_{R>0}|B(0,R)|^{\frac{1}{p}-\frac{1}{n}\sum\limits_{j=1}^{n}\frac{1}{q_j}}\|f\chi_{B(0,R)}\|_{L^{\vec q}(\mathbb{R}^n)}<\infty .
\end{equation}

Following the classical $\lambda$-central Morrey spaces and $\lambda$-central bounded mean oscillation spaces of Alvarez, Guzm\'an-Partida and Lakey \cite{Alvarez2000}, and their mixed-norm extensions considered in \cite{LuZhou2023Fractional,LuZhou2024Campanato},  the mixed $\lambda$-central Morrey space is defined as follows.

\begin{definition}
Let $1<\vec p<\infty$ and $\lambda\in\mathbb{R}$. The mixed $\lambda$-central Morrey space $\dot{B}^{\vec p,\lambda}(\mathbb{R}^n)$ consists of all $f\in\mathcal{M}(\mathbb{R}^n)$ such that
\begin{equation}
\|f\|_{\dot{B}^{\vec p,\lambda}(\mathbb{R}^n)}
=\sup_{R>0}\frac{\|f\chi_{B(0,R)}\|_{L^{\vec p}(\mathbb{R}^n)}}{|B(0,R)|^{\lambda}\|\chi_{B(0,R)}\|_{L^{\vec p}(\mathbb{R}^n)}}<\infty .
\end{equation}
\end{definition}

We next recall the corresponding mean oscillation spaces. For a ball $B\subset\mathbb{R}^n$, we write
\[
f_B=\frac{1}{|B|}\int_B f(y)\,\mathrm{d}y .
\]

\begin{definition}
Let $1<\vec p<\infty$. The mixed bounded mean oscillation space $\mathrm{BMO}_{\vec p}(\mathbb{R}^n)$ is the space of all locally integrable functions $f$ such that
\begin{equation}
\|f\|_{\mathrm{BMO}_{\vec p}(\mathbb{R}^n)}
=
\sup_{B}
\frac{
\|(f-f_B)\chi_B\|_{L^{\vec p}(\mathbb{R}^n)}
}{
\|\chi_B\|_{L^{\vec p}(\mathbb{R}^n)}
}
<\infty ,
\end{equation}
where the supremum is taken over all balls $B\subset\mathbb{R}^n$.
\end{definition}

The central bounded mean oscillation space was introduced by Lu and Yang \cite{LuYang1995}. Its mixed-norm version was used in the study of Hardy-type operators and mixed central Morrey spaces; see, for instance, \cite{Wei2021MixedHerz,Wei2022JMI}.

\begin{definition}
Let $1<\vec p<\infty$ and $\lambda<1/n$. The mixed $\lambda$-central bounded mean oscillation space $\mathrm{CMO}_{\vec p,\lambda}(\mathbb{R}^n)$ consists of all locally integrable functions $f$ such that
\begin{equation}
\|f\|_{\mathrm{CMO}_{\vec p,\lambda}(\mathbb{R}^n)}
=
\sup_{R>0}
\frac{
\|(f-f_{B(0,R)})\chi_{B(0,R)}\|_{L^{\vec p}(\mathbb{R}^n)}
}{
|B(0,R)|^{\lambda}
\|\chi_{B(0,R)}\|_{L^{\vec p}(\mathbb{R}^n)}
}
<\infty .
\end{equation}
When $\lambda=0$, we simply write $\mathrm{CMO}_{\vec p}(\mathbb{R}^n)$.
\end{definition}

The mixed radial-angular spaces $L^{p}_{\rm rad}L^{\tilde{p}}_{\rm ang}(\mathbb R^n)$, as a extension of classical Lebesgue spaces, were defined as follows:

Let $\mathbb{S}^{n-1}$ be the unit sphere in $\mathbb R^n$, $n\geq 2$, with Lebesgue measure $\mathrm{d}\sigma=\mathrm{d}\sigma(\cdot)$,
$$
\|f\|_{L^{p}_{\rm rad}L^{\tilde{p}}_{\rm ang}(\mathbb R^n)}:=\left(\int_{0}^{\infty}\|f(\rho\cdot)\|^p_{L^{\tilde{p}}(\mathbb{S}^{n-1})}\rho^{n-1}\,\mathrm{d}\rho\right)^{\frac{1}{p}},\quad  1\leq p, \tilde{p}\leq\infty,
$$
and when $p=\infty$ or $\tilde{p}=\infty$, we just need to make the usual modifications in the above definition, but we do not use these cases in the current work.

The boundedness for some classical operators in harmonic analysis on mixed radial-angular spaces were investigated successively in \cite{CL,DJ,DL1,LF,LLW,LLW2,LW,LRW,LiuRonghui2026,LiuRonghui2025}.

Inspired by the above ideas, we extend the classical function spaces and operators to the Dunkl setting.

Let the space $\mathbb{R}^n$ be decomposed as the direct product $\mathbb{R}^{n_1} \times \cdots \times \mathbb{R}^{n_m}$, where $\sum\limits_{i=1}^m n_i = n$ and $n\geq 2$. For any $x = (x_1, \dots, x_m) \in \mathbb{R}^n$ with $x_i \in \mathbb{R}^{n_i}$, we write $x_i = \rho_i \omega_i$ in polar coordinates, where $\rho_i > 0$ and $\omega_i \in \mathbb{S}^{n_i-1}$. Let the multiplicity function $k = (k_1, \dots, k_n)$ be correspondingly decomposed into $m$ blocks as $k = (k^{(1)}, \dots, k^{(m)})$, where each sub-vector $k^{(i)}= (k_1^{(i)}, \dots, k_{n_i}^{(i)})$ acts on the block $\mathbb{R}^{n_i}$. The homogeneous dimension of the $i$-th block is defined by
$$
N_{k^{(i)}} = n_i + 2 \sum_{v=1}^{n_i} k^{(i)}_v,\quad i=1\cdots m,
$$
where $k^{(i)}_v$ denotes the $v$-th component of $k^{(i)}$. Consequently, the total homogeneous dimension of $\mathbb{R}^n$ under the Dunkl measure naturally satisfies $N_k = \sum\limits_{i=1}^m N_{k^{(i)}}$.

\begin{definition}
For $1< \vec p = (p_1, \ldots, p_m) < \infty$ and $1< \vec q = (q_1, \ldots, q_m) < \infty$, the mixed-norm radial-angular Dunkl--Lebesgue space $L_{\mathrm{rad}, k}^{\vec p} L_{\mathrm{ang}, k}^{\vec q}(\mathbb{R}^n)$ consists of all measurable functions $f:\mathbb{R}^n\to\mathbb{C}$ with finite norm:
\begin{align*}
&\|f\|_{L_{\mathrm{rad}, k}^{\vec p} L_{\mathrm{ang}, k}^{\vec q}(\mathbb{R}^n)}\\
&= \Biggl( \int_0^\infty \Biggl( \int_{\mathbb{S}^{n_m-1}} \cdots \Biggl( \int_0^\infty \Biggl( \int_{\mathbb{S}^{n_2-1}} \Biggl( \int_0^\infty \Biggl( \int_{\mathbb{S}^{n_1-1}} |f(\rho_1\omega_1, \dots, \rho_m\omega_m)|^{q_1} \,\mathrm{d}\sigma_{k^{(1)}}(\omega_1)
\Biggr)^{\frac{p_1}{q_1}}\\
&\quad \times\rho_1^{N_{k^{(1)}}-1} \,\mathrm{d}\rho_1 \Biggr)^{\frac{q_2}{p_1}} \,\mathrm{d}\sigma_{k^{(2)}}(\omega_2) \Biggr)^{\frac{p_2}{q_2}}
\rho_2^{N_{k^{(2)}}-1} \,\mathrm{d}\rho_2 \Biggr)^{\frac{q_3}{p_2}} \cdots \,\mathrm{d}\sigma_{k^{(m)}}(\omega_m) \Biggr)^{\frac{p_m}{q_m}}
\rho_m^{N_{k^{(m)}}-1} \,\mathrm{d}\rho_m \Biggr)^{\frac{1}{p_m}}< \infty.
\end{align*}
\end{definition}

\begin{definition}
Let $\vec{p} \in (1, \infty)^m$, $\vec{q} \in (1, \infty)^m$, and $0 < r <  \infty$ satisfy $\sum\limits_{i=1}^m \frac{N_{k^{(i)}}}{p_i} \geq \frac{N_k}{r}$. The mixed-norm radial-angular Dunkl central Morrey space $\dot{M}_{\vec{p},\vec{q},k}^r(\mathbb{R}^n)$ consists of all measurable functions $f$ such that
\[
\|f\|_{\dot{M}_{\vec{p},\vec{q},k}^r(\mathbb{R}^n)} = \sup_{R>0} |B(0,R)|_k^{\frac{1}{r} - \frac{1}{N_k}\sum\limits_{i=1}^m \frac{N_{k^{(i)}}}{p_i}} \big\|f\chi_{B(0,R)}\big\|_{L_{\mathrm{rad},k}^{\vec{p}} L_{\mathrm{ang},k}^{\vec{q}}(\mathbb{R}^n)} < \infty.
\]
\end{definition}

\begin{remark}
By letting $x = \rho \xi$ with $\rho \in (0, R)$ and $\xi \in \mathbb{S}^{n-1}$, we have
$$
|B(0, R)|_k = \int_{B(0, R)} \mathrm{d}\mu_k(x) = \left( \int_0^R \rho^{N_k-1} \,\mathrm{d}\rho \right) \left( \int_{\mathbb{S}^{n-1}} \mathrm{d}\sigma_k(\xi) \right) = \frac{R^{N_k}}{N_k} \omega_{n,k}.
$$
Consequently, the volume is given by $|B(0, R)|_k = \nu_{n,k} R^{N_k}$ and $|B(0,tR)|_k=t^{N_k}|B(0,R)|_k$.
\end{remark}

\begin{remark}
This remark establishes the scaling identity of $\chi_{B(0,R)}$ to rigorously justify the normalization exponent of $\dot{M}_{\vec{p},\vec{q},k}^r(\mathbb{R}^n)$.
\begin{align*}
&\|\chi_{B(0,R)}\|_{L^{\vec p}_{\mathrm{rad},k} L^{\vec q}_{\mathrm{ang},k}(\mathbb{R}^n)} \\
&= \Biggl( \int_0^\infty \cdots \Biggl( \int_{\mathbb{S}^{n_2-1}} \Biggl( \int_0^\infty \chi_{\{\sum\limits_{i=1}^m \rho_i^2 < R^2\}}
\Biggl( \int_{\mathbb{S}^{n_1-1}} 1 \,\mathrm{d}\sigma_{k^{(1)}}(\omega_1) \Biggr)^{\frac{p_1}{q_1}}
\rho_1^{N_{k^{(1)}}-1} \,\mathrm{d}\rho_1 \Biggr)^{\frac{q_2}{p_1}} \,\mathrm{d}\sigma_{k^{(2)}}(\omega_2) \Biggr)^{\frac{p_2}{q_2}} \cdots \Biggr)^{\frac{1}{p_m}} \\
&= \Biggl( \int_0^\infty \cdots \Biggl( \int_{\mathbb{S}^{n_2-1}} \omega_{n_1, k^{(1)}}^{\frac{q_2}{q_1}} \Biggl( \int_0^\infty \chi_{\{\sum\limits_{i=1}^m \rho_i^2 < R^2\}}
\rho_1^{N_{k^{(1)}}-1} \,\mathrm{d}\rho_1 \Biggr)^{\frac{q_2}{p_1}} \,\mathrm{d}\sigma_{k^{(2)}}(\omega_2) \Biggr)^{\frac{p_2}{q_2}} \cdots \Biggr)^{\frac{1}{p_m}} \\
&= \Biggl( \int_0^\infty \cdots \Biggl( \omega_{n_1, k^{(1)}}^{\frac{q_2}{q_1}} \omega_{n_2, k^{(2)}} \Biggl( \int_0^\infty \chi_{\{\sum\limits_{i=1}^m \rho_i^2 < R^2\}}
\rho_1^{N_{k^{(1)}}-1} \,\mathrm{d}\rho_1 \Biggr)^{\frac{q_2}{p_1}} \Biggr)^{\frac{p_2}{q_2}} \rho_2^{N_{k^{(2)}}-1} \,\mathrm{d}\rho_2 \cdots \Biggr)^{\frac{1}{p_m}} \\
&= \Biggl( \int_0^\infty \cdots \omega_{n_1, k^{(1)}}^{\frac{p_2}{q_1}} \omega_{n_2, k^{(2)}}^{\frac{p_2}{q_2}} \Biggl( \int_0^\infty \chi_{\{\sum\limits_{i=1}^m \rho_i^2 < R^2\}}
\rho_1^{N_{k^{(1)}}-1} \,\mathrm{d}\rho_1 \Biggr)^{\frac{p_2}{p_1}} \rho_2^{N_{k^{(2)}}-1} \,\mathrm{d}\rho_2 \cdots \Biggr)^{\frac{1}{p_m}} \\
&= \left( \prod_{i=1}^m \big(\omega_{n_i, k^{(i)}}\big)^{\frac{1}{q_i}} \right) \Biggl( \int_0^\infty \cdots \Biggl( \int_0^\infty \chi_{\{\sum\limits_{i=1}^m \rho_i^2 < R^2\}}
\rho_1^{N_{k^{(1)}}-1} \,\mathrm{d}\rho_1 \Biggr)^{\frac{p_2}{p_1}} \cdots \,\mathrm{d}\rho_m \Biggr)^{\frac{1}{p_m}}.
\end{align*}
Next, we perform the radial change of variables $\rho_i = R\tau_i$. We further obtain
\begin{align*}
&\|\chi_{B(0,R)}\|_{L^{\vec p}_{\mathrm{rad},k} L^{\vec q}_{\mathrm{ang},k}(\mathbb{R}^n)}\\
&= \prod\limits_{i=1}^m \big(\omega_{n_i, k^{(i)}}\big)^{\frac{1}{q_i}} \Biggl( \int_0^\infty \cdots \Biggl( \int_0^\infty \chi_{\{\sum\limits_{i=1}^m \tau_i^2 < 1\}}
(R\tau_1)^{N_{k^{(1)}}-1} R\,\mathrm{d}\tau_1 \Biggr)^{\frac{p_2}{p_1}} \cdots R\,\mathrm{d}\tau_m \Biggr)^{\frac{1}{p_m}} \\
&=\prod\limits_{i=1}^m \big(\omega_{n_i, k^{(i)}}\big)^{\frac{1}{q_i}} \Biggl( \int_0^\infty \cdots R^{N_{k^{(1)}}\frac{p_2}{p_1} + N_{k^{(2)}}} \Biggl( \int_0^\infty \chi_{\{\sum\limits_{i=1}^m \tau_i^2 < 1\}}
\tau_1^{N_{k^{(1)}}-1} \,\mathrm{d}\tau_1 \Biggr)^{\frac{p_2}{p_1}} \tau_2^{N_{k^{(2)}}-1} \,\mathrm{d}\tau_2 \cdots \Biggr)^{\frac{1}{p_m}} \\
&= \prod\limits_{i=1}^m \big(\omega_{n_i, k^{(i)}}\big)^{\frac{1}{q_i}} \prod_{i=1}^m R^{\frac{N_{k^{(i)}}}{p_i}} \Biggl( \int_0^\infty \cdots \Biggl( \int_0^\infty \chi_{\{\sum\limits_{i=1}^m \tau_i^2 < 1\}}
\tau_1^{N_{k^{(1)}}-1} \,\mathrm{d}\tau_1 \Biggr)^{\frac{p_2}{p_1}} \cdots \,\mathrm{d}\tau_m \Biggr)^{\frac{1}{p_m}} \\
&= R^{\sum\limits_{i=1}^m \frac{N_{k^{(i)}}}{p_i}}\|\chi_{B(0,1)}\|_{L^{\vec p}_{\mathrm{rad},k} L^{\vec q}_{\mathrm{ang},k}(\mathbb{R}^n)}.
\end{align*}

Recall that the Dunkl volume of the ball is given by $|B(0,R)|_k = \nu_{n,k} R^{N_k}$, which yields $R = (\nu_{n,k})^{-\frac{1}{N_k}} |B(0,R)|_k^{\frac{1}{N_k}}$. So we have
\begin{align*}
\|\chi_{B(0,R)}\|_{L^{\vec p}_{\mathrm{rad},k} L^{\vec q}_{\mathrm{ang},k}(\mathbb{R}^n)}
&= \|\chi_{B(0,1)}\|_{L^{\vec p}_{\mathrm{rad},k} L^{\vec q}_{\mathrm{ang},k}(\mathbb{R}^n)} \left[ (\nu_{n,k})^{-\frac{1}{N_k}} |B(0,R)|_k^{\frac{1}{N_k}} \right]^{\sum\limits_{i=1}^m \frac{N_{k^{(i)}}}{p_i}} \\
&= (\nu_{n,k})^{-\frac{1}{N_k} \sum\limits_{i=1}^m \frac{N_{k^{(i)}}}{p_i}} |B(0,R)|_k^{\frac{1}{N_k}\sum\limits_{i=1}^m \frac{N_{k^{(i)}}}{p_i}}\|\chi_{B(0,1)}\|_{L^{\vec p}_{\mathrm{rad},k} L^{\vec q}_{\mathrm{ang},k}(\mathbb{R}^n)}.
\end{align*}
\end{remark}

\begin{remark}
Let $\vec q=(q_1,\ldots,q_n)\in(1,\infty)^n$ and $r\in(0,\infty)$ satisfying $\sum\limits_{j=1}^{n}\frac{1+2k_j}{q_j} \geq \frac{N_k}{r}$.
By restricting the supremum to balls centered at the origin, we obtain the mixed Dunkl central Morrey space $\dot{M}_{\vec q, k}^{r}(\mathbb{R}^n)$ with the norm
$$
\left\| f \right\|_{\dot{M}_{\vec q, k}^{r}(\mathbb{R}^n)}
=\sup_{R>0}|B(0, R)|_k^{\frac{1}{r}-\frac{1}{N_k}\left(\sum\limits_{j=1}^{n}\frac{1+2k_j}{q_j}\right)}\left\| f\chi_{B(0, R)} \right\|_{L^{\vec q}_k(\mathbb{R}^n)}.
$$
\end{remark}

Note that if we do not distinguish between the radial and angular exponents (i.e., $\vec{p} = \vec{q}$), and we assume $m=n$ (each block is 1-dimensional), the space $\dot{M}_{\vec{p},\vec{q},k}^r(\mathbb{R}^n)$ reduces to the standard mixed Dunkl central Morrey space $\dot{M}_{\vec{q},k}^r(\mathbb{R}^n)$.

\begin{definition}
Let $1<\vec p, \vec q<\infty$ and $\lambda\in\mathbb{R}$. The mixed-norm radial-angular Dunkl $\lambda$-central Morrey space $\dot{B}^{\vec p, \vec q,\lambda}_k(\mathbb{R}^n)$ is defined as the set of all measurable functions $f$ such that
$$
\left\| f \right\|_{\dot{B}^{\vec p, \vec q,\lambda}_k(\mathbb{R}^n)}
=\sup_{R>0}\frac{\left\| f\chi_{B(0, R)} \right\|_{L_{\mathrm{rad}, k}^{\vec p} L_{\mathrm{ang}, k}^{\vec q}(\mathbb{R}^n)}}
{|B(0, R)|_k^{\lambda}\left\| \chi_{B(0, R)} \right\|_{L_{\mathrm{rad}, k}^{\vec p} L_{\mathrm{ang}, k}^{\vec q}(\mathbb{R}^n)}} < \infty.
$$
\end{definition}

\begin{remark}
If $\lambda=0$, for $1<\vec p, \vec q<\infty$, the mixed-norm radial-angular Dunkl bounded mean oscillation space $\mathrm{BMO}_{\vec p, \vec q, k}(\mathbb{R}^n)$ is defined as the set of all measurable functions $f$ such that
$$
\left\| f \right\|_{\mathrm{BMO}_{\vec p, \vec q, k}(\mathbb{R}^n)}
=\sup_{B}\frac{\left\| (f-f_{B, k})\chi_B \right\|_{L_{\mathrm{rad}, k}^{\vec p} L_{\mathrm{ang}, k}^{\vec q}(\mathbb{R}^n)}}
{\left\| \chi_B \right\|_{L_{\mathrm{rad}, k}^{\vec p} L_{\mathrm{ang}, k}^{\vec q}(\mathbb{R}^n)}} < \infty,
$$
where the supremum is taken over all balls $B \subset \mathbb{R}^n$ and we write
\[
f_{B, k}=\frac{1}{|B|_k}\int_B f(y)\,\mathrm{d}\mu_k(y).
\]
\end{remark}

\begin{definition}
Let $\lambda<1/N_k$ and $1<\vec p, \vec q<\infty$. The mixed-norm radial-angular Dunkl $\lambda$-central bounded mean oscillation space $\mathrm{CMO}_{\vec p, \vec q,\lambda, k}(\mathbb{R}^n)$ is defined by
$$
\left\| f \right\|_{\mathrm{CMO}_{\vec p, \vec q,\lambda, k}(\mathbb{R}^n)}
=\sup_{R>0}\frac{\left\| (f-f_{B(0, R), k})\chi_{B(0, R)} \right\|_{L_{\mathrm{rad}, k}^{\vec p} L_{\mathrm{ang}, k}^{\vec q}(\mathbb{R}^n)}}
{|B(0, R)|_k^{\lambda}\left\| \chi_{B(0, R)} \right\|_{L_{\mathrm{rad}, k}^{\vec p} L_{\mathrm{ang}, k}^{\vec q}(\mathbb{R}^n)}} < \infty.
$$
\end{definition}

\begin{remark}
If $\lambda=0$, we get the mixed-norm radial-angular Dunkl central bounded mean oscillation space $\mathrm{CMO}_{\vec p, \vec q, k}(\mathbb{R}^n)$ with the following norm
$$
\left\| f \right\|_{\mathrm{CMO}_{\vec p, \vec q, k}(\mathbb{R}^n)}
=\sup_{R>0}\frac{\left\| (f-f_{B(0, R), k})\chi_{B(0, R)} \right\|_{L_{\mathrm{rad}, k}^{\vec p} L_{\mathrm{ang}, k}^{\vec q}(\mathbb{R}^n)}}
{\left\| \chi_{B(0, R)} \right\|_{L_{\mathrm{rad}, k}^{\vec p} L_{\mathrm{ang}, k}^{\vec q}(\mathbb{R}^n)}} < \infty.
$$
\end{remark}
\subsection{The mixed-norm radial-angular Dunkl central Morrey space}

In this section, we discuss the fundamental properties and typical examples of the mixed-norm radial-angular Dunkl central Morrey spaces $\dot{M}_{\vec{p},\vec{q},k}^r(\mathbb{R}^n)$.

\begin{remark}\label{rem:banach_space}
The mixed Morrey space $\dot{M}_{\vec{p},\vec{q},k}^r(\mathbb{R}^n)$ is a Banach space for $1 < \vec{p}, \vec{q} < \infty$ and $0 < r < \infty$. Although the proof follows standard functional analysis techniques, we briefly outline it for completeness. First, the triangle inequality is verified by Minkowski's inequality. For $f, g \in \dot{M}_{\vec{p},\vec{q},k}^r(\mathbb{R}^n)$,
\begin{align*}
\|f + g\|_{\dot{M}_{\vec{p},\vec{q},k}^r(\mathbb{R}^n)}
&= \sup_{R>0} |B(0,R)|_k^{\frac{1}{r} - \frac{1}{N_k}\sum\limits_{i=1}^m \frac{N_{k^{(i)}}}{p_i}} \big\|(f + g)\chi_{B(0,R)}\big\|_{L_{\mathrm{rad},k}^{\vec{p}} L_{\mathrm{ang},k}^{\vec{q}}} \\
&\le \sup_{R>0} |B(0,R)|_k^{\frac{1}{r} - \frac{1}{N_k}\sum\limits_{i=1}^m \frac{N_{k^{(i)}}}{p_i}} \left( \big\|f\chi_{B(0,R)}\big\|_{L_{\mathrm{rad},k}^{\vec{p}} L_{\mathrm{ang},k}^{\vec{q}}} + \big\|g\chi_{B(0,R)}\big\|_{L_{\mathrm{rad},k}^{\vec{p}} L_{\mathrm{ang},k}^{\vec{q}}} \right) \\
&\le \|f\|_{\dot{M}_{\vec{p},\vec{q},k}^r(\mathbb{R}^n)} + \|g\|_{\dot{M}_{\vec{p},\vec{q},k}^r(\mathbb{R}^n)}.
\end{align*}
Positivity and absolute homogeneity are evident. In particular, $\|f\|_{\dot{M}_{\vec{p},\vec{q},k}^r(\mathbb{R}^n)} = 0 \Rightarrow f = 0$ a.e.\ on every $B(0, R)$, hence $f = 0$ a.e.\ on $\mathbb{R}^n$. Thus, $\dot{M}_{\vec{p},\vec{q},k}^r(\mathbb{R}^n)$ is a normed space.

To check completeness, let $\{f_j\}_{j=1}^\infty \subset \dot{M}_{\vec{p},\vec{q},k}^r(\mathbb{R}^n)$ be a sequence such that $\sum\limits_{j=1}^\infty \|f_j\|_{\dot{M}_{\vec{p},\vec{q},k}^r(\mathbb{R}^n)} < \infty$. For any fixed ball $B(0,R)$ with $R>0$, it follows from the definition of the Morrey norm that
\begin{align*}
\sum_{j=1}^\infty \|f_j\chi_{B(0,R)}\|_{L_{\mathrm{rad},k}^{\vec{p}} L_{\mathrm{ang},k}^{\vec{q}}}
&\le \sum\limits_{j=1}^\infty |B(0,R)|_k^{-\frac{1}{r} + \frac{1}{N_k}\sum\limits_{i=1}^m \frac{N_{k^{(i)}}}{p_i}} \|f_j\|_{\dot{M}_{\vec{p},\vec{q},k}^r(\mathbb{R}^n)} \\
&= |B(0,R)|_k^{-\frac{1}{r} + \frac{1}{N_k}\sum\limits_{i=1}^m \frac{N_{k^{(i)}}}{p_i}} \sum\limits_{j=1}^\infty \|f_j\|_{\dot{M}_{\vec{p},\vec{q},k}^r(\mathbb{R}^n)} < \infty.
\end{align*}
By the completeness of the mixed Dunkl--Lebesgue spaces, $\sum\limits_{j=1}^\infty f_j\chi_{B(0,R)}$ converges in $L_{\mathrm{rad},k}^{\vec{p}} L_{\mathrm{ang},k}^{\vec{q}}(\mathbb{R}^n)$ to a measurable function $g \equiv \sum\limits_{j=1}^\infty f_j$. Using the triangle inequality for partial sums and then passing to the limit yields
\begin{align*}
\|g\|_{\dot{M}_{\vec{p},\vec{q},k}^r(\mathbb{R}^n)}
&= \sup_{R>0} |B(0,R)|_k^{\frac{1}{r} - \frac{1}{N_k}\sum\limits_{i=1}^m \frac{N_{k^{(i)}}}{p_i}} \left\| \sum\limits_{j=1}^\infty f_j \chi_{B(0,R)} \right\|_{L_{\mathrm{rad},k}^{\vec{p}} L_{\mathrm{ang},k}^{\vec{q}}} \\
&\le \sum\limits_{j=1}^\infty \sup_{R>0} |B(0,R)|_k^{\frac{1}{r} - \frac{1}{N_k}\sum\limits_{i=1}^m \frac{N_{k^{(i)}}}{p_i}} \|f_j\chi_{B(0,R)}\|_{L_{\mathrm{rad},k}^{\vec{p}} L_{\mathrm{ang},k}^{\vec{q}}} \\
&= \sum_{j=1}^\infty \|f_j\|_{\dot{M}_{\vec{p},\vec{q},k}^r(\mathbb{R}^n)} < \infty.
\end{align*}
This shows that $g \in \dot{M}_{\vec{p},\vec{q},k}^r(\mathbb{R}^n)$. Finally, to prove that the partial sums converge to $g$ in the Morrey norm, we estimate the tail of the series. For any integer $N \ge 1$,
\begin{align*}
\left\| g - \sum_{j=1}^N f_j \right\|_{\dot{M}_{\vec{p},\vec{q},k}^r(\mathbb{R}^n)}
&= \left\| \sum_{j=N+1}^\infty f_j \right\|_{\dot{M}_{\vec{p},\vec{q},k}^r(\mathbb{R}^n)} \\
&\le \sum_{j=N+1}^\infty \|f_j\|_{\dot{M}_{\vec{p},\vec{q},k}^r(\mathbb{R}^n)}.
\end{align*}
Since $\sum\limits_{j=1}^\infty \|f_j\|_{\dot{M}_{\vec{p},\vec{q},k}^r(\mathbb{R}^n)} < \infty$, the right-hand side tends to $0$ as $N \to \infty$. Thus, $\sum\limits_{j=1}^N f_j \to g$ in $\dot{M}_{\vec{p},\vec{q},k}^r(\mathbb{R}^n)$, confirming that the space is complete.
\end{remark}

Just as with mixed Lebesgue spaces, the mixed Morrey norm satisfies a precise dilation relation. Utilizing the scaling identity derived in the previous section, for any $t > 0$, we have
\begin{equation}
\|f(t\cdot)\|_{\dot{M}_{\vec{p},\vec{q},k}^r(\mathbb{R}^n)} = t^{-\frac{N_k}{r}} \|f\|_{\dot{M}_{\vec{p},\vec{q},k}^r(\mathbb{R}^n)}. \label{eq:dilation}
\end{equation}

Next, we establish the embedding properties of these spaces.

\begin{proposition}\label{prop:embedding}
Let $1 < \vec{p}_1 \le \vec{p}_2 <\infty$ and $1 < \vec{q}_1 \le \vec{q}_2 < \infty$. Assume $r \in (0, \infty)$ satisfies the exponent constraint $\frac{1}{r} \le \frac{1}{N_k}\sum\limits_{i=1}^m \frac{N_{k^{(i)}}}{p_{2i}}$. Then,
\[
\dot{M}_{\vec{p}_2,\vec{q}_2,k}^r(\mathbb{R}^n) \subset \dot{M}_{\vec{p}_1,\vec{q}_1,k}^r(\mathbb{R}^n).
\]
\end{proposition}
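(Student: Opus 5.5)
The plan is to derive the embedding from a mixed-norm H\"older inequality on each ball $B(0,R)$, combined with the scaling identity for $\|\chi_{B(0,R)}\|_{L^{\vec p}_{\mathrm{rad},k}L^{\vec q}_{\mathrm{ang},k}}$ established in the Remark above. Define the auxiliary exponents $\vec s=(s_1,\dots,s_m)$ and $\vec t=(t_1,\dots,t_m)$ by
\[
\frac{1}{p_{1i}}=\frac{1}{p_{2i}}+\frac{1}{s_i},\qquad \frac{1}{q_{1i}}=\frac{1}{q_{2i}}+\frac{1}{t_i},\qquad i=1,\dots,m,
\]
so that $s_i,t_i\in(1,\infty]$, with the value $\infty$ exactly when $p_{1i}=p_{2i}$ (respectively $q_{1i}=q_{2i}$).

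\textbf{Step 1 (mixed-norm H\"older).} First I will prove
\[
\|fg\|_{L^{\vec p_1}_{\mathrm{rad},k}L^{\vec q_1}_{\mathrm{ang},k}}\le \|f\|_{L^{\vec p_2}_{\mathrm{rad},k}L^{\vec q_2}_{\mathrm{ang},k}}\,\|g\|_{L^{\vec s}_{\mathrm{rad},k}L^{\vec t}_{\mathrm{ang},k}} .
\]
The argument is an induction over the $2m$ nested integrations, working from the innermost one outward, in the manner of Benedek--Panzone.
\begin{itemize}
\item At the angular level $\omega_1$, apply the scalar H\"older inequality with exponents $q_{2,1}/q_{1,1}$ and $t_1/q_{1,1}$.
\item At the radial level $\rho_1$, apply it to the resulting product of partial norms with exponents $p_{2,1}/p_{1,1}$ and $s_1/p_{1,1}$.
\item Repeat for the blocks $i=2,\dots,m$.
\end{itemize}
Whenever an auxiliary exponent equals $\infty$, the corresponding step is replaced by the trivial bound through the essential supremum, with the usual conventions for $L^\infty$. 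Only the H\"older structure of each level is used, so $g$ need not have product form. This step is the only place with genuine bookkeeping, and I expect it to be the main obstacle. The difficulty is notational: one must keep track of the partial norms as functions of the remaining variables and check measurability. It is not conceptual.

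\textbf{Step 2 (norm of the characteristic function).} Take $g=\chi_{B(0,R)}$. Repeating the computation of the scaling Remark with $(\vec s,\vec t)$ in place of $(\vec p,\vec q)$ gives
\[
\|\chi_{B(0,R)}\|_{L^{\vec s}_{\mathrm{rad},k}L^{\vec t}_{\mathrm{ang},k}}=C_0\,|B(0,R)|_k^{\frac{1}{N_k}\sum_{i}N_{k^{(i)}}\left(\frac1{p_{1i}}-\frac1{p_{2i}}\right)} .
\]
Here $C_0=\nu_{n,k}^{-\frac{1}{N_k}\sum_i N_{k^{(i)}}/s_i}\|\chi_{B(0,1)}\|_{L^{\vec s}_{\mathrm{rad},k}L^{\vec t}_{\mathrm{ang},k}}$ is finite and independent of $R$. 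The only new feature is that an infinite exponent contributes $R^0$ and an angular factor $1$, which is consistent with the formula.

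\textbf{Step 3 (assemble).} Apply Step 1 to $f\chi_{B(0,R)}=(f\chi_{B(0,R)})\chi_{B(0,R)}$ and multiply by $|B(0,R)|_k^{\frac1r-\frac1{N_k}\sum_i N_{k^{(i)}}/p_{1i}}$. The powers of $|B(0,R)|_k$ combine exactly into $|B(0,R)|_k^{\frac1r-\frac1{N_k}\sum_i N_{k^{(i)}}/p_{2i}}$. Taking the supremum over $R>0$ then yields
\[
\|f\|_{\dot M^r_{\vec p_1,\vec q_1,k}}\le C_0\|f\|_{\dot M^r_{\vec p_2,\vec q_2,k}} .
\]

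It remains to check that both spaces are well defined. The hypothesis $\frac1r\le\frac1{N_k}\sum_i N_{k^{(i)}}/p_{2i}$ is exactly the admissibility condition for the $\vec p_2$ space. Since $p_{1i}\le p_{2i}$, it also implies the same condition with $\vec p_1$ in place of $\vec p_2$, so the target space is admissible as well.
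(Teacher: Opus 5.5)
Your proof is correct and is essentially the paper's argument. The paper also applies H\"older level by level against the constant function, on the product set $\prod_{i}(0,R]\times\mathbb{S}^{n_i-1}\supset B(0,R)$, which produces the factors $\omega_{n_i,k^{(i)}}^{1/q_{1i}-1/q_{2i}}$ and $R^{N_{k^{(i)}}(1/p_{1i}-1/p_{2i})}$, and then converts the power of $R$ into a power of $|B(0,R)|_k$; your packaging as a mixed-norm H\"older inequality with $g=\chi_{B(0,R)}$ plus the scaling identity does the same exponent bookkeeping, with a constant no larger than the paper's.
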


\begin{proof}
To obtain this inclusion, it suffices to show that for all measurable functions $f$ and all balls $B(0,R)$,
\begin{equation*}
|B(0,R)|_k^{\frac{1}{r} - \frac{1}{N_k}\sum\limits_{i=1}^m \frac{N_{k^{(i)}}}{p_{1i}}} \|f\chi_{B(0,R)}\|_{L_{\mathrm{rad},k}^{\vec{p}_1} L_{\mathrm{ang},k}^{\vec{q}_1}}
\lesssim  |B(0,R)|_k^{\frac{1}{r} - \frac{1}{N_k}\sum\limits_{i=1}^m \frac{N_{k^{(i)}}}{p_{2i}}} \|f\chi_{B(0,R)}\|_{L_{\mathrm{rad},k}^{\vec{p}_2} L_{\mathrm{ang},k}^{\vec{q}_2}}.
\end{equation*}
Taking the supremum over all $R > 0$ directly yields $$\|f\|_{\dot{M}_{\vec{p}_1,\vec{q}_1,k}^r} \lesssim \|f\|_{\dot{M}_{\vec{p}_2,\vec{q}_2,k}^r}.$$

Let $F \ge 0$ be a generic non-negative function. Since $\operatorname{supp}(f\chi_{B(0,R)}) \subset \prod\limits_{i=1}^m (0, R] \times \mathbb{S}^{n_i-1}$, we iteratively apply H\"older's inequality, assuming $q_{2i} \ge q_{1i}$,
\begin{align*}
\left( \int_{\mathbb{S}^{n_i-1}} F(\rho_i \omega_i)^{q_{1i}} \,\mathrm{d}\sigma_{k^{(i)}}(\omega_i) \right)^{\frac{1}{q_{1i}}}
&\le \left( \int_{\mathbb{S}^{n_i-1}} F(\rho_i \omega_i)^{q_{2i}} \,\mathrm{d}\sigma_{k^{(i)}}(\omega_i) \right)^{\frac{1}{q_{2i}}} \left( \int_{\mathbb{S}^{n_i-1}} 1 \,\mathrm{d}\sigma_{k^{(i)}}(\omega_i) \right)^{\frac{1}{q_{1i}} - \frac{1}{q_{2i}}} \\
&= \omega_{n_i, k^{(i)}}^{\frac{1}{q_{1i}} - \frac{1}{q_{2i}}} \left( \int_{\mathbb{S}^{n_i-1}} F(\rho_i \omega_i)^{q_{2i}} \,\mathrm{d}\sigma_{k^{(i)}}(\omega_i) \right)^{\frac{1}{q_{2i}}}.
\end{align*}
Applying H\"older's inequality to the radial integral on $(0, R]$ (for $p_{2i} \ge p_{1i}$) gives
\begin{align*}
&\left( \int_0^R \left( \int_{\mathbb{S}^{n_i-1}} F(\rho_i \omega_i)^{q_{1i}} \,\mathrm{d}\sigma_{k^{(i)}}(\omega_i) \right)^{\frac{p_{1i}}{q_{1i}}} \rho_i^{N_{k^{(i)}}-1} \,\mathrm{d}\rho_i \right)^{\frac{1}{p_{1i}}} \\
&\le \omega_{n_i, k^{(i)}}^{\frac{1}{q_{1i}} - \frac{1}{q_{2i}}} \left( \int_0^R \left( \int_{\mathbb{S}^{n_i-1}} F(\rho_i \omega_i)^{q_{2i}} \,\mathrm{d}\sigma_{k^{(i)}}(\omega_i) \right)^{\frac{p_{1i}}{q_{2i}}} \rho_i^{N_{k^{(i)}}-1} \,\mathrm{d}\rho_i \right)^{\frac{1}{p_{1i}}} \\
&\le \omega_{n_i, k^{(i)}}^{\frac{1}{q_{1i}} - \frac{1}{q_{2i}}} \left( \int_0^R \left( \int_{\mathbb{S}^{n_i-1}} F(\rho_i \omega_i)^{q_{2i}} \,\mathrm{d}\sigma_{k^{(i)}}(\omega_i) \right)^{\frac{p_{2i}}{q_{2i}}} \rho_i^{N_{k^{(i)}}-1} \,\mathrm{d}\rho_i \right)^{\frac{1}{p_{2i}}} \left( \int_0^R \rho_i^{N_{k^{(i)}}-1} \,\mathrm{d}\rho_i \right)^{\frac{1}{p_{1i}} - \frac{1}{p_{2i}}} \\
&=\omega_{n_i, k^{(i)}}^{\frac{1}{q_{1i}} - \frac{1}{q_{2i}}} (N_{k^{(i)}})^{\frac{1}{p_{2i}} - \frac{1}{p_{1i}}} R^{N_{k^{(i)}}\left(\frac{1}{p_{1i}} - \frac{1}{p_{2i}}\right)} \left( \int_0^R \left( \int_{\mathbb{S}^{n_i-1}} F(\rho_i \omega_i)^{q_{2i}} \,\mathrm{d}\sigma_{k^{(i)}}(\omega_i) \right)^{\frac{p_{2i}}{q_{2i}}} \rho_i^{N_{k^{(i)}}-1} \,\mathrm{d}\rho_i \right)^{\frac{1}{p_{2i}}}.
\end{align*}
Iterating over $i = 1, \dots, m$ yields
\[
\|f\chi_{B(0,R)}\|_{L_{\mathrm{rad},k}^{\vec{p}_1} L_{\mathrm{ang},k}^{\vec{q}_1}} \le \prod_{i=1}^m \omega_{n_i, k^{(i)}}^{\frac{1}{q_{1i}} - \frac{1}{q_{2i}}} (N_{k^{(i)}})^{\frac{1}{p_{2i}} - \frac{1}{p_{1i}}} R^{\sum\limits_{i=1}^m N_{k^{(i)}}\left(\frac{1}{p_{1i}} - \frac{1}{p_{2i}}\right)} \|f\chi_{B(0,R)}\|_{L_{\mathrm{rad},k}^{\vec{p}_2} L_{\mathrm{ang},k}^{\vec{q}_2}}.
\]

Since $|B(0,R)|_k = \nu_{n,k} R^{N_k}$, substituting $R = \nu_{n,k}^{-\frac{1}{N_k}} |B(0,R)|_k^{\frac{1}{N_k}}$ gives
\begin{align*}
R^{\sum\limits_{i=1}^m N_{k^{(i)}}\left(\frac{1}{p_{1i}} - \frac{1}{p_{2i}}\right)}
&= \nu_{n,k}^{-\frac{1}{N_k}\sum\limits_{i=1}^m N_{k^{(i)}}\left(\frac{1}{p_{1i}} - \frac{1}{p_{2i}}\right)} |B(0,R)|_k^{\frac{1}{N_k}\sum\limits_{i=1}^m \frac{N_{k^{(i)}}}{p_{1i}} - \frac{1}{N_k}\sum\limits_{i=1}^m \frac{N_{k^{(i)}}}{p_{2i}}}.
\end{align*}
Multiplying by $|B(0,R)|_k^{\frac{1}{r} - \frac{1}{N_k}\sum\limits_{i=1}^m \frac{N_{k^{(i)}}}{p_{1i}}}$, we obtain
\begin{align*}
&|B(0,R)|_k^{\frac{1}{r} - \frac{1}{N_k}\sum\limits_{i=1}^m \frac{N_{k^{(i)}}}{p_{1i}}} \|f\chi_{B(0,R)}\|_{L_{\mathrm{rad},k}^{\vec{p}_1} L_{\mathrm{ang},k}^{\vec{q}_1}} \\
&\lesssim |B(0,R)|_k^{\frac{1}{r} - \frac{1}{N_k}\sum\limits_{i=1}^m \frac{N_{k^{(i)}}}{p_{1i}}} \cdot |B(0,R)|_k^{\frac{1}{N_k}\sum\limits_{i=1}^m \frac{N_{k^{(i)}}}{p_{1i}} - \frac{1}{N_k}\sum\limits_{i=1}^m \frac{N_{k^{(i)}}}{p_{2i}}} \|f\chi_{B(0,R)}\|_{L_{\mathrm{rad},k}^{\vec{p}_2} L_{\mathrm{ang},k}^{\vec{q}_2}} \\
&=|B(0,R)|_k^{\frac{1}{r} - \frac{1}{N_k}\sum\limits_{i=1}^m \frac{N_{k^{(i)}}}{p_{2i}}} \|f\chi_{B(0,R)}\|_{L_{\mathrm{rad},k}^{\vec{p}_2} L_{\mathrm{ang},k}^{\vec{q}_2}}.
\end{align*}
\end{proof}

Let us give some fundamental examples of functions in these spaces.

\begin{example}\label{ex:radial_decay}
Let $\vec{\alpha} = (\alpha_1, \dots, \alpha_m)$ with $\alpha_i > 0$ such that $\sum\limits_{i=1}^m \alpha_i = \frac{N_k}{r}$. Then
\begin{equation}
f(x) = \prod_{i=1}^m \rho_i^{-\alpha_i} \in \dot{M}_{\vec{p},\vec{q},k}^r(\mathbb{R}^n) \label{eq:radial_function}
\end{equation}
if and only if $p_i < \frac{N_{k^{(i)}}}{\alpha_i}$ for all $i = 1, \dots, m$.
\end{example}

\begin{proof}
Using the radial dilation $\rho_i = R\tau_i$, we have
\[
\|f\chi_{B(0,R)}\|_{L_{\mathrm{rad},k}^{\vec{p}} L_{\mathrm{ang},k}^{\vec{q}}} = R^{-\sum\limits_{i=1}^m \alpha_i + \sum\limits_{i=1}^m \frac{N_{k^{(i)}}}{p_i}} \|f\chi_{B(0,1)}\|_{L_{\mathrm{rad},k}^{\vec{p}} L_{\mathrm{ang},k}^{\vec{q}}}.
\]
Let $Q_c = \{\rho_i \in (0, c): 1 \le i \le m\}$. Since $\chi_{Q_{1/\sqrt{m}}} \le \chi_{B(0,1)} \le \chi_{Q_1}$, the norm of $f\chi_{B(0,1)}$ is finite if the norm over $Q_1$ is finite. Because $f$ is purely radial, the mixed norm over the product domain $Q_1$ completely decouples into independent angular and radial evaluations:
\begin{align*}
\|f\chi_{Q_1}\|_{L_{\mathrm{rad},k}^{\vec{p}} L_{\mathrm{ang},k}^{\vec{q}}}
&=\left( \int_{\mathbb{S}^{n_m-1}} \dots \left( \int_{\mathbb{S}^{n_2-1}} \left( \int_{\mathbb{S}^{n_1-1}} 1^{q_1} \,\mathrm{d}\sigma_{k^{(1)}}(\omega_1) \right)^{\frac{q_2}{q_1}} \,\mathrm{d}\sigma_{k^{(2)}}(\omega_2) \right)^{\frac{q_3}{q_2}} \dots \,\mathrm{d}\sigma_{k^{(m)}}(\omega_m) \right)^{\frac{1}{q_m}} \\
&\quad\times\prod_{i=1}^m \left( \int_0^1 \rho_i^{-\alpha_i p_i + N_{k^{(i)}}-1} \,\mathrm{d}\rho_i \right)^{\frac{1}{p_i}} \\
&= \left( \prod_{i=1}^m \omega_{n_i, k^{(i)}}^{\frac{1}{q_i}} \right) \prod_{i=1}^m \left( \int_0^1 \rho_i^{-\alpha_i p_i + N_{k^{(i)}}-1} \,\mathrm{d}\rho_i \right)^{\frac{1}{p_i}}.
\end{align*}
This product is finite if and only if each one-dimensional integral converges at the origin, which requires $-\alpha_i p_i + N_{k^{(i)}} - 1 > -1$, or equivalently, $p_i < \frac{N_{k^{(i)}}}{\alpha_i}$ for all $i = 1, \dots, m$.

Assuming this finiteness condition holds, and utilizing $|B(0,R)|_k = \nu_{n,k} R^{N_k}$ along with the constraint $\sum\limits_{i=1}^m \alpha_i = \frac{N_k}{r}$, we obtain
\begin{align*}
\|f\|_{\dot{M}_{\vec{p},\vec{q},k}^r(\mathbb{R}^n)}
&= \sup_{R>0} |B(0,R)|_k^{\frac{1}{r} - \frac{1}{N_k}\sum\limits_{i=1}^m \frac{N_{k^{(i)}}}{p_i}} \|f\chi_{B(0,R)}\|_{L_{\mathrm{rad},k}^{\vec{p}} L_{\mathrm{ang},k}^{\vec{q}}} \\
&= \sup_{R>0} \left( \nu_{n,k} R^{N_k} \right)^{\frac{1}{r} - \frac{1}{N_k}\sum\limits_{i=1}^m \frac{N_{k^{(i)}}}{p_i}} R^{-\sum\limits_{i=1}^m \alpha_i + \sum\limits_{i=1}^m \frac{N_{k^{(i)}}}{p_i}} \|f\chi_{B(0,1)}\|_{L_{\mathrm{rad},k}^{\vec{p}} L_{\mathrm{ang},k}^{\vec{q}}} \\
&= \sup_{R>0} \nu_{n,k}^{\frac{1}{r} - \frac{1}{N_k}\sum\limits_{i=1}^m \frac{N_{k^{(i)}}}{p_i}} R^{\frac{N_k}{r} - \sum\limits_{i=1}^m \alpha_i} \|f\chi_{B(0,1)}\|_{L_{\mathrm{rad},k}^{\vec{p}} L_{\mathrm{ang},k}^{\vec{q}}} \\
&= \nu_{n,k}^{\frac{1}{r} - \frac{1}{N_k}\sum\limits_{i=1}^m \frac{N_{k^{(i)}}}{p_i}} \|f\chi_{B(0,1)}\|_{L_{\mathrm{rad},k}^{\vec{p}} L_{\mathrm{ang},k}^{\vec{q}}} \\
&< \infty.
\end{align*}
\end{proof}

\begin{remark}\label{rem:necessity}
In Example \ref{ex:radial_decay}, the condition $\sum\limits_{i=1}^m \alpha_i = \frac{N_k}{r}$ is an absolute necessity for $f$ to be a non-trivial member of $\dot{M}_{\vec{p},\vec{q},k}^r(\mathbb{R}^n)$. This is easily verified by the dilation property \eqref{eq:dilation}. Note that $f(t x) = t^{-\sum\limits_{i=1}^m \alpha_i} f(x)$. If $f \in \dot{M}_{\vec{p},\vec{q},k}^r(\mathbb{R}^n)$ and $f \not\equiv 0$, applying \eqref{eq:dilation} gives
\[
t^{-\frac{N_k}{r}} \|f\|_{\dot{M}_{\vec{p},\vec{q},k}^r(\mathbb{R}^n)} = \|f(t\cdot)\|_{\dot{M}_{\vec{p},\vec{q},k}^r(\mathbb{R}^n)} = t^{-\sum\limits_{i=1}^m \alpha_i} \|f\|_{\dot{M}_{\vec{p},\vec{q},k}^r(\mathbb{R}^n)}.
\]
For this to hold for all $t > 0$, the exponents must be strictly equal.
\end{remark}

\subsection{Operators and Commutators in the Dunkl setting}\label{subsec:operators}

Recall that, in the Dunkl setting, the generalized Hausdorff operator \cite{LZZ2026} is defined by
\[
H_{\Phi,s,k}f(x)
=
\int_{\mathbb{R}^n}
\frac{\Phi(y)}{|y|^{N_k}}
f(s(|y|)x)\,\mathrm{d}\mu_k(y).
\]

The weighted Hardy--Littlewood operator considered in this paper is a special case of \(H_{\Phi,s,k}\). Let
\[
\Phi(y)=\frac{1}{\omega_{n,k}}|y|\varphi(|y|)
\chi_{\{|y|\leq1\}}(y),
\qquad s(t)=t.
\]
Let $\varphi$ be a non-negative measurable function on $(0,1]$. Utilizing the polar coordinate, we have
\[
\begin{aligned}
H_{\Phi,s,k}f(x)
&=\int_{\mathbb{R}^n}\frac{\Phi(y)}{|y|^{N_k}}f(|y|x)\,\mathrm{d}\mu_k(y)\\
&=\frac{1}{\omega_{n,k}}\int_{\mathbb{S}^{n-1}}\int_0^1\frac{\rho\varphi(\rho)}{\rho^{N_k}}f(\rho x)\rho^{N_k-1}\,\mathrm{d}\rho\,\mathrm{d}\sigma_k(\theta)\\
&=\int_0^1 f(\rho x)\varphi(\rho)\,\mathrm{d}\rho.
\end{aligned}
\]
Hence, with $\rho=t$, the weighted Hardy--Littlewood operator takes the simple form
\[
\mathcal{H}_{\varphi}f(x)
=\int_0^1 f(tx)\varphi(t)\,\mathrm{d}t.
\]
Consequently, for a locally integrable function $b$, the corresponding commutator $\mathcal{H}_{\varphi}^{b}$ in the Dunkl setting is defined by
$$
\mathcal{H}_{\varphi}^{b} f(x) = b(x)\mathcal{H}_\varphi f(x) - \mathcal{H}_\varphi(bf)(x) = \int_0^1 (b(x)-b(tx)) f(tx)\varphi(t)\,\mathrm{d}t.
$$
\section{Sharp constants for $\mathcal{H}_{\varphi}$ on $\dot{M}_{\vec p, \vec q, k}^r(\mathbb{R}^n)$ and $\dot B_k^{\vec p, \vec q,\lambda}(\mathbb R^n)$}\label{sec3}

This section is devoted to investigating the boundedness of weighted Hardy--Littlewood averages on mixed-norm radial-angular Dunkl central Morrey space $\dot{M}_{\vec{p},\vec{q},k}^r(\mathbb{R}^n)$ and mixed-norm radial-angular Dunkl $\lambda$-central Morrey space $\dot{B}^{\vec p, \vec q,\lambda}_k(\mathbb{R}^n)$.
\begin{theorem}\label{th1}
Let $0<r<\infty$, $1<\vec p, \vec q<\infty$, and
$\sum\limits_{i=1}^m\frac{N_{k^{(i)}}}{p_i}\geqslant \frac{N_k}{r}.$
Then $\mathcal{H}_{\varphi}$ is bounded on $\dot{M}_{\vec p, \vec q, k}^r(\mathbb{R}^n)$ if and only if
\begin{equation}
\int_0^1 t^{-\frac{N_k}{r}}\varphi(t)\,\mathrm{d}t<\infty.
\end{equation}
Moreover,
\[
\|\mathcal{H}_{\varphi}\|_{\dot{M}_{\vec p, \vec q, k}^r(\mathbb{R}^n)\to
\dot{M}_{\vec p, \vec q, k}^r(\mathbb{R}^n)}
=\int_0^1 t^{-\frac{N_k}{r}}\varphi(t)\,\mathrm{d}t.
\]
\end{theorem}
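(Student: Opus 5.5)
The plan is to prove the upper bound with Minkowski's integral inequality combined with the dilation identity \eqref{eq:dilation}, and the lower bound by testing on the power function of Example~\ref{ex:radial_decay}, suitably truncated. Write $A=\int_0^1 t^{-N_k/r}\varphi(t)\,\mathrm{d}t$.

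\textbf{Sufficiency.} Fix $f\in\dot{M}_{\vec p,\vec q,k}^r(\mathbb{R}^n)$. The space is a Banach function space whose norm is built from iterated $L^{p_i},L^{q_i}$ norms with $p_i,q_i>1$ (Remark~\ref{rem:banach_space}). Hence Minkowski's integral inequality applies to the vector-valued integral $\int_0^1 f(t\cdot)\varphi(t)\,\mathrm{d}t$. Applying it inside each $\|\cdot\,\chi_{B(0,R)}\|_{L^{\vec p}_{\mathrm{rad},k}L^{\vec q}_{\mathrm{ang},k}}$ and then taking the supremum in $R$ gives
\[
\|\mathcal{H}_\varphi f\|_{\dot{M}_{\vec p,\vec q,k}^r}\le\int_0^1\|f(t\cdot)\|_{\dot{M}_{\vec p,\vec q,k}^r}\varphi(t)\,\mathrm{d}t .
\]
By \eqref{eq:dilation}, $\|f(t\cdot)\|_{\dot{M}_{\vec p,\vec q,k}^r}=t^{-N_k/r}\|f\|_{\dot{M}_{\vec p,\vec q,k}^r}$, so the operator norm is at most $A$. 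I would double-check \eqref{eq:dilation} once: the substitution $\rho_i\mapsto\rho_i/t$ turns $B(0,R)$ into $B(0,tR)$ and produces the factor $t^{-\sum_i N_{k^{(i)}}/p_i}$. The normalising power of $|B(0,R)|_k$ supplies $t^{\sum_i N_{k^{(i)}}/p_i-N_k/r}$, and the product of the two is $t^{-N_k/r}$.

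\textbf{Necessity and sharpness.} Here I use the test function of Example~\ref{ex:radial_decay}, $f_0(x)=\prod_{i=1}^m\rho_i^{-\alpha_i}$, with $\sum_i\alpha_i=N_k/r$ and $0<\alpha_i<N_{k^{(i)}}/p_i$. Such $\alpha_i$ exist because the hypothesis gives $\sum_iN_{k^{(i)}}/p_i\ge N_k/r$. In the boundary case of equality, admissible $\alpha_i$ exist only in the limit, so there I would perturb the exponents by $\varepsilon$ (next paragraph). Since $f_0(tx)=t^{-N_k/r}f_0(x)$, we get $\mathcal{H}_\varphi f_0=A\,f_0$ formally. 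Thus if $A<\infty$ the norm equals $A$, and if $A=\infty$ then $\mathcal{H}_\varphi f_0$ is identically infinite and boundedness fails.

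\textbf{Main obstacle: the boundary case.} When equality holds in the exponent condition, $f_0$ is not in the space. I would then use truncations. Take $f_\varepsilon=\prod_i\rho_i^{-\alpha_i^\varepsilon}\chi_{\{|x|\ge1\}}$ (or the analogous cut-off), where $\sum_i\alpha_i^\varepsilon=N_k/r+\varepsilon$ and $\alpha_i^\varepsilon<N_{k^{(i)}}/p_i$ is arranged so that integrability near the origin is no longer needed; alternatively cut off near $0$ and near $\infty$. Then I estimate $\mathcal{H}_\varphi f_\varepsilon\ge\big(\int_{\varepsilon'}^1 t^{-N_k/r-\varepsilon}\varphi(t)\,\mathrm{d}t\big)f_\varepsilon$ on a region where the cut-off is inactive, for instance $|x|\ge1/\varepsilon'$. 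Finally I check that the Morrey norm of $f_\varepsilon$ restricted to that region is comparable to $\|f_\varepsilon\|$ up to a factor tending to $1$ as $\varepsilon\to0$. Letting $\varepsilon,\varepsilon'\to0$ and applying monotone convergence yields $\|\mathcal{H}_\varphi\|\ge A$. This norm comparison is the step I expect to require the most care.
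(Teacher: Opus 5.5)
Your sufficiency argument and your treatment of the strict case $\sum_i N_{k^{(i)}}/p_i>N_k/r$ are the paper's: Minkowski plus dilation, and the homogeneous eigenfunction $\prod_i\rho_i^{-\alpha_i}$, which is the paper's $f_0$ with $\alpha_i=N_{k^{(i)}}/s_i$, $s_i>p_i$.

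The gap is in the equality case $\sum_i N_{k^{(i)}}/p_i=N_k/r$, where the test function you propose cannot work.

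\textbf{Why your test function fails.} Your two constraints $\sum_i\alpha_i^\varepsilon=N_k/r+\varepsilon$ and $\alpha_i^\varepsilon<N_{k^{(i)}}/p_i$ for all $i$ contradict each other, since summing the second gives $\sum_i\alpha_i^\varepsilon<N_k/r$. More fundamentally, for $m\ge2$ the cut-off $\chi_{\{|x|\ge1\}}$ does not keep the individual radii away from $0$. Whenever $\rho_2\ge1$, the condition $|x|\ge1$ holds for every $\rho_1>0$, so the innermost radial integral is $\int_0^\infty\rho_1^{-\alpha_1p_1+N_{k^{(1)}}-1}\,\mathrm{d}\rho_1=\infty$ for every $\alpha_1$. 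Each factor $\rho_i^{-\alpha_i}$ would have to be integrable both as $\rho_i\to0$ (forcing $\alpha_i<N_{k^{(i)}}/p_i$) and as $\rho_i\to\infty$ (forcing $\alpha_i>N_{k^{(i)}}/p_i$). Hence $\prod_i\rho_i^{-\alpha_i}\chi_{\{|x|\ge1\}}$ has infinite norm in $\dot{M}^r_{\vec p,\vec q,k}(\mathbb{R}^n)$, and no perturbation of the exponents repairs this. In this case that space is just $L^{\vec p}_{\mathrm{rad},k}L^{\vec q}_{\mathrm{ang},k}(\mathbb{R}^n)$, because the normalising exponent vanishes.

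\textbf{The fix.} Cut off in each $\rho_i$ separately, as the paper does with $f_\varepsilon=\prod_i\rho_i^{-N_{k^{(i)}}/p_i-\varepsilon}\chi_{\{\rho_i>1\}}$.
\begin{itemize}
\item The norm factorises into one-dimensional integrals: $\|f_\varepsilon\|=\prod_i\omega_{n_i,k^{(i)}}^{1/q_i}(p_i\varepsilon)^{-1/p_i}$.
\item On the product region $E_\varepsilon=\{\rho_i>1/\varepsilon\ \forall i\}$ (not $\{|x|\ge1/\varepsilon\}$) one has $\mathcal{H}_\varphi f_\varepsilon\ge f_\varepsilon\int_\varepsilon^1t^{-N_k/r-m\varepsilon}\varphi(t)\,\mathrm{d}t$.
\item Exactly, $\|f_\varepsilon\chi_{E_\varepsilon}\|=\varepsilon^{m\varepsilon}\|f_\varepsilon\|$.
\end{itemize}
So the norm comparison you expected to be delicate is an explicit computation with ratio $\varepsilon^{m\varepsilon}\to1$. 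Since $t^{-m\varepsilon}\ge1$ on $(0,1]$, monotone convergence then gives $\|\mathcal{H}_\varphi\|\ge A$, including when $A=\infty$.

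Your alternative of cutting off near $0$ and near $\infty$ also works, provided it is done coordinatewise and with the exact exponents. Take $g=\prod_i\rho_i^{-N_{k^{(i)}}/p_i}\chi_{\{1<\rho_i<M\}}$. Then $\mathcal{H}_\varphi g\ge g\int_\delta^1t^{-N_k/r}\varphi(t)\,\mathrm{d}t$ on $\{1/\delta<\rho_i<M\ \forall i\}$. The ratio of the norm of $g$ on that region to $\|g\|$ is $\prod_i\bigl(\log(M\delta)/\log M\bigr)^{1/p_i}\to1$ as $M\to\infty$.
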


\begin{theorem}\label{2}
Suppose $1<\vec p, \vec q<\infty$ and $\lambda>-\frac1{N_k}\sum_{i=1}^{m}\frac{N_{k^{(i)}}}{p_i}.$
Then $\mathcal{H}_{\varphi}$ is bounded on $\dot B_k^{\vec p, \vec q,\lambda}(\mathbb R^n)$ if and only if
\begin{equation}
\int_0^1 t^{N_k\lambda}\varphi(t)\,\mathrm{d}t<\infty.\label{3.2}
\end{equation}
Furthermore,
\[
\left\|\mathcal{H}_{\varphi}\right\|_{\dot B_k^{\vec p, \vec q,\lambda}(\mathbb R^n)
\to \dot B_k^{\vec p, \vec q,\lambda}(\mathbb R^n)}
=
\int_0^1 t^{N_k\lambda}\varphi(t)\,\mathrm{d}t .
\]
\end{theorem}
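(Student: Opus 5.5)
The plan is to prove the two inequalities for the operator norm separately: an upper bound from Minkowski's inequality combined with a dilation identity, and a lower bound from a power-type extremal function.

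\emph{Sufficiency and the upper bound.} For $t\in(0,1]$ and $R>0$ we have $(f(t\cdot))\chi_{B(0,R)}=(f\chi_{B(0,tR)})(t\cdot)$. The radial change of variables $\rho_i=\tau_i/t$ used in the scaling remark gives the dilation identity
\[
\|g(t\cdot)\|_{L^{\vec p}_{\mathrm{rad},k}L^{\vec q}_{\mathrm{ang},k}}=t^{-\sum_{i=1}^m N_{k^{(i)}}/p_i}\,\|g\|_{L^{\vec p}_{\mathrm{rad},k}L^{\vec q}_{\mathrm{ang},k}}.
\]
Since $\vec p,\vec q>1$, the iterated mixed norm is a Banach function norm, so Minkowski's integral inequality applies:
\[
\|\mathcal H_\varphi f\,\chi_{B(0,R)}\|\le\int_0^1 t^{-\sum N_{k^{(i)}}/p_i}\,\|f\chi_{B(0,tR)}\|\,\varphi(t)\,\mathrm dt .
\]
Next we bound $\|f\chi_{B(0,tR)}\|$ by $|B(0,tR)|_k^{\lambda}\|\chi_{B(0,tR)}\|\,\|f\|_{\dot B}$. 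The second remark gives $\|\chi_{B(0,tR)}\|=t^{\sum N_{k^{(i)}}/p_i}\|\chi_{B(0,R)}\|$, and we also have $|B(0,tR)|_k=t^{N_k}|B(0,R)|_k$. Substituting these, the powers $t^{\mp\sum N_{k^{(i)}}/p_i}$ cancel. Dividing by $|B(0,R)|_k^\lambda\|\chi_{B(0,R)}\|$ and taking the supremum over $R$ then yields
\[
\|\mathcal H_\varphi f\|_{\dot B_k^{\vec p,\vec q,\lambda}}\le\Big(\int_0^1 t^{N_k\lambda}\varphi(t)\,\mathrm dt\Big)\|f\|_{\dot B_k^{\vec p,\vec q,\lambda}} .
\]

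\emph{Necessity and the lower bound.} We test on $f_0(x)=|x|^{N_k\lambda}$, i.e.\ $(\sum\rho_i^2)^{N_k\lambda/2}$. Since $f_0$ is radial-homogeneous of degree $N_k\lambda$, the same dilation gives $\|f_0\chi_{B(0,R)}\|=R^{N_k\lambda+\sum N_{k^{(i)}}/p_i}\|f_0\chi_{B(0,1)}\|$. Hence the ratio in the definition of the norm is independent of $R$, and $\|f_0\|_{\dot B}=\nu_{n,k}^{-\lambda}\|f_0\chi_{B(0,1)}\|/\|\chi_{B(0,1)}\|$.

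The main obstacle is showing that this quantity is finite and nonzero. Near the origin we need integrability of $|x|^{N_k\lambda p}$ in the iterated norm. On the region $\rho_i\approx|x|$ this requires essentially $N_k\lambda+\sum_i N_{k^{(i)}}/p_i>0$, which is exactly the hypothesis on $\lambda$. I would verify this by bounding $|x|^{N_k\lambda}$ on dyadic shells $B(0,2^{-j})\setminus B(0,2^{-j-1})$, where it is comparable to $2^{-jN_k\lambda}$. Using the scaling remark, the norm of the $j$-th shell piece is $\lesssim 2^{-j(N_k\lambda+\sum N_{k^{(i)}}/p_i)}$. This gives a convergent geometric series by the triangle inequality.

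Finally, $\mathcal H_\varphi f_0=\big(\int_0^1 t^{N_k\lambda}\varphi(t)\,\mathrm dt\big)f_0$ exactly, so the ratio $\|\mathcal H_\varphi f_0\|/\|f_0\|$ equals the integral. If $\mathcal H_\varphi$ is bounded, the integral is therefore finite, which proves \eqref{3.2}. It is also at most the operator norm, which gives equality. If one prefers to avoid the issue of the integral being infinite, one can truncate with $f_\varepsilon=|x|^{N_k\lambda}\chi_{\{|x|>\varepsilon\}}$ and let $\varepsilon\to0$ by monotone convergence, though the direct homogeneity argument should already suffice.
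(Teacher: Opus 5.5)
Your proposal is correct. The upper bound is exactly the paper's argument: Minkowski's inequality, the dilation identity $\|f(t\cdot)\chi_{B(0,R)}\|=t^{-\sum_i N_{k^{(i)}}/p_i}\|f\chi_{B(0,tR)}\|$, and the scalings of $|B(0,tR)|_k$ and $\|\chi_{B(0,tR)}\|$.

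The lower bound has the same structure as the paper's, namely an eigenfunction of $\mathcal{H}_\varphi$ that is homogeneous of degree $N_k\lambda$, but you use a different test function. The paper takes $f_0=\prod_{i=1}^m\rho_i^{\lambda_i}$ with $\lambda_i>-N_{k^{(i)}}/p_i$ and $\sum_i\lambda_i=N_k\lambda$; such $\lambda_i$ exist precisely because of the hypothesis on $\lambda$. Because $f_0$ is a product and $B(0,1)$ sits inside $\{\rho_i<1 \text{ for all } i\}$, the iterated norm factorizes, and finiteness reduces to the elementary integrals $\int_0^1\rho_i^{\lambda_ip_i+N_{k^{(i)}}-1}\,\mathrm{d}\rho_i$.

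You take $|x|^{N_k\lambda}$ instead. This avoids splitting the exponent, but the norm no longer factorizes, so you need the dyadic-shell estimate. There the hypothesis shows up as convergence of $\sum_j 2^{-j(N_k\lambda+\sum_i N_{k^{(i)}}/p_i)}$. Summing infinitely many shells uses countable subadditivity of the mixed norm. That is, you need the triangle inequality together with monotone convergence (the Fatou property) of the iterated integrals, not the triangle inequality alone. This is harmless here, since $\vec p,\vec q>1$ and all the pieces are nonnegative.

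What your route buys is that it uses only the lattice property and the scaling of $\|\chi_{B(0,r)}\|$, so it would carry over unchanged to any norm with the same dilation behaviour. The paper's route is more hands-on: it relies on the product structure of the radial-angular norm, but in exchange membership of $f_0$ reduces to one-variable integrals.
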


\begin{proof}[Proof of Theorem 3.1]
For any ball $B(0,R)$, $R>0$, by Minkowski's inequality, we have
\begin{align*}
&|B(0,R)|_k^{\frac1r-\frac1{N_k}\left(\sum\limits_{i=1}^m\frac{N_{k^{(i)}}}{p_i}\right)}\|\mathcal{H}_{\varphi}f\chi_{B(0,R)}\|_{L_{\mathrm{rad}, k}^{\vec p} L_{\mathrm{ang}, k}^{\vec q}(\mathbb{R}^n)}\\
&\leqslant\int_0^1\varphi(t)|B(0,R)|_k^{\frac1r-\frac1{N_k}\left(\sum\limits_{i=1}^m\frac{N_{k^{(i)}}}{p_i}\right)}\|f(t\cdot)\chi_{B(0,R)}(\cdot)\|_{L_{\mathrm{rad}, k}^{\vec p} L_{\mathrm{ang}, k}^{\vec q}(\mathbb{R}^n)}\,\mathrm{d}t\\
&=\int_0^1\varphi(t)|B(0,R)|_k^{\frac1r-\frac1{N_k}\left(\sum\limits_{i=1}^m\frac{N_{k^{(i)}}}{p_i}\right)}t^{-\sum\limits_{i=1}^m\frac{N_{k^{(i)}}}{p_i}}\|f\chi_{B(0,tR)}\|_{L_{\mathrm{rad}, k}^{\vec p} L_{\mathrm{ang}, k}^{\vec q}(\mathbb{R}^n)}\,\mathrm{d}t\\
&=\int_0^1t^{-\frac{N_k}{r}}\varphi(t)|B(0,tR)|_k^{\frac1r-\frac1{N_k}\left(\sum\limits_{i=1}^m\frac{N_{k^{(i)}}}{p_i}\right)}\|f\chi_{B(0,tR)}\|_{L_{\mathrm{rad}, k}^{\vec p} L_{\mathrm{ang}, k}^{\vec q}(\mathbb{R}^n)}\,\mathrm{d}t\\
&\leqslant\|f\|_{\dot{M}_{\vec p, \vec q, k}^r(\mathbb{R}^n)}\int_0^1 t^{-\frac{N_k}{r}}\varphi(t)\,\mathrm{d}t.
\end{align*}
Taking the supremum over all such balls $B(0,R)$ with $R>0$, we obtain
\begin{equation*}
\|\mathcal{H}_{\varphi}f\|_{\dot{M}_{\vec p, \vec q, k}^r(\mathbb{R}^n)}
\leqslant \int_0^1 t^{-\frac{N_k}{r}}\varphi(t)\,\mathrm{d}t\|f\|_{\dot{M}_{\vec p, \vec q, k}^r(\mathbb{R}^n)}.
\end{equation*}

Next we prove that the constant $\int_0^1 t^{-\frac{N_k}{r}}\varphi(t)\,\mathrm{d}t$ is sharp. When
\[
\sum_{i=1}^m\frac{N_{k^{(i)}}}{p_i}>\frac{N_k}{r},
\]
we choose a purely radial function
\[
f_0(x) = f_0(\rho_1\omega_1, \dots, \rho_m\omega_m)
=
\prod_{i=1}^m \rho_i^{-\frac{N_{k^{(i)}}}{s_i}},
\]
where $p_i<s_i<\infty$ and $\sum\limits_{i=1}^m\frac{N_{k^{(i)}}}{s_i}=\frac{N_k}{r}$.
Such $s_i$ exist since $\sum_{i=1}^m\frac{N_{k^{(i)}}}{p_i}>\frac{N_k}{r}$.
Moreover, since $p_i<s_i$, we have
\[
-\frac{N_{k^{(i)}} p_i}{s_i} + N_{k^{(i)}} - 1 > -1,
\]
and hence
\[
\int_{0}^{1}
\rho_i^{-\frac{N_{k^{(i)}} p_i}{s_i}}
\rho_i^{N_{k^{(i)}}-1}\,\mathrm{d}\rho_i
<\infty.
\]
Thus
\[
\|f_0\chi_{B(0,1)}\|_{L_{\mathrm{rad}, k}^{\vec p} L_{\mathrm{ang}, k}^{\vec q}(\mathbb{R}^n)}<\infty.
\]
By making the changes of variables \(\rho_i=R\tau_i\), \(i=1,\ldots,m\), we obtain
\begin{align*}
&\left\|f_0\chi_{B(0,R)}\right\|_{L_{\mathrm{rad},k}^{\vec p}L_{\mathrm{ang},k}^{\vec q}(\mathbb R^n)}\\
&=\left\|\prod_{i=1}^m\rho_i^{-\frac{N_{k^{(i)}}}{s_i}}\chi_{\left\{\sum\limits_{i=1}^m\rho_i^2<R^2\right\}}\right\|_{L_{\mathrm{rad},k}^{\vec p}L_{\mathrm{ang},k}^{\vec q}(\mathbb R^n)}\\
&=R^{-\sum\limits_{i=1}^m\frac{N_{k^{(i)}}}{s_i}}R^{\sum\limits_{i=1}^m\frac{N_{k^{(i)}}}{p_i}}\left\|\prod_{i=1}^m\tau_i^{-\frac{N_{k^{(i)}}}{s_i}}\chi_{\left\{\sum\limits_{i=1}^m\tau_i^2<1\right\}}\right\|_{L_{\mathrm{rad},k}^{\vec p}L_{\mathrm{ang},k}^{\vec q}(\mathbb R^n)}\\
&=R^{\sum\limits_{i=1}^m\frac{N_{k^{(i)}}}{p_i}-\sum\limits_{i=1}^m\frac{N_{k^{(i)}}}{s_i}}\left\|f_0\chi_{B(0,1)}\right\|_{L_{\mathrm{rad},k}^{\vec p}
L_{\mathrm{ang},k}^{\vec q}(\mathbb R^n)}\\
&=R^{\sum\limits_{i=1}^m\frac{N_{k^{(i)}}}{p_i}-\frac{N_k}{r}}\left\|f_0\chi_{B(0,1)}\right\|_{L_{\mathrm{rad},k}^{\vec p}L_{\mathrm{ang},k}^{\vec q}(\mathbb R^n)},
\end{align*}
where we have used
$\sum\limits_{i=1}^m\frac{N_{k^{(i)}}}{s_i}=\frac{N_k}{r}$. Therefore,
\begin{align*}
&|B(0,R)|_k^{\frac1r-\frac1{N_k}\sum\limits_{i=1}^m\frac{N_{k^{(i)}}}{p_i}}\left\|f_0\chi_{B(0,R)}\right\|_{L_{\mathrm{rad},k}^{\vec p}L_{\mathrm{ang},k}^{\vec q}(\mathbb R^n)}\\
&=\left(\nu_{n,k}R^{N_k}\right)^{\frac1r-\frac1{N_k}\sum\limits_{i=1}^m\frac{N_{k^{(i)}}}{p_i}}R^{\sum\limits_{i=1}^m\frac{N_{k^{(i)}}}{p_i}-\frac{N_k}{r}}\left\|f_0\chi_{B(0,1)}\right\|_{L_{\mathrm{rad},k}^{\vec p}L_{\mathrm{ang},k}^{\vec q}(\mathbb R^n)}\\
&=\nu_{n,k}^{\frac1r-\frac1{N_k}\sum\limits_{i=1}^m\frac{N_{k^{(i)}}}{p_i}}R^{\frac{N_k}{r}-\sum\limits_{i=1}^m\frac{N_{k^{(i)}}}{p_i}+\sum\limits_{i=1}^m\frac{N_{k^{(i)}}}{p_i}
-\frac{N_k}{r}}\left\|f_0\chi_{B(0,1)}\right\|_{L_{\mathrm{rad},k}^{\vec p}L_{\mathrm{ang},k}^{\vec q}(\mathbb R^n)}\\
&=\nu_{n,k}^{\frac1r-\frac1{N_k}\sum\limits_{i=1}^m\frac{N_{k^{(i)}}}{p_i}}
\left\|f_0\chi_{B(0,1)}\right\|_{L_{\mathrm{rad},k}^{\vec p}L_{\mathrm{ang},k}^{\vec q}(\mathbb R^n)}.
\end{align*}

Hence
\[
f_0\in \dot{M}_{\vec p, \vec q, k}^r(\mathbb{R}^n).
\]
By a direct computation,
\begin{align*}
\mathcal{H}_{\varphi}f_0(x)
&=\int_0^1\prod_{i=1}^m (t\rho_i)^{-\frac{N_{k^{(i)}}}{s_i}}\varphi(t)\,\mathrm{d}t\\
&=\prod_{i=1}^m \rho_i^{-\frac{N_{k^{(i)}}}{s_i}}\int_0^1t^{-\sum_{i=1}^m\frac{N_{k^{(i)}}}{s_i}}\varphi(t)\,\mathrm{d}t\\
&=f_0(x)\int_0^1 t^{-\frac{N_k}{r}}\varphi(t)\,\mathrm{d}t.
\end{align*}
As a consequence,
\[
\|\mathcal{H}_{\varphi}f_0\|_{\dot{M}_{\vec p, \vec q, k}^r(\mathbb{R}^n)}
=\int_0^1 t^{-\frac{N_k}{r}}\varphi(t)\,\mathrm{d}t\|f_0\|_{\dot{M}_{\vec p, \vec q, k}^r(\mathbb{R}^n)}.
\]
Thus
\[
\|\mathcal{H}_{\varphi}\|_{\dot{M}_{\vec p, \vec q, k}^r(\mathbb{R}^n)\to
\dot{M}_{\vec p, \vec q, k}^r(\mathbb{R}^n)}
\geqslant \int_0^1 t^{-\frac{N_k}{r}}\varphi(t)\,\mathrm{d}t.
\]

When
\[
\sum_{i=1}^m\frac{N_{k^{(i)}}}{p_i}=\frac{N_k}{r},
\]
then $\dot{M}_{\vec p, \vec q, k}^r(\mathbb{R}^n)$ is just the mixed-norm radial-angular Dunkl Lebesgue space
$L_{\mathrm{rad}, k}^{\vec p} L_{\mathrm{ang}, k}^{\vec q}(\mathbb{R}^n)$.
In this case we take the purely radial function
\[
f_{\varepsilon}(x) = f_{\varepsilon}(\rho_1\omega_1, \dots, \rho_m\omega_m)
=\prod_{i=1}^m\rho_i^{-\frac{N_{k^{(i)}}}{p_i}-\varepsilon}\chi_{\{\rho_i>1\}},
\]
where $\varepsilon>0$ is sufficiently small.

By a routine calculation, we get
\begin{align*}
\|f_{\varepsilon}\|_{L_{\mathrm{rad}, k}^{\vec p} L_{\mathrm{ang}, k}^{\vec q}(\mathbb{R}^n)}
&=\left( \int_{\mathbb{S}^{n_m-1}} \cdots \left( \int_{\mathbb{S}^{n_1-1}} 1 \,\mathrm{d}\sigma_{k^{(1)}}(\omega_1) \right)^{\frac{q_2}{q_1}} \cdots \,\mathrm{d}\sigma_{k^{(m)}}(\omega_m) \right)^{\frac{1}{q_m}}\\
&\quad\times\prod_{i=1}^m\left(\int_{\rho_i>1}\rho_i^{-\left(\frac{N_{k^{(i)}}}{p_i}+\varepsilon\right)p_i}\rho_i^{N_{k^{(i)}}-1}\,\mathrm{d}\rho_i\right)^{\frac1{p_i}}\\
&=\left( \prod_{i=1}^m \omega_{n_i, k^{(i)}}^{\frac{1}{q_i}} \right)\prod_{i=1}^m\left(\int_{1}^{\infty}\rho_i^{-1-p_i\varepsilon}\,\mathrm{d}\rho_i\right)^{\frac1{p_i}}\\
&=\left( \prod_{i=1}^m \omega_{n_i, k^{(i)}}^{\frac{1}{q_i}} \right)\prod_{i=1}^m(p_i\varepsilon)^{-\frac1{p_i}}.
\end{align*}
Inserting $f_{\varepsilon}$ into $\mathcal{H}_{\varphi}f$, we get
\begin{align*}
\mathcal{H}_{\varphi}f_{\varepsilon}(x)
&=\prod_{i=1}^m\rho_i^{-\frac{N_{k^{(i)}}}{p_i}-\varepsilon}
\int_{\max\limits_{1\leq v\leq m}\left\{\frac1{\rho_v}\right\}}^{1}
t^{-\sum\limits_{v=1}^m\frac{N_{k^{(v)}}}{p_v}-m\varepsilon}
\varphi(t)\,\mathrm{d}t.
\end{align*}
Therefore, for sufficiently small $\varepsilon>0$, we have
\begin{align*}
&\|\mathcal{H}_{\varphi}f_{\varepsilon}\|_{L_{\mathrm{rad}, k}^{\vec p} L_{\mathrm{ang}, k}^{\vec q}(\mathbb{R}^n)}\\
&\geqslant\int_{\varepsilon}^{1}t^{-\sum_{i=1}^m\frac{N_{k^{(i)}}}{p_i}-m\varepsilon}\varphi(t)\,\mathrm{d}t
\left\|\prod_{i=1}^m\rho_i^{-\frac{N_{k^{(i)}}}{p_i}-\varepsilon}\chi_{\{\rho_i>\frac1\varepsilon\}}\right\|_{L_{\mathrm{rad}, k}^{\vec p} L_{\mathrm{ang}, k}^{\vec q}(\mathbb{R}^n)}\\
&=\int_{\varepsilon}^{1}t^{-\sum_{i=1}^m\frac{N_{k^{(i)}}}{p_i}-m\varepsilon}\varphi(t)\,\mathrm{d}t
\left( \prod_{i=1}^m \omega_{n_i, k^{(i)}}^{\frac{1}{q_i}} \right) \prod_{i=1}^m
\left(\int_{\rho_i>\frac1\varepsilon}\rho_i^{-\left(\frac{N_{k^{(i)}}}{p_i}+\varepsilon\right)p_i}\rho_i^{N_{k^{(i)}}-1}\,\mathrm{d}\rho_i\right)^{\frac1{p_i}}\\
&=\int_{\varepsilon}^{1}t^{-\sum_{i=1}^m\frac{N_{k^{(i)}}}{p_i}-m\varepsilon}\varphi(t)\,\mathrm{d}t
\left( \prod_{i=1}^m \omega_{n_i, k^{(i)}}^{\frac{1}{q_i}} \right)\prod_{i=1}^m\left(\int_{\rho_i>\frac1\varepsilon}\rho_i^{-1-p_i\varepsilon}\,\mathrm{d}\rho_i\right)^{\frac1{p_i}}\\
&=\varepsilon^{m\varepsilon}\left( \prod_{i=1}^m \omega_{n_i, k^{(i)}}^{\frac{1}{q_i}} \right)\prod_{i=1}^m(p_i\varepsilon)^{-\frac1{p_i}}
\int_{\varepsilon}^{1}t^{-\sum_{i=1}^m\frac{N_{k^{(i)}}}{p_i}-m\varepsilon}\varphi(t)\,\mathrm{d}t\\
&=\varepsilon^{m\varepsilon}\|f_{\varepsilon}\|_{L_{\mathrm{rad}, k}^{\vec p} L_{\mathrm{ang}, k}^{\vec q}(\mathbb{R}^n)}\int_{\varepsilon}^{1}t^{-\sum_{i=1}^m\frac{N_{k^{(i)}}}{p_i}-m\varepsilon}\varphi(t)\,\mathrm{d}t.
\end{align*}
Consequently,
\[
\frac{
\|\mathcal{H}_{\varphi}f_{\varepsilon}\|_{L_{\mathrm{rad}, k}^{\vec p} L_{\mathrm{ang}, k}^{\vec q}(\mathbb{R}^n)}
}{
\|f_{\varepsilon}\|_{L_{\mathrm{rad}, k}^{\vec p} L_{\mathrm{ang}, k}^{\vec q}(\mathbb{R}^n)}
}
\geqslant
\varepsilon^{m\varepsilon}
\int_{\varepsilon}^{1}
t^{-\sum_{i=1}^m\frac{N_{k^{(i)}}}{p_i}-m\varepsilon}
\varphi(t)\,\mathrm{d}t.
\]
Letting $\varepsilon\to0^+$, and using $\lim\limits_{\varepsilon\to0^+}\varepsilon^{m\varepsilon}=1$, we obtain
\[
\|\mathcal{H}_{\varphi}\|_{L_{\mathrm{rad}, k}^{\vec p} L_{\mathrm{ang}, k}^{\vec q}(\mathbb{R}^n)\to L_{\mathrm{rad}, k}^{\vec p} L_{\mathrm{ang}, k}^{\vec q}(\mathbb{R}^n)}
\geqslant\int_0^1t^{-\sum\limits_{i=1}^m\frac{N_{k^{(i)}}}{p_i}}\varphi(t)\,\mathrm{d}t
=\int_0^1 t^{-\frac{N_k}{r}}\varphi(t)\,\mathrm{d}t.
\]
Since
\[
\dot{M}_{\vec p, \vec q, k}^r(\mathbb{R}^n)=L_{\mathrm{rad}, k}^{\vec p} L_{\mathrm{ang}, k}^{\vec q}(\mathbb{R}^n)
\]
in the case $\sum\limits_{i=1}^m\frac{N_{k^{(i)}}}{p_i}=\frac{N_k}{r}$, we also have
\[
\|\mathcal{H}_{\varphi}\|_{\dot{M}_{\vec p, \vec q, k}^r(\mathbb{R}^n)\to
\dot{M}_{\vec p, \vec q, k}^r(\mathbb{R}^n)}
\geqslant \int_0^1 t^{-\frac{N_k}{r}}\varphi(t)\,\mathrm{d}t.
\]

Combining the above two cases, we obtain
\[
\|\mathcal{H}_{\varphi}\|_{\dot{M}_{\vec p, \vec q, k}^r(\mathbb{R}^n)\to
\dot{M}_{\vec p, \vec q, k}^r(\mathbb{R}^n)}
=\int_0^1 t^{-\frac{N_k}{r}}\varphi(t)\,\mathrm{d}t.
\]
The proof of Theorem \ref{th1} is completed.
\end{proof}

\begin{proof}[Proof of Theorem 3.2]
Suppose that (\ref{3.2}) holds. For $R>0$, using Minkowski's inequality and the changes of variables $\tau_i=t\rho_i$, we have
\begin{align*}
\frac{\left\|\mathcal{H}_{\varphi} f\,\chi_{B(0,R)}\right\|_{L_{\mathrm{rad}, k}^{\vec p} L_{\mathrm{ang}, k}^{\vec q}(\mathbb R^n)}}{
|B(0,R)|_k^\lambda\left\|\chi_{B(0,R)}\right\|_{L_{\mathrm{rad}, k}^{\vec p} L_{\mathrm{ang}, k}^{\vec q}(\mathbb R^n)}}
\leq\int_0^1 \varphi(t)\frac{\left\|f(t\cdot)\chi_{B(0,R)}\right\|_{L_{\mathrm{rad}, k}^{\vec p} L_{\mathrm{ang}, k}^{\vec q}(\mathbb R^n)}
}{|B(0,R)|_k^\lambda\left\|\chi_{B(0,R)}\right\|_{L_{\mathrm{rad}, k}^{\vec p} L_{\mathrm{ang}, k}^{\vec q}(\mathbb R^n)}}\,\mathrm{d}t .
\end{align*}
Since
\[
\left\|f(t\cdot)\chi_{B(0,R)}\right\|_{L_{\mathrm{rad}, k}^{\vec p} L_{\mathrm{ang}, k}^{\vec q}(\mathbb R^n)}
=t^{-\sum\limits_{i=1}^m\frac{N_{k^{(i)}}}{p_i}}\left\|f\,\chi_{B(0,tR)}\right\|_{L_{\mathrm{rad}, k}^{\vec p} L_{\mathrm{ang}, k}^{\vec q}(\mathbb R^n)},
\]
and
\[
|B(0,tR)|_k=t^{N_k}|B(0,R)|_k,
\qquad\left\|\chi_{B(0,tR)}\right\|_{L_{\mathrm{rad}, k}^{\vec p} L_{\mathrm{ang}, k}^{\vec q}(\mathbb R^n)}
=t^{\sum\limits_{i=1}^m\frac{N_{k^{(i)}}}{p_i}}\left\|\chi_{B(0,R)}\right\|_{L_{\mathrm{rad}, k}^{\vec p} L_{\mathrm{ang}, k}^{\vec q}(\mathbb R^n)},
\]
we obtain
\begin{align*}
&\frac{\left\|\mathcal{H}_{\varphi} f\,\chi_{B(0,R)}\right\|_{L_{\mathrm{rad}, k}^{\vec p} L_{\mathrm{ang}, k}^{\vec q}(\mathbb R^n)}
}{|B(0,R)|_k^\lambda\left\|\chi_{B(0,R)}\right\|_{L_{\mathrm{rad}, k}^{\vec p} L_{\mathrm{ang}, k}^{\vec q}(\mathbb R^n)}} \\
&\leq\int_0^1 t^{N_k\lambda}\varphi(t)\frac{\left\|f\,\chi_{B(0,tR)}\right\|_{L_{\mathrm{rad}, k}^{\vec p} L_{\mathrm{ang}, k}^{\vec q}(\mathbb R^n)}}{
|B(0,tR)|_k^\lambda\left\|\chi_{B(0,tR)}\right\|_{L_{\mathrm{rad}, k}^{\vec p} L_{\mathrm{ang}, k}^{\vec q}(\mathbb R^n)}}\,\mathrm{d}t \\
&\leq\left\|f\right\|_{\dot B_k^{\vec p, \vec q,\lambda}(\mathbb R^n)}\int_0^1 t^{N_k\lambda}\varphi(t)\,\mathrm{d}t .
\end{align*}
Taking the supremum over all $R>0$, we get
\[
\left\|\mathcal{H}_{\varphi} f\right\|_{\dot B_k^{\vec p, \vec q,\lambda}(\mathbb R^n)}
\leq
\left\|f\right\|_{\dot B_k^{\vec p, \vec q,\lambda}(\mathbb R^n)}
\int_0^1 t^{N_k\lambda}\varphi(t)\,\mathrm{d}t .
\]
Hence, $\mathcal{H}_{\varphi}$ is bounded on
$\dot B_k^{\vec p, \vec q,\lambda}(\mathbb R^n)$ and
\begin{equation*}
\left\|\mathcal{H}_{\varphi}\right\|_{\dot B_k^{\vec p, \vec q,\lambda}(\mathbb R^n)
\to \dot B_k^{\vec p, \vec q,\lambda}(\mathbb R^n)}
\leq
\int_0^1 t^{N_k\lambda}\varphi(t)\,\mathrm{d}t .
\end{equation*}

Conversely, suppose that $\mathcal{H}_{\varphi}$ is bounded on
$\dot B_k^{\vec p, \vec q,\lambda}(\mathbb R^n)$. Since
$\lambda>-\frac1{N_k}\sum\limits_{i=1}^{m}\frac{N_{k^{(i)}}}{p_i},$
we can choose real numbers $\lambda_1,\ldots,\lambda_m$ such that
$\lambda_i>-\frac{N_{k^{(i)}}}{p_i},\; i=1,\ldots,m,$
and
$\sum\limits_{i=1}^{m}\lambda_i=N_k\lambda.$

Let us construct a purely radial function
\[
f_0(x)=f_0(\rho_1\omega_1, \ldots, \rho_m\omega_m)=\prod_{i=1}^{m}\rho_i^{\lambda_i}.
\]

By the homogeneity of $f_0$,
\[
f_0(Rx)=\prod_{i=1}^{m}(R \rho_i)^{\lambda_i}=R^{\sum\limits_{i=1}^{m}\lambda_i}f_0(x)=R^{N_k\lambda}f_0(x),
\]
and hence
\[
\left\|f_0\chi_{B(0,R)}\right\|_{L_{\mathrm{rad}, k}^{\vec p} L_{\mathrm{ang}, k}^{\vec q}(\mathbb R^n)}
=R^{N_k\lambda+\sum\limits_{i=1}^{m}\frac{N_{k^{(i)}}}{p_i}}
\left\|f_0\chi_{B(0,1)}\right\|_{L_{\mathrm{rad}, k}^{\vec p} L_{\mathrm{ang}, k}^{\vec q}(\mathbb R^n)}.
\]
Moreover,
\[
|B(0,R)|_k=\nu_{n,k}R^{N_k}
\]
and
\[
\left\|\chi_{B(0,R)}\right\|_{L_{\mathrm{rad}, k}^{\vec p} L_{\mathrm{ang}, k}^{\vec q}(\mathbb R^n)}
=
R^{\sum\limits_{i=1}^{m}\frac{N_{k^{(i)}}}{p_i}}
\left\|\chi_{B(0,1)}\right\|_{L_{\mathrm{rad}, k}^{\vec p} L_{\mathrm{ang}, k}^{\vec q}(\mathbb R^n)}.
\]
Therefore,
\begin{align*}
\frac{
\left\|f_0\chi_{B(0,R)}\right\|_{L_{\mathrm{rad}, k}^{\vec p} L_{\mathrm{ang}, k}^{\vec q}(\mathbb R^n)}
}{|B(0,R)|_k^\lambda\left\|\chi_{B(0,R)}\right\|_{L_{\mathrm{rad}, k}^{\vec p} L_{\mathrm{ang}, k}^{\vec q}(\mathbb R^n)}}
&=\frac{R^{N_k\lambda+\sum\limits_{i=1}^{m}\frac{N_{k^{(i)}}}{p_i}}\left\|f_0\chi_{B(0,1)}\right\|_{L_{\mathrm{rad}, k}^{\vec p} L_{\mathrm{ang}, k}^{\vec q}(\mathbb R^n)}
}{\nu_{n,k}^\lambda R^{N_k\lambda}R^{\sum\limits_{i=1}^{m}\frac{N_{k^{(i)}}}{p_i}}
\left\|\chi_{B(0,1)}\right\|_{L_{\mathrm{rad}, k}^{\vec p} L_{\mathrm{ang}, k}^{\vec q}(\mathbb R^n)}} \\
&=\frac{\left\|f_0\chi_{B(0,1)}\right\|_{L_{\mathrm{rad}, k}^{\vec p} L_{\mathrm{ang}, k}^{\vec q}(\mathbb R^n)}
}{\nu_{n,k}^\lambda\left\|\chi_{B(0,1)}\right\|_{L_{\mathrm{rad}, k}^{\vec p} L_{\mathrm{ang}, k}^{\vec q}(\mathbb R^n)}}.
\end{align*}

Since $\lambda_i>-\frac{N_{k^{(i)}}}{p_i}$ for every $i$, we have $\lambda_i p_i + N_{k^{(i)}} - 1 > -1$. Thus
\[
\left\|f_0\chi_{B(0,1)}\right\|_{L_{\mathrm{rad}, k}^{\vec p} L_{\mathrm{ang}, k}^{\vec q}(\mathbb R^n)}<\infty.
\]
Consequently,
\[
\left\|f_0\right\|_{\dot B_k^{\vec p, \vec q,\lambda}(\mathbb R^n)}
=
\frac{
\left\|f_0\chi_{B(0,1)}\right\|_{L_{\mathrm{rad}, k}^{\vec p} L_{\mathrm{ang}, k}^{\vec q}(\mathbb R^n)}
}{
\nu_{n,k}^\lambda
\left\|\chi_{B(0,1)}\right\|_{L_{\mathrm{rad}, k}^{\vec p} L_{\mathrm{ang}, k}^{\vec q}(\mathbb R^n)}
}
<\infty.
\]
Hence,
\[
f_0\in\dot B_k^{\vec p, \vec q,\lambda}(\mathbb R^n).
\]

Furthermore, for $x\in\mathbb R^n$, we have
\begin{align*}
\mathcal{H}_{\varphi} f_0(x)
&=\int_0^1 f_0(tx)\varphi(t)\,\mathrm{d}t \\
&=\int_0^1\prod_{i=1}^{m}(t \rho_i)^{\lambda_i}\varphi(t)\,\mathrm{d}t \\
&=\prod_{i=1}^{m}\rho_i^{\lambda_i}\int_0^1 t^{\sum\limits_{i=1}^{m}\lambda_i}\varphi(t)\,\mathrm{d}t \\
&=f_0(x)\int_0^1 t^{N_k\lambda}\varphi(t)\,\mathrm{d}t.
\end{align*}
Since $\mathcal{H}_{\varphi}$ is bounded on
$\dot B_k^{\vec p, \vec q,\lambda}(\mathbb R^n)$ and
$f_0\not\equiv0$, it follows that
\[
\int_0^1 t^{N_k\lambda}\varphi(t)\,\mathrm{d}t<\infty.
\]
Moreover,
\begin{align*}
\left\|\mathcal{H}_{\varphi}\right\|_{\dot B_k^{\vec p, \vec q,\lambda}(\mathbb R^n)\to \dot B_k^{\vec p, \vec q,\lambda}(\mathbb R^n)}
\geq\frac{\left\|\mathcal{H}_{\varphi} f_0\right\|_{\dot B_k^{\vec p, \vec q,\lambda}(\mathbb R^n)}}{\left\|f_0\right\|_{\dot B_k^{\vec p, \vec q,\lambda}(\mathbb R^n)}}
=\int_0^1 t^{N_k\lambda}\varphi(t)\,\mathrm{d}t .
\end{align*}
The proof of Theorem \ref{2} is completed.
\end{proof}

\section{Boundednesss on $\mathcal{H}_{\varphi}^{b}$ on $\dot{M}_{\vec r, \vec{\tilde r}, k}^p(\mathbb{R}^n)$ and $\dot{B}^{\vec r, \vec{\tilde r},\lambda}_k(\mathbb R^n)$}\label{sec4}

Now we study the condition on $\varphi$ such that $\mathcal{H}_{\varphi}^{b}$ is bounded  on $\dot{M}_{\vec r, \vec{\tilde r}, k}^p(\mathbb{R}^n)$ and $\dot{B}^{\vec r, \vec{\tilde r},\lambda}_k(\mathbb R^n)$. Our result can be read as follows.

\begin{theorem}\label{3}
Let $0<p<\infty$. Let $\vec s=(s_1,\ldots,s_m)$, $\vec r=(r_1,\ldots,r_m)$, and $\vec q=(q_1,\ldots,q_m)$ be radial indices, and let $\vec{\tilde s}=(\tilde s_1,\ldots,\tilde s_m)$, $\vec{\tilde r}=(\tilde r_1,\ldots,\tilde r_m)$, and $\vec{\tilde q}=(\tilde q_1,\ldots,\tilde q_m)$ be angular indices satisfying
$1<\vec s<\vec r<\infty,\:1<\vec{\tilde s}<\vec{\tilde r}<\infty,\:1<\vec q, \vec{\tilde q}<\infty,$
and $\frac1{s_i}=\frac1{r_i}+\frac1{q_i},\:\frac1{\tilde{s}_i}=\frac1{\tilde{r}_i}+\frac1{\tilde{q}_i},\: i=1,\ldots,m.$
Let $\sum\limits_{i=1}^m\frac{N_{k^{(i)}}}{r_i}> \frac{N_k}{p}$ and $\varphi$ be a non-negative integrable function on $(0,1]$. Then $\mathcal{H}_{\varphi}^{b}$ is bounded from
$\dot{M}_{\vec r, \vec{\tilde r}, k}^p(\mathbb{R}^n)$ to
$\dot{M}_{\vec s, \vec{\tilde s}, k}^p(\mathbb{R}^n)$ for all
$b\in \mathrm{CMO}_{\vec q, \vec{\tilde q}, k}(\mathbb{R}^n)$ if and only if
\begin{equation}
\int_0^1t^{-\frac{N_k}{p}}\varphi(t)\log\frac{2}{t}\,\mathrm{d}t<\infty.\label{4.1}
\end{equation}
\end{theorem}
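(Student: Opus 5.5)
For sufficiency, fix $R>0$ and $B=B(0,R)$, and write $b(x)-b(tx)=(b(x)-b_{B,k})-(b(tx)-b_{tB,k})-(b_{tB,k}-b_{B,k})$ with $tB=B(0,tR)$. This splits $\mathcal{H}_{\varphi}^{b}f\chi_B$ into three terms $I_1,I_2,I_3$. We then apply Minkowski's inequality in $t$, as in the proof of Theorem~\ref{th1}.

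\begin{itemize}
\item[$I_1$:] Use Hölder's inequality in the mixed radial-angular norm, coordinate by coordinate, with $\frac1{s_i}=\frac1{r_i}+\frac1{q_i}$ and $\frac1{\tilde s_i}=\frac1{\tilde r_i}+\frac1{\tilde q_i}$. This bounds $\|(b-b_{B,k})\chi_B\,\mathcal{H}_\varphi f\chi_B\|_{\vec s,\vec{\tilde s}}$ by $\|(b-b_{B,k})\chi_B\|_{\vec q,\vec{\tilde q}}\,\|\mathcal{H}_\varphi f\chi_B\|_{\vec r,\vec{\tilde r}}$. The first factor is at most $\|b\|_{\mathrm{CMO}}\|\chi_B\|_{\vec q,\vec{\tilde q}}$. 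The second is controlled by the estimate in Theorem~\ref{th1}. The powers of $|B|_k$ match because $\sum N_{k^{(i)}}/s_i=\sum N_{k^{(i)}}/r_i+\sum N_{k^{(i)}}/q_i$, using the scaling identity for $\|\chi_{B(0,R)}\|$. The result is $\lesssim \int_0^1 t^{-N_k/p}\varphi(t)\,dt$.
\item[$I_2$:] Rescale $x\mapsto tx$ inside the norm, which produces $t^{-\sum N_{k^{(i)}}/s_i}$, then apply Hölder on $tB$. This again gives $t^{-N_k/p}\|b\|_{\mathrm{CMO}}\|f\|$.
\item[$I_3$:] The key lemma is $|b_{tB,k}-b_{B,k}|\lesssim \log\frac2t\,\|b\|_{\mathrm{CMO}_{\vec q,\vec{\tilde q},k}}$. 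To prove it, telescope over dyadic balls $2^{-j}B$, $j\le \lceil\log_2(1/t)\rceil$. For each step, bound $|b_{2^{-j-1}B,k}-b_{2^{-j}B,k}|$ by the average of $|b-b_{2^{-j}B,k}|$ over $2^{-j-1}B$. Then apply Hölder (the mixed Dunkl--Lebesgue Hölder inequality against $\chi_{2^{-j}B}$ in the dual exponents), together with $\|\chi\|_{\vec q}\|\chi\|_{\vec q'}\approx|B|_k$ from the scaling formula for $\|\chi_{B(0,R)}\|$. The contribution of $I_3$ is then $\int_0^1\log\frac2t\,t^{-N_k/p}\varphi(t)\,dt$.
\end{itemize}

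Adding the three bounds and using \eqref{4.1} gives boundedness from $\dot{M}_{\vec r, \vec{\tilde r}, k}^p$ to $\dot{M}_{\vec s, \vec{\tilde s}, k}^p$.

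For necessity, take $b(x)=\log|x|$, or more conveniently $b(x)=\sum_i\log\rho_i$, so that $b(x)-b(tx)=-m\log t$ up to a sign. I will check that $b\in\mathrm{CMO}_{\vec q,\vec{\tilde q},k}$. By dilation, $b(Rx)=b(x)+m\log R$, so the CMO quotient is independent of $R$ and reduces to the finiteness of $\|(b-b_{B(0,1)})\chi_{B(0,1)}\|$, which holds since logarithms are integrable near $0$ in every power.

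Next, since $\sum N_{k^{(i)}}/r_i>N_k/p$, choose $f_0=\prod_i\rho_i^{-N_{k^{(i)}}/\sigma_i}$ with $\sigma_i>r_i$ and $\sum N_{k^{(i)}}/\sigma_i=N_k/p$, exactly as in the proof of Theorem~\ref{th1}. Then
\[
\mathcal{H}_\varphi^b f_0=m\,f_0\int_0^1 t^{-N_k/p}\varphi(t)\log\tfrac1t\,dt .
\]
Since $\sigma_i>r_i>s_i$, we also have $f_0\in \dot{M}_{\vec s, \vec{\tilde s}, k}^p$ with nonzero norm, so boundedness forces $\int_0^1 t^{-N_k/p}\varphi\log\frac1t\,dt<\infty$.

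To upgrade $\log\frac1t$ to $\log\frac2t$, I also need $\int_0^1 t^{-N_k/p}\varphi\,dt<\infty$. Near $t=0$ this is dominated by the log integral. Near $t=1$ it follows from $t^{-N_k/p}\le 2^{N_k/p}$ on $[1/2,1]$ and the integrability of $\varphi$. Together these give \eqref{4.1}.

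The main obstacle is the averaging lemma in $I_3$. It requires carefully verifying Hölder's inequality in the nested radial-angular norms with Dunkl measures, and checking that $\|\chi_B\|_{L^{\vec q}L^{\vec{\tilde q}}}\|\chi_B\|_{L^{\vec q'}L^{\vec{\tilde q}'}}\approx|B|_k$. I would establish both via the explicit scaling formula $R^{\sum N_{k^{(i)}}/q_i}$ and the product angular constants. The telescoping itself is routine once that is in place.
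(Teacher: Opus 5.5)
Your proposal is correct and follows essentially the same route as the paper. For sufficiency you use the same three-term split (your $I_2$ and $I_3$ are the paper's $III$ and $II$), Hölder's inequality combined with the estimate from Theorem~\ref{th1}, and the dyadic telescoping bound $|b_{B(0,tR),k}-b_{B(0,R),k}|\lesssim \log\frac{2}{t}\,\|b\|_{\mathrm{CMO}_{\vec q,\vec{\tilde q},k}}$. For necessity you use the same extremal $f_0=\prod_i\rho_i^{-N_{k^{(i)}}/\sigma_i}$ and the same upgrade from $\log\frac1t$ to $\log\frac2t$ via integrability of $\varphi$ on $[\frac12,1]$. Your optional symbol $b=\sum_i\log\rho_i$, with the dilation argument for CMO membership, works as well as the paper's $b_0=\log|x|$ and only introduces a harmless factor $m$.
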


\begin{theorem}\label{4}
Suppose $1<\vec r<\vec r_1<\infty$ and $1<\vec{\tilde r}<\vec{\tilde r}_1<\infty$, and let
$\frac1{r_i}=\frac1{r_{1i}}+\frac1{r_{2i}}, \,\frac1{\tilde{r}_i}=\frac1{\tilde{r}_{1i}}+\frac1{\tilde{r}_{2i}},\, i=1,\ldots,m.$
Let $-\frac1{N_k}\sum\limits_{i=1}^{m}\frac{N_{k^{(i)}}}{r_{1i}}<\lambda.$
Further suppose that $\varphi$ is a non-negative locally integrable function on $(0,1]$.
Then $\mathcal{H}_{\varphi}^{b}$ is bounded from
$\dot{B}^{\vec r_1, \vec{\tilde r}_1,\lambda}_k(\mathbb R^n)$ to
$\dot{B}^{\vec r, \vec{\tilde r},\lambda}_k(\mathbb R^n)$ for all
$b\in \mathrm{CMO}_{\vec r_2, \vec{\tilde r}_2, k}(\mathbb R^n)$ if and only if
\begin{equation}
\int_0^1 t^{N_k\lambda}\log\frac1t\,\varphi(t)\,\mathrm{d}t<\infty.
\label{12}
\end{equation}
\end{theorem}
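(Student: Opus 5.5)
The plan follows the classical scheme for commutators of Hardy-type operators with central BMO symbols, adapted to the mixed radial-angular Dunkl norms via the scaling identities of Section \ref{sec2} and the computations in the proof of Theorem \ref{2}. Throughout, $B_R=B(0,R)$ and $\|\cdot\|_{\vec u,\vec{\tilde u}}$ abbreviates $\|\cdot\|_{L_{\mathrm{rad},k}^{\vec u}L_{\mathrm{ang},k}^{\vec{\tilde u}}(\mathbb R^n)}$.

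\emph{Sufficiency.} Fix $R>0$ and split, for $t\in(0,1]$,
\[
b(x)-b(tx)=\bigl(b(x)-b_{B_R,k}\bigr)-\bigl(b(tx)-b_{B_{tR},k}\bigr)-\bigl(b_{B_{tR},k}-b_{B_R,k}\bigr).
\]
This gives $\mathcal H_\varphi^b f\,\chi_{B_R}\le I_1+I_2+I_3$ pointwise in absolute value. Apply Minkowski's inequality in $t$ and then H\"older's inequality in the iterated norm, block by block, with $\frac1{r_i}=\frac1{r_{1i}}+\frac1{r_{2i}}$ and $\frac1{\tilde r_i}=\frac1{\tilde r_{1i}}+\frac1{\tilde r_{2i}}$. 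This yields a mixed-norm H\"older inequality $\|gh\|_{\vec r,\vec{\tilde r}}\le\|g\|_{\vec r_1,\vec{\tilde r}_1}\|h\|_{\vec r_2,\vec{\tilde r}_2}$.

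For $I_1$ we get $\|(b-b_{B_R,k})\chi_{B_R}\|_{\vec r_2,\vec{\tilde r}_2}\,\|\mathcal H_{|\varphi|}|f|\chi_{B_R}\|_{\vec r_1,\vec{\tilde r}_1}$. The second factor is handled exactly as in the proof of Theorem \ref{2}. For $I_2$ we use the dilation $\|g(t\cdot)\chi_{B_R}\|=t^{-\sum N_{k^{(i)}}/u_i}\|g\chi_{B_{tR}}\|$ in both factors, turning everything into quantities on $B_{tR}$. For $I_3$ we pull out $|b_{B_{tR},k}-b_{B_R,k}|$ and again argue as in Theorem \ref{2}.

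Normalization uses the scaling identity
\[
\|\chi_{B_R}\|_{\vec u,\vec{\tilde u}}=R^{\sum_i N_{k^{(i)}}/u_i}\|\chi_{B_1}\|_{\vec u,\vec{\tilde u}}.
\]
It gives $\|\chi_{B_R}\|_{\vec r_1,\vec{\tilde r}_1}\|\chi_{B_R}\|_{\vec r_2,\vec{\tilde r}_2}=C\|\chi_{B_R}\|_{\vec r,\vec{\tilde r}}$ with $C$ independent of $R$. Collecting terms yields
\[
\|\mathcal H^b_\varphi f\|_{\dot B^{\vec r,\vec{\tilde r},\lambda}_k}\lesssim\|b\|_{\mathrm{CMO}_{\vec r_2,\vec{\tilde r}_2,k}}\|f\|_{\dot B^{\vec r_1,\vec{\tilde r}_1,\lambda}_k}\int_0^1t^{N_k\lambda}\varphi(t)\Bigl(1+\sup_R|b_{B_{tR},k}-b_{B_R,k}|/\|b\|_{\mathrm{CMO}}\Bigr)\mathrm dt .
\]

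\emph{Key lemma (main obstacle).} The central step is the estimate
\[
|b_{B_{tR},k}-b_{B_R,k}|\lesssim\log\tfrac2t\,\|b\|_{\mathrm{CMO}_{\vec r_2,\vec{\tilde r}_2,k}}.
\]
Choose $j$ with $2^{-j-1}<t\le2^{-j}$ and telescope over the dyadic balls $B_{2^{-l}R}$. Each step is bounded by
\[
\frac{1}{|B_{2^{-l-1}R}|_k}\int_{B_{2^{-l}R}}|b-b_{B_{2^{-l}R},k}|\,\mathrm d\mu_k .
\]
By H\"older in the mixed norm this is at most
\[
\frac{\|\chi_{B_\rho}\|_{\vec r_2',\vec{\tilde r}_2'}\,\|\chi_{B_\rho}\|_{\vec r_2,\vec{\tilde r}_2}}{|B_\rho|_k}\,\|b\|_{\mathrm{CMO}}\qquad(\rho=2^{-l}R).
\]
The prefactor is a constant, since both numerator and denominator scale like $\rho^{N_k}$. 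The care needed here is in tracking the iterated radial-angular H\"older inequality with the product Dunkl measure, and in checking that the constants are truly $R$-independent.

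Finally, $\log\frac2t\le1+\log\frac1t$ holds on $(0,1]$. Moreover $\int_{1/2}^1t^{N_k\lambda}\varphi\,\mathrm dt<\infty$ by local integrability of $\varphi$ on $(0,1]$, and $\log\frac1t\ge\log2$ on $(0,\tfrac12]$. Together these show that \eqref{12} implies $\int_0^1t^{N_k\lambda}\varphi(t)\log\frac2t\,\mathrm dt<\infty$, which completes sufficiency.

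\emph{Necessity.} Take $b(x)=\log|x|$. It is dilation invariant modulo constants, so its CMO quotient is independent of $R$; it is finite at $R=1$ because $\log|x|$ is integrable to every power near $0$ with respect to the product radial measures. Hence $b\in\mathrm{CMO}_{\vec r_2,\vec{\tilde r}_2,k}$. As in Theorem \ref{2}, choose $\lambda_i>-N_{k^{(i)}}/r_{1i}$ with $\sum_i\lambda_i=N_k\lambda$, and set $f_0=\prod_i\rho_i^{\lambda_i}$. Then $f_0\in\dot B^{\vec r_1,\vec{\tilde r}_1,\lambda}_k$. Since $r_i<r_{1i}$, also $\lambda_i>-N_{k^{(i)}}/r_i$, so $f_0\in\dot B^{\vec r,\vec{\tilde r},\lambda}_k$ with nonzero norm. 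Because $b(x)-b(tx)=\log\frac1t$,
\[
\mathcal H^b_\varphi f_0=f_0\int_0^1t^{N_k\lambda}\log\tfrac1t\,\varphi(t)\,\mathrm dt .
\]
Boundedness of $\mathcal H^b_\varphi$ therefore forces \eqref{12}.
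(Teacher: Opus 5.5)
Your proposal is correct and follows essentially the same route as the paper. Sufficiency uses the same three-term splitting of $b(x)-b(tx)$ around $b_{B(0,R),k}$ and $b_{B(0,tR),k}$, handled by Minkowski, mixed H\"older and dilation, together with the same dyadic telescoping bound $|b_{B(0,R),k}-b_{B(0,tR),k}|\lesssim(1+\log\frac1t)\|b\|_{\mathrm{CMO}_{\vec r_2,\vec{\tilde r}_2,k}}$; necessity uses the same extremal pair $b_0=\log|x|$, $f_0=\prod_i\rho_i^{\lambda_i}$, and your dilation-invariance argument for $b_0\in\mathrm{CMO}_{\vec r_2,\vec{\tilde r}_2,k}$ is a compact version of the paper's explicit computation $(b_0)_{B(0,R),k}=\log R-\frac1{N_k}$ followed by rescaling.
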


\begin{theorem}\label{5}
Suppose $1<\vec r<\vec r_1<\infty$ and $1<\vec{\tilde r}<\vec{\tilde r}_1<\infty$, and let
$\frac1{r_i}=\frac1{r_{1i}}+\frac1{r_{2i}}, \, \frac1{\tilde{r}_i}=\frac1{\tilde{r}_{1i}}+\frac1{\tilde{r}_{2i}},\, i=1,\ldots,m.$
Let $-\frac1{N_k}\sum\limits_{i=1}^{m}\frac{N_{k^{(i)}}}{r_i}<\lambda,\,-\frac1{N_k}\sum\limits_{i=1}^{m}\frac{N_{k^{(i)}}}{r_{1i}}<\lambda_1,\,0<\lambda_2<\frac1{N_k},$
and suppose that $\lambda=\lambda_1+\lambda_2.$
Further suppose that $\varphi$ is a non-negative function on $(0,1]$.
Then $\mathcal{H}_{\varphi}^{b}$ is bounded from
$\dot{B}^{\vec r_1, \vec{\tilde r}_1,\lambda_1}_k(\mathbb R^n)$ to
$\dot{B}^{\vec r, \vec{\tilde r},\lambda}_k(\mathbb R^n)$ for all
$b\in \mathrm{CMO}_{\vec r_2, \vec{\tilde r}_2,\lambda_2,k}(\mathbb R^n)$ if
\begin{equation}
\int_0^1 t^{N_k\lambda_1}\varphi(t)\,\mathrm{d}t<\infty.
\label{13}
\end{equation}
\end{theorem}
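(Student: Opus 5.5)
We prove the sufficiency statement of Theorem \ref{5}. The plan is the standard splitting of the commutator combined with the scaling computations from the proof of Theorem \ref{2}. Fix $R>0$ and write $B_R=B(0,R)$ and $b_R=b_{B_R,k}$. For $x\in B_R$ we split
\[
\mathcal{H}_{\varphi}^{b}f(x)=\int_0^1\bigl(b(x)-b_R\bigr)f(tx)\varphi(t)\,\mathrm{d}t+\int_0^1\bigl(b_R-b_{tR}\bigr)f(tx)\varphi(t)\,\mathrm{d}t+\int_0^1\bigl(b_{tR}-b(tx)\bigr)f(tx)\varphi(t)\,\mathrm{d}t=:I_1+I_2+I_3 .
\]
In each piece we apply Minkowski's inequality in $t$. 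We then use H\"older's inequality in the mixed radial-angular norm, iterated block by block with $\frac1{r_i}=\frac1{r_{1i}}+\frac1{r_{2i}}$ and the analogous angular relation, to get $\|gh\|_{L^{\vec r}_{\mathrm{rad},k}L^{\vec{\tilde r}}_{\mathrm{ang},k}}\le\|g\|_{L^{\vec r_2}_{\mathrm{rad},k}L^{\vec{\tilde r}_2}_{\mathrm{ang},k}}\|h\|_{L^{\vec r_1}_{\mathrm{rad},k}L^{\vec{\tilde r}_1}_{\mathrm{ang},k}}$. We also use the scaling identities $\|\chi_{B_R}\|=R^{\sum N_{k^{(i)}}/r_i}\|\chi_{B_1}\|$, which factor as the product of the corresponding $\vec r_1$ and $\vec r_2$ quantities, and $|B_{tR}|_k=t^{N_k}|B_R|_k$.

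For $I_1$, Hölder gives the bound
\[
\int_0^1\varphi(t)\,\|(b-b_R)\chi_{B_R}\|_{\vec r_2}\,\|f(t\cdot)\chi_{B_R}\|_{\vec r_1}\,\mathrm{d}t ,
\]
and the first factor is at most $\|b\|_{\mathrm{CMO}_{\lambda_2}}|B_R|_k^{\lambda_2}\|\chi_{B_R}\|_{\vec r_2}$. For the second factor, the change of variables in Theorem \ref{2} gives
\[
\|f(t\cdot)\chi_{B_R}\|_{\vec r_1}\le t^{N_k\lambda_1}|B_R|_k^{\lambda_1}\|\chi_{B_R}\|_{\vec r_1}\|f\|_{\dot B^{\lambda_1}} .
\]
After dividing by $|B_R|_k^{\lambda}\|\chi_{B_R}\|_{\vec r}$ we get $C\|b\|\|f\|\int_0^1t^{N_k\lambda_1}\varphi(t)\,\mathrm{d}t$. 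For $I_3$, substituting $x\mapsto x/t$ turns $\|(b(t\cdot)-b_{tR})f(t\cdot)\chi_{B_R}\|_{\vec r}$ into $t^{-\sum N_{k^{(i)}}/r_i}\|(b-b_{tR})f\chi_{B_{tR}}\|_{\vec r}$. Applying Hölder on $B_{tR}$ yields
\[
|B_{tR}|_k^{\lambda_1+\lambda_2}\|\chi_{B_{tR}}\|_{\vec r}\,\|b\|\,\|f\| .
\]
Rescaling back produces the factor $t^{N_k\lambda}=t^{N_k\lambda_1}t^{N_k\lambda_2}\le t^{N_k\lambda_1}$, which is even better than needed.

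The main obstacle is $I_2$, where the unweighted condition (\ref{13}) must absorb the logarithm that appears in Theorems \ref{3}--\ref{4}. This is exactly where $\lambda_2>0$ enters. Write $2^{-j-1}<t\le2^{-j}$. By the standard telescoping estimate $|b_{2^{-l-1}R}-b_{2^{-l}R}|\lesssim|B_{2^{-l}R}|_k^{\lambda_2}\|b\|$ (Hölder, then the CMO definition), together with the comparison of $b_{tR}$ with $b_{2^{-j}R}$ in the same way, we obtain
\[
|b_R-b_{tR}|\lesssim\|b\|\sum_{l=0}^{j}|B_{2^{-l}R}|_k^{\lambda_2}\lesssim\|b\|\,|B_R|_k^{\lambda_2}\sum_{l\ge0}2^{-lN_k\lambda_2}\lesssim\|b\|\,|B_R|_k^{\lambda_2}.
\]
The geometric series converges because $\lambda_2>0$, so no $\log\frac1t$ appears. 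The averages are finite since $b\in L^1_{loc}$, and $\lambda_2<1/N_k$ is the standing hypothesis of the CMO space. Then
\[
\|I_2\chi_{B_R}\|_{\vec r}\lesssim\|b\|\,|B_R|_k^{\lambda_2}\int_0^1\varphi(t)\,\|f(t\cdot)\chi_{B_R}\|_{\vec r}\,\mathrm{d}t .
\]
We bound $\|f(t\cdot)\chi_{B_R}\|_{\vec r}$ by Hölder against $\chi_{B_R}$ in the $\vec r_2$ norm and then argue as for $I_1$, giving the bound $\|b\|\|f\|\int_0^1t^{N_k\lambda_1}\varphi(t)\,\mathrm{d}t$.

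Summing the three pieces and taking the supremum over $R>0$ gives
\[
\|\mathcal{H}_\varphi^bf\|_{\dot B^{\vec r,\vec{\tilde r},\lambda}_k}\lesssim\|b\|_{\mathrm{CMO}_{\vec r_2,\vec{\tilde r}_2,\lambda_2,k}}\,\|f\|_{\dot B^{\vec r_1,\vec{\tilde r}_1,\lambda_1}_k}\int_0^1t^{N_k\lambda_1}\varphi(t)\,\mathrm{d}t .
\]
The constraints on $\lambda$ and $\lambda_1$ only guarantee that the spaces are nontrivial, and they play no role in the estimates.
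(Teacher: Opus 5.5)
Your proposal is correct and follows essentially the same route as the paper's proof. Both split the commutator into the three terms $K$, $KK$, $KKK$ and bound the outer two with Minkowski's inequality, iterated mixed Hölder and the dilation identities. Both then control $|b_{B(0,R),k}-b_{B(0,tR),k}|$ by dyadic telescoping, where $\lambda_2>0$ makes the geometric series converge so that no $\log\frac1t$ appears. Your explicit remark that $KKK$ produces $t^{N_k\lambda}\le t^{N_k\lambda_1}$ fills in what the paper leaves as ``similarly to Theorem 4.2''.
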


\begin{proof}[Proof of Theorem 4.1]
Suppose that (\ref{4.1}) holds. We shall prove that $\mathcal{H}_{\varphi}^{b}$ is bounded. For any ball $B(0,R)$, $R>0$, we get
\begin{align*}
&\|\mathcal{H}_{\varphi}^{b}f\|_{\dot{M}_{\vec s, \vec{\tilde s}, k}^p(\mathbb{R}^n)}\\
&=\sup_{R>0}|B(0,R)|_k^{\frac1p-\frac1{N_k}\left(\sum\limits_{i=1}^m\frac{N_{k^{(i)}}}{s_i}\right)}\|\mathcal{H}_{\varphi}^{b}f\cdot\chi_{B(0,R)}\|_{L_{\mathrm{rad}, k}^{\vec s} L_{\mathrm{ang}, k}^{\vec{\tilde s}}(\mathbb{R}^n)}\\
&\leqslant\sup_{R>0}|B(0,R)|_k^{\frac1p-\frac1{N_k}\left(\sum\limits_{i=1}^m\frac{N_{k^{(i)}}}{s_i}\right)}\left\|\int_0^1(b(\cdot)-b_{B(0,R),k})f(t\cdot)\varphi(t)\,\mathrm{d}t\cdot\chi_{B(0,R)}(\cdot)\right\|_{L_{\mathrm{rad}, k}^{\vec s} L_{\mathrm{ang}, k}^{\vec{\tilde s}}(\mathbb{R}^n)}\\
&\quad+\sup_{R>0}|B(0,R)|_k^{\frac1p-\frac1{N_k}\left(\sum\limits_{i=1}^m\frac{N_{k^{(i)}}}{s_i}\right)}\left\|\int_0^1(b_{B(0,R),k}-b_{B(0,tR),k})f(t\cdot)\varphi(t)\,\mathrm{d}t\cdot\chi_{B(0,R)}(\cdot)
\right\|_{L_{\mathrm{rad}, k}^{\vec s} L_{\mathrm{ang}, k}^{\vec{\tilde s}}(\mathbb{R}^n)}\\
&\quad+\sup_{R>0}|B(0,R)|_k^{\frac1p-\frac1{N_k}\left(\sum\limits_{i=1}^m\frac{N_{k^{(i)}}}{s_i}\right)}
\left\|\int_0^1(b_{B(0,tR),k}-b(t\cdot))f(t\cdot)\varphi(t)\,\mathrm{d}t\cdot\chi_{B(0,R)}(\cdot)\right\|_{L_{\mathrm{rad}, k}^{\vec s} L_{\mathrm{ang}, k}^{\vec{\tilde s}}(\mathbb{R}^n)}\\
&:=I+II+III,
\end{align*}
here and below
\[
b_{B(0,R),k}
=
\frac{1}{|B(0,R)|_k}
\int_{B(0,R)}b(x)\,\mathrm{d}\mu_k(x).
\]

For the first term $I$, by H\"older's inequality and the scaling equivalence
\[
\|\chi_{B(0,R)}\|_{L_{\mathrm{rad}, k}^{\vec q} L_{\mathrm{ang}, k}^{\vec{\tilde q}}(\mathbb{R}^n)}
\sim
|B(0,R)|_k^{\frac1{N_k}
\left(\sum_{i=1}^m\frac{N_{k^{(i)}}}{q_i}\right)},
\] we derive
\begin{align*}
I&=\sup_{R>0}|B(0,R)|_k^{\frac1p-\frac1{N_k}\left(\sum\limits_{i=1}^m\frac{N_{k^{(i)}}}{s_i}\right)}
\left\|\mathcal{H}_{\varphi}f(\cdot)(b(\cdot)-b_{B(0,R),k})\chi_{B(0,R)}(\cdot)\right\|_{L_{\mathrm{rad}, k}^{\vec s} L_{\mathrm{ang}, k}^{\vec{\tilde s}}(\mathbb{R}^n)}\\
&\leqslant\sup_{R>0}|B(0,R)|_k^{\frac1p-\frac1{N_k}\left(\sum\limits_{i=1}^m\frac{N_{k^{(i)}}}{s_i}\right)}\\
&\quad\times\left\|\mathcal{H}_{\varphi}f(\cdot)\chi_{B(0,R)}(\cdot)\right\|_{L_{\mathrm{rad}, k}^{\vec r} L_{\mathrm{ang}, k}^{\vec{\tilde r}}(\mathbb{R}^n)}
\left\|(b(\cdot)-b_{B(0,R),k})\chi_{B(0,R)}(\cdot)\right\|_{L_{\mathrm{rad}, k}^{\vec q} L_{\mathrm{ang}, k}^{\vec{\tilde q}}(\mathbb{R}^n)}\\
&\leqslant\sup_{R>0}|B(0,R)|_k^{\frac1p-\frac1{N_k}\left(\sum\limits_{i=1}^m\frac{N_{k^{(i)}}}{r_i}\right)}
\left\|\mathcal{H}_{\varphi}f(\cdot)\chi_{B(0,R)}(\cdot)\right\|_{L_{\mathrm{rad}, k}^{\vec r} L_{\mathrm{ang}, k}^{\vec{\tilde r}}(\mathbb{R}^n)}\\
&\quad\times|B(0,R)|_k^{-\frac1{N_k}\left(\sum\limits_{i=1}^m\frac{N_{k^{(i)}}}{q_i}\right)}
\left\|(b(\cdot)-b_{B(0,R),k})\chi_{B(0,R)}(\cdot)\right\|_{L_{\mathrm{rad}, k}^{\vec q} L_{\mathrm{ang}, k}^{\vec{\tilde q}}(\mathbb{R}^n)}\\
&\leqslant\int_0^1 t^{-\frac{N_k}{p}}\varphi(t)\,\mathrm{d}t\,\|b\|_{\mathrm{CMO}_{\vec q, \vec{\tilde q}, k}(\mathbb{R}^n)}\|f\|_{\dot{M}_{\vec r, \vec{\tilde r}, k}^p(\mathbb{R}^n)}.
\end{align*}

Moreover,
\[
\int_0^1 t^{-\frac{N_k}{p}}\varphi(t)\,\mathrm{d}t
\lesssim\int_0^1t^{-\frac{N_k}{p}}\varphi(t)\log\frac{2}{t}\,\mathrm{d}t.
\]

For the last term $III$, by using Minkowski's inequality and H\"older's inequality, we obtain
\begin{align*}
III
&=\sup_{R>0}|B(0,R)|_k^{\frac1p-\frac1{N_k}\left(\sum_{i=1}^m\frac{N_{k^{(i)}}}{s_i}\right)}\\
&\quad\times\left\|\int_0^1(b_{B(0,tR),k}-b(t\cdot))f(t\cdot)\varphi(t)\,\mathrm{d}t
\cdot\chi_{B(0,R)}(\cdot)\right\|_{L_{\mathrm{rad}, k}^{\vec s} L_{\mathrm{ang}, k}^{\vec{\tilde s}}(\mathbb{R}^n)}\\
&\leqslant\sup_{R>0}|B(0,R)|_k^{\frac1p-\frac1{N_k}\left(\sum_{i=1}^m\frac{N_{k^{(i)}}}{s_i}\right)}\\
&\quad\times\int_0^1\varphi(t)\left\|(b_{B(0,tR),k}-b(t\cdot))f(t\cdot)\chi_{B(0,R)}(\cdot)
\right\|_{L_{\mathrm{rad}, k}^{\vec s} L_{\mathrm{ang}, k}^{\vec{\tilde s}}(\mathbb{R}^n)}\,\mathrm{d}t\\
&\leqslant\sup_{R>0}|B(0,R)|_k^{\frac1p-\frac1{N_k}\left(\sum_{i=1}^m\frac{N_{k^{(i)}}}{s_i}\right)}\\
&\quad\times\int_0^1\varphi(t)\left\|f(t\cdot)\chi_{B(0,R)}(\cdot)
\right\|_{L_{\mathrm{rad}, k}^{\vec r} L_{\mathrm{ang}, k}^{\vec{\tilde r}}(\mathbb{R}^n)}
\left\|(b_{B(0,tR),k}-b(t\cdot))\chi_{B(0,R)}(\cdot)
\right\|_{L_{\mathrm{rad}, k}^{\vec q} L_{\mathrm{ang}, k}^{\vec{\tilde q}}(\mathbb{R}^n)}\,\mathrm{d}t\\
&=\sup_{R>0}\int_0^1t^{-\frac{N_k}{p}}\varphi(t)|B(0,tR)|_k^{\frac1p-\frac1{N_k}\left(\sum_{i=1}^m\frac{N_{k^{(i)}}}{s_i}\right)}\\
&\quad\times\left\|f(\cdot)\chi_{B(0,tR)}(\cdot)\right\|_{L_{\mathrm{rad}, k}^{\vec r} L_{\mathrm{ang}, k}^{\vec{\tilde r}}(\mathbb{R}^n)}
\left\|(b_{B(0,tR),k}-b(\cdot))\chi_{B(0,tR)}(\cdot)\right\|_{L_{\mathrm{rad}, k}^{\vec q} L_{\mathrm{ang}, k}^{\vec{\tilde q}}(\mathbb{R}^n)}\,\mathrm{d}t\\
&\leqslant\int_0^1t^{-\frac{N_k}{p}}\varphi(t)\,\mathrm{d}t\,\|b\|_{\mathrm{CMO}_{\vec q, \vec{\tilde q}, k}(\mathbb{R}^n)}\|f\|_{\dot{M}_{\vec r, \vec{\tilde r}, k}^p(\mathbb{R}^n)}.
\end{align*}

For the term $II$, using Minkowski's inequality and H\"older's inequality again, one has
\begin{align*}
II &= \sup_{R>0}|B(0,R)|_k^{\frac1p-\frac1{N_k}\left(\sum\limits_{i=1}^m\frac{N_{k^{(i)}}}{s_i}\right)} \\
&\quad\times\left\|\int_0^1(b_{B(0,R),k}-b_{B(0,tR),k}) f(t\cdot)\varphi(t)\,\mathrm{d}t\cdot\chi_{B(0,R)}(\cdot)\right\|_{L_{\mathrm{rad}, k}^{\vec s} L_{\mathrm{ang}, k}^{\vec{\tilde s}}(\mathbb{R}^n)}\\
&\leqslant\sup_{R>0}|B(0,R)|_k^{\frac1p-\frac1{N_k}\left(\sum\limits_{i=1}^m\frac{N_{k^{(i)}}}{s_i}\right)}\\
&\quad\times\int_0^1\varphi(t)|b_{B(0,R),k}-b_{B(0,tR),k}|\left\|f(t\cdot)\chi_{B(0,R)}(\cdot)\right\|_{L_{\mathrm{rad}, k}^{\vec s} L_{\mathrm{ang}, k}^{\vec{\tilde s}}(\mathbb{R}^n)}\,\mathrm{d}t\\
&\leqslant\sup_{R>0}|B(0,R)|_k^{\frac1p-\frac1{N_k}\left(\sum\limits_{i=1}^m\frac{N_{k^{(i)}}}{s_i}\right)}\\
&\quad\times\int_0^1\varphi(t)|b_{B(0,R),k}-b_{B(0,tR),k}| \left\|f(t\cdot)\chi_{B(0,R)}(\cdot)\right\|_{L_{\mathrm{rad}, k}^{\vec r} L_{\mathrm{ang}, k}^{\vec{\tilde r}}(\mathbb{R}^n)} \left\|\chi_{B(0,R)}(\cdot)\right\|_{L_{\mathrm{rad}, k}^{\vec q} L_{\mathrm{ang}, k}^{\vec{\tilde q}}(\mathbb{R}^n)} \,\mathrm{d}t\\
&\leqslant\sup_{R>0}|B(0,R)|_k^{\frac1p-\frac1{N_k}\left(\sum\limits_{i=1}^m\frac{N_{k^{(i)}}}{r_i}\right)}\\
&\quad\times\int_0^1\varphi(t)|b_{B(0,R),k}-b_{B(0,tR),k}|\left\|f(t\cdot)\chi_{B(0,R)}(\cdot)\right\|_{L_{\mathrm{rad}, k}^{\vec r} L_{\mathrm{ang}, k}^{\vec{\tilde r}}(\mathbb{R}^n)}\,\mathrm{d}t\\
&= \sup_{R>0}|B(0,R)|_k^{\frac1p-\frac1{N_k}\left(\sum\limits_{i=1}^m\frac{N_{k^{(i)}}}{r_i}\right)}\\
&\quad\times\int_0^1\varphi(t)|b_{B(0,R),k}-b_{B(0,tR),k}| \, t^{-\sum\limits_{i=1}^m\frac{N_{k^{(i)}}}{r_i}} \left\|f(\cdot)\chi_{B(0,tR)}(\cdot)\right\|_{L_{\mathrm{rad}, k}^{\vec r} L_{\mathrm{ang}, k}^{\vec{\tilde r}}(\mathbb{R}^n)}\,\mathrm{d}t\\
&\leqslant \sup_{R>0}|B(0,R)|_k^{\frac1p-\frac1{N_k}\left(\sum\limits_{i=1}^m\frac{N_{k^{(i)}}}{r_i}\right)}\\
&\quad\times\int_0^1\varphi(t)|b_{B(0,R),k}-b_{B(0,tR),k}| \, t^{-\sum\limits_{i=1}^m\frac{N_{k^{(i)}}}{r_i}} \|f\|_{\dot{M}_{\vec r, \vec{\tilde r}, k}^p(\mathbb{R}^n)} |B(0,tR)|_k^{-\frac1p+\frac1{N_k}\left(\sum\limits_{i=1}^m\frac{N_{k^{(i)}}}{r_i}\right)}\,\mathrm{d}t\\
&= \|f\|_{\dot{M}_{\vec r, \vec{\tilde r}, k}^p(\mathbb{R}^n)} \sup_{R>0} \int_0^1\varphi(t)|b_{B(0,R),k}-b_{B(0,tR),k}| \, t^{-\sum\limits_{i=1}^m\frac{N_{k^{(i)}}}{r_i}} \left( t^{N_k} \right)^{-\frac1p+\frac1{N_k}\left(\sum\limits_{i=1}^m\frac{N_{k^{(i)}}}{r_i}\right)} \,\mathrm{d}t\\
&= \|f\|_{\dot{M}_{\vec r, \vec{\tilde r}, k}^p(\mathbb{R}^n)} \sup_{R>0}\int_0^1 t^{-\frac{N_k}{p}} \varphi(t)|b_{B(0,R),k}-b_{B(0,tR),k}|\,\mathrm{d}t.
\end{align*}
For any $0<t<1$, there exists some $l\in\mathbb{N}\setminus\{0\}$ such that
$
2^{-l}<t\leqslant 2^{-l+1}.
$
Then
\begin{align*}
II&\leqslant\|f\|_{\dot{M}_{\vec r, \vec{\tilde r}, k}^p(\mathbb{R}^n)}\sup_{R>0}\int_0^1
t^{-\frac{N_k}{p}}\varphi(t)\\
&\quad\times\left(\sum_{v=1}^{l}|b_{B(0,2^{-v}R),k}-b_{B(0,2^{-v+1}R),k}|+|b_{B(0,2^{-l}R),k}-b_{B(0,tR),k}|\right)\,\mathrm{d}t.
\end{align*}

We note that
\begin{align*}
&\left\| \chi_{B(0,2^{-v+1}R)}(\cdot) \right\|_{L_{\mathrm{rad}, k}^{\vec q} L_{\mathrm{ang}, k}^{\vec{\tilde q}}(\mathbb{R}^n)}\left\| \chi_{B(0,2^{-v+1}R)}(\cdot) \right\|_{L_{\mathrm{rad}, k}^{\vec q'} L_{\mathrm{ang}, k}^{\vec{\tilde q}'}(\mathbb{R}^n)}\\
&\lesssim\vert{}B(0,R)\vert{}_k^{\frac{1}{N_k}\sum\limits_{i=1}^m \frac{N_{k^{(i)}}}{q_i}} \vert{}B(0,R)\vert{}_k^{\frac{1}{N_k}\sum\limits_{i=1}^m \frac{N_{k^{(i)}}}{q'_i}}\\
&=  \vert{}B(0,R)\vert{}_k.
\end{align*}
Let $\frac{1}{\vec q}+\frac{1}{(\vec q)'}=1$ and $\frac{1}{\vec{\tilde q}}+\frac{1}{(\vec{\tilde q})'}=1$, for any $v\in\mathbb{N}$, we have
\begin{align*}
&|b_{B(0,2^{-v}R),k}-b_{B(0,2^{-v+1}R),k}|\\
&=\frac{1}{|B(0,2^{-v}R)|_k}\left|\int_{B(0,2^{-v}R)}\left(b(x)-b_{B(0,2^{-v+1}R),k}\right)\,\mathrm{d}\mu_k(x)\right|\\
&\lesssim\frac{1}{|B(0,2^{-v+1}R)|_k}\int_{B(0,2^{-v+1}R)}|b(x)-b_{B(0,2^{-v+1}R),k}|\,\mathrm{d}\mu_k(x)\\
&\leqslant\frac{1}{|B(0,2^{-v+1}R)|_k}\left\|\left(b(\cdot)-b_{B(0,2^{-v+1}R),k}\right)\chi_{B(0,2^{-v+1}R)}(\cdot)\right\|_{L_{\mathrm{rad}, k}^{\vec q} L_{\mathrm{ang}, k}^{\vec{\tilde q}}(\mathbb{R}^n)}\\
&\quad\times\left\| \chi_{B(0,2^{-v+1}R)}(\cdot) \right\|_{L_{\mathrm{rad}, k}^{(\vec q)'} L_{\mathrm{ang}, k}^{(\vec{\tilde q})'}(\mathbb{R}^n)}\\
&\lesssim\frac{1}{\left\| \chi_{B(0,2^{-v+1}R)}(\cdot) \right\|_{L_{\mathrm{rad}, k}^{\vec q} L_{\mathrm{ang}, k}^{\vec{\tilde q}}(\mathbb{R}^n)}}\\
&\quad\times\left\|\left(b(\cdot)-b_{B(0,2^{-v+1}R),k}\right)\chi_{B(0,2^{-v+1}R)}(\cdot)\right\|_{L_{\mathrm{rad}, k}^{\vec q} L_{\mathrm{ang}, k}^{\vec{\tilde q}}(\mathbb{R}^n)}\\
&\leqslant\|b\|_{\mathrm{CMO}_{\vec q, \vec{\tilde q}, k}(\mathbb{R}^n)}.
\end{align*}
Similarly, since $2^{-l}<t\leqslant2^{-l+1}$, we also have
\begin{align*}
&|b_{B(0,2^{-l}R),k}-b_{B(0,tR),k}|\\
&= \frac{1}{|B(0,2^{-l}R)|_k}\left|\int_{B(0,2^{-l}R)}\left(b(x)-b_{B(0,tR),k}\right)\,\mathrm{d}\mu_k(x)\right|\\
&\lesssim \frac{1}{|B(0,tR)|_k}\int_{B(0,tR)}|b(x)-b_{B(0,tR),k}|\,\mathrm{d}\mu_k(x)\\
&\leqslant \frac{1}{|B(0,tR)|_k}\left\|\left(b(\cdot)-b_{B(0,tR),k}\right)\chi_{B(0,tR)}(\cdot)\right\|_{L_{\mathrm{rad}, k}^{\vec q} L_{\mathrm{ang}, k}^{\vec{\tilde q}}(\mathbb{R}^n)}\left\| \chi_{B(0,tR)}(\cdot) \right\|_{L_{\mathrm{rad}, k}^{(\vec q)'} L_{\mathrm{ang}, k}^{(\vec{\tilde q})'}(\mathbb{R}^n)}\\
&\lesssim \frac{1}{\left\| \chi_{B(0,tR)}(\cdot) \right\|_{L_{\mathrm{rad}, k}^{\vec q} L_{\mathrm{ang}, k}^{\vec{\tilde q}}(\mathbb{R}^n)}}\left\|\left(b(\cdot)-b_{B(0,tR),k}\right)\chi_{B(0,tR)}(\cdot)\right\|_{L_{\mathrm{rad}, k}^{\vec q} L_{\mathrm{ang}, k}^{\vec{\tilde q}}(\mathbb{R}^n)}\\
&\leqslant \|b\|_{\mathrm{CMO}_{\vec q, \vec{\tilde q}, k}(\mathbb{R}^n)}.
\end{align*}
Therefore,
\begin{align*}
II&\lesssim\|b\|_{\mathrm{CMO}_{\vec q, \vec{\tilde q}, k}(\mathbb{R}^n)}
\|f\|_{\dot{M}_{\vec r, \vec{\tilde r}, k}^p(\mathbb{R}^n)}\sum_{l=1}^{\infty}\int_{2^{-l}}^{2^{-l+1}}t^{-\frac{N_k}{p}}\varphi(t)(l+1)\,\mathrm{d}t\\
&\lesssim\|b\|_{\mathrm{CMO}_{\vec q, \vec{\tilde q}, k}(\mathbb{R}^n)}\|f\|_{\dot{M}_{\vec r, \vec{\tilde r}, k}^p(\mathbb{R}^n)}\sum_{l=1}^{\infty}
\int_{2^{-l}}^{2^{-l+1}}t^{-\frac{N_k}{p}}\varphi(t)(\log 2^l+1)\,\mathrm{d}t\\
&\lesssim\|b\|_{\mathrm{CMO}_{\vec q, \vec{\tilde q}, k}(\mathbb{R}^n)}
\|f\|_{\dot{M}_{\vec r, \vec{\tilde r}, k}^p(\mathbb{R}^n)}\sum_{l=1}^{\infty}\int_{2^{-l}}^{2^{-l+1}}t^{-\frac{N_k}{p}}\varphi(t)
\left(\log\frac1t+1\right)\,\mathrm{d}t\\
&\lesssim\|b\|_{\mathrm{CMO}_{\vec q, \vec{\tilde q}, k}(\mathbb{R}^n)}\|f\|_{\dot{M}_{\vec r, \vec{\tilde r}, k}^p(\mathbb{R}^n)}
\int_0^1t^{-\frac{N_k}{p}}\varphi(t)\left(\log\frac1t+1\right)\,\mathrm{d}t\\
&\lesssim\|b\|_{\mathrm{CMO}_{\vec q, \vec{\tilde q}, k}(\mathbb{R}^n)}\|f\|_{\dot{M}_{\vec r, \vec{\tilde r}, k}^p(\mathbb{R}^n)}\int_0^1t^{-\frac{N_k}{p}}\varphi(t)
\log\frac{2}{t}\,\mathrm{d}t.
\end{align*}
Combining all the estimates of $I$, $II$ and $III$, it yields
\[
\|\mathcal{H}_{\varphi}^{b}f\|_{\dot{M}_{\vec s, \vec{\tilde s}, k}^p(\mathbb{R}^n)}
\lesssim
\int_0^1t^{-\frac{N_k}{p}}\varphi(t)\log\frac{2}{t}\,\mathrm{d}t\|b\|_{\mathrm{CMO}_{\vec q, \vec{\tilde q}, k}(\mathbb{R}^n)}
\|f\|_{\dot{M}_{\vec r, \vec{\tilde r}, k}^p(\mathbb{R}^n)}.
\]

To prove the converse, suppose that $\mathcal{H}_{\varphi}^{b}$ is bounded from $\dot{M}_{\vec r, \vec{\tilde r}, k}^p(\mathbb{R}^n)$ to $\dot{M}_{\vec s, \vec{\tilde s}, k}^p(\mathbb{R}^n)$.
We take a radial function
\[
f_0(x) = f_0(\rho_1\omega_1, \ldots, \rho_m\omega_m)
=\prod_{i=1}^m\rho_i^{-\frac{N_{k^{(i)}}}{\alpha_i}},
\]
where $r_i<\alpha_i<\infty,\,i=1,\ldots,m$, and $\sum_{i=1}^m\frac{N_{k^{(i)}}}{\alpha_i}=\frac{N_k}{p}$.
Such $\alpha_i$ exist since $\sum\limits_{i=1}^m\frac{N_{k^{(i)}}}{r_i}>\frac{N_k}{p}$.

As in the proof of Theorem \ref{th1}, we know that
$f_0\in \dot{M}_{\vec r, \vec{\tilde r}, k}^p(\mathbb{R}^n).$
It is not hard to verify that
$f_0\in \dot{M}_{\vec s, \vec{\tilde s}, k}^p(\mathbb{R}^n),$
since $1<\vec s<\vec r<\vec \alpha<\infty$ and $1<\vec{\tilde s}<\vec{\tilde r}<\infty$.

Take
\[
b_0(x)=\log |x|,
\qquad x\in\mathbb{R}^n,
\]
and define $b_0(0)=0$. To show that $b_0 \in \mathrm{CMO}_{\vec q, \vec{\tilde q}, k}(\mathbb{R}^n)$, we must demonstrate that the supremum over $R>0$ of its normalized central oscillation is finite.
Through simple calculation, we get
\begin{align*}
\int_{B(0, R)} \log|x| \,\mathrm{d}\mu_k(x)
&= \omega_{n,k} \int_0^R \log\rho \cdot \rho^{N_k-1} \,\mathrm{d}\rho \\
&= \omega_{n,k} \left[ \frac{\rho^{N_k}}{N_k}\log\rho \right]_0^R - \omega_{n,k} \int_0^R \frac{\rho^{N_k-1}}{N_k} \,\mathrm{d}\rho \\
&= \frac{\omega_{n,k} R^{N_k}}{N_k} \log R - \frac{\omega_{n,k} R^{N_k}}{N_k^2}.
\end{align*}
Since $|B(0, R)|_k = \frac{\omega_{n,k}}{N_k} R^{N_k}$,
\[
(b_0)_{B(0, R), k} = \frac{1}{|B(0, R)|_k} \int_{B(0, R)} \log|x| \,\mathrm{d}\mu_k(x) = \log R - \frac{1}{N_k}.
\]

Consequently,
\[
b_0(x) - (b_0)_{B(0, R), k} = \log|x| - \log R + \frac{1}{N_k} = \log\frac{|x|}{R} + \frac{1}{N_k}.
\]

Utilizing $x = Ry$, we obtain that
\begin{align*}
\left\| \big(b_0(\cdot) - (b_0)_{B(0, R), k}\big) \chi_{B(0, R)}(\cdot) \right\|_{L_{\mathrm{rad}, k}^{\vec q} L_{\mathrm{ang}, k}^{\vec{\tilde q}}(\mathbb{R}^n)}= R^{\sum\limits_{i=1}^m \frac{N_{k^{(i)}}}{q_i}} \left\| \left(\log|y| + \frac{1}{N_k}\right) \chi_{B(0, 1)}(\cdot) \right\|_{L_{\mathrm{rad}, k}^{\vec q} L_{\mathrm{ang}, k}^{\vec{\tilde q}}(\mathbb{R}^n)}.
\end{align*}
We also have
\begin{align*}
\|\chi_{B(0, R)}\|_{L_{\mathrm{rad}, k}^{\vec q} L_{\mathrm{ang}, k}^{\vec{\tilde q}}(\mathbb{R}^n)} = R^{\sum\limits_{i=1}^m \frac{N_{k^{(i)}}}{q_i}} \|\chi_{B(0, 1)}\|_{L_{\mathrm{rad}, k}^{\vec q} L_{\mathrm{ang}, k}^{\vec{\tilde q}}(\mathbb{R}^n)}.
\end{align*}

By Minkowski's inequality, we can get
\begin{align*}
&\frac{\left\| \big(b_0 - (b_0)_{B(0, R), k}\big) \chi_{B(0, R)} \right\|_{L_{\mathrm{rad}, k}^{\vec q} L_{\mathrm{ang}, k}^{\vec{\tilde q}}}}{\|\chi_{B(0, R)}\|_{L_{\mathrm{rad}, k}^{\vec q} L_{\mathrm{ang}, k}^{\vec{\tilde q}}}}\\
&= \frac{\left\| \left(\log|\cdot| + \frac{1}{N_k}\right) \chi_{B(0, 1)} \right\|_{L_{\mathrm{rad}, k}^{\vec q} L_{\mathrm{ang}, k}^{\vec{\tilde q}}}}{\|\chi_{B(0, 1)}\|_{L_{\mathrm{rad}, k}^{\vec q} L_{\mathrm{ang}, k}^{\vec{\tilde q}}}}\\
&\leqslant \frac{\left\| \log|\cdot| \chi_{B(0, 1)} \right\|_{L_{\mathrm{rad}, k}^{\vec q} L_{\mathrm{ang}, k}^{\vec{\tilde q}}}}{\|\chi_{B(0, 1)}\|_{L_{\mathrm{rad}, k}^{\vec q} L_{\mathrm{ang}, k}^{\vec{\tilde q}}}} + \frac{\frac{1}{N_k} \|\chi_{B(0, 1)}\|_{L_{\mathrm{rad}, k}^{\vec q} L_{\mathrm{ang}, k}^{\vec{\tilde q}}}}{\|\chi_{B(0, 1)}\|_{L_{\mathrm{rad}, k}^{\vec q} L_{\mathrm{ang}, k}^{\vec{\tilde q}}}} \\
&= \frac{\left\| \log|\cdot| \chi_{B(0, 1)} \right\|_{L_{\mathrm{rad}, k}^{\vec q} L_{\mathrm{ang}, k}^{\vec{\tilde q}}}}{\|\chi_{B(0, 1)}\|_{L_{\mathrm{rad}, k}^{\vec q} L_{\mathrm{ang}, k}^{\vec{\tilde q}}}} + \frac{1}{N_k}.
\end{align*}

Note that for $x = (x_1, \dots, x_m) \in \mathbb{R}^n$ with $x_i = \rho_i \omega_i$, the global radius is $|x| = \left(\sum\limits_{i=1}^m \rho_i^2\right)^{\frac{1}{2}}$. Since $0 \leqslant \rho_i \leqslant |x| < 1$ within the unit ball $B(0,1)$, we have
\[
|\log|x|| \leqslant |\log \rho_1|.
\]

We obtain that
\begin{align*}
&\left\| \log|\cdot| \chi_{B(0, 1)} \right\|_{L_{\mathrm{rad}, k}^{\vec q} L_{\mathrm{ang}, k}^{\vec{\tilde q}}} \\
&\leqslant \left\| |\log \rho_1| \chi_{B(0, 1)} \right\|_{L_{\mathrm{rad}, k}^{\vec q} L_{\mathrm{ang}, k}^{\vec{\tilde q}}} \\
&= \Biggl( \int_0^\infty \cdots \Biggl( \int_{\mathbb{S}^{n_2-1}} \Biggl( \int_0^\infty \chi_{\{\sum\limits_{i=1}^m \rho_i^2 < 1\}} |\log \rho_1|^{q_1} \\
&\quad\times \Biggl( \int_{\mathbb{S}^{n_1-1}} 1 \,\mathrm{d}\sigma_{k^{(1)}}(\omega_1) \Biggr)^{\frac{q_1}{\tilde{q}_1}} \rho_1^{N_{k^{(1)}}-1} \,\mathrm{d}\rho_1 \Biggr)^{\frac{q_2}{q_1}} \,\mathrm{d}\sigma_{k^{(2)}}(\omega_2) \Biggr)^{\frac{q_2}{\tilde{q}_2}} \cdots \rho_m^{N_{k^{(m)}}-1} \,\mathrm{d}\rho_m  \Biggr)^{\frac{1}{q_m}}\\
&= \left( \prod_{i=1}^m \omega_{n_i, k^{(i)}}^{\frac{1}{\tilde{q}_i}} \right)
\left( \int_0^1 |\log \rho_1|^{q_1} \rho_1^{N_{k^{(1)}}-1} \,\mathrm{d}\rho_1 \right)^{\frac{1}{q_1}}\prod_{j=2}^m \left( \int_0^1 \rho_j^{N_{k^{(j)}}-1} \,\mathrm{d}\rho_j \right)^{\frac{1}{q_j}}.
\end{align*}

Since $N_{k^{(j)}} > 0$, we note that
\begin{align*}
\int_0^1 \rho_j^{N_{k^{(j)}}-1} \,\mathrm{d}\rho_j = \frac{1}{N_{k^{(j)}}} < \infty.
\end{align*}

Let $u = -\log \rho_1$ and $v = N_{k^{(1)}} u$, we can calculate
\begin{align*}
\int_0^1 |\log \rho_1|^{q_1} \rho_1^{N_{k^{(1)}}-1} \,\mathrm{d}\rho_1
&= \int_{\infty}^0 |-u|^{q_1} (e^{-u})^{N_{k^{(1)}}-1} (-e^{-u}) \,\mathrm{d}u\\
&=\int_0^\infty u^{q_1} e^{-N_{k^{(1)}}u + u} e^{-u} \,\mathrm{d}u\\
&= \int_0^\infty u^{q_1} e^{-N_{k^{(1)}}u} \,\mathrm{d}u\\
&=\int_0^\infty \left( \frac{v}{N_{k^{(1)}}} \right)^{q_1} e^{-v} \frac{1}{N_{k^{(1)}}} \,\mathrm{d}v\\
&= \frac{1}{(N_{k^{(1)}})^{q_1+1}} \int_0^\infty v^{q_1} e^{-v} \,\mathrm{d}v\\
&= \frac{\Gamma(q_1+1)}{(N_{k^{(1)}})^{q_1+1}},
\end{align*}
where $\Gamma(z) = \int_0^\infty t^{z-1} e^{-t} \,\mathrm{d}t$.  We conclude that $b_0 \in \mathrm{CMO}_{\vec q, \vec{\tilde q}, k}(\mathbb{R}^n)$.

From a routine computation, there holds
\begin{align*}
\mathcal{H}_{\varphi}^{b_0}f_0(x)
&=\int_0^1\bigl(b_0(x)-b_0(tx)\bigr)f_0(tx)\varphi(t)\,\mathrm{d}t\\
&=\int_0^1\log\frac1t\prod_{i=1}^m(t\rho_i)^{-\frac{N_{k^{(i)}}}{\alpha_i}}\varphi(t)\,\mathrm{d}t\\
&=\prod_{i=1}^m\rho_i^{-\frac{N_{k^{(i)}}}{\alpha_i}}\int_0^1t^{-\sum\limits_{i=1}^m\frac{N_{k^{(i)}}}{\alpha_i}}\varphi(t)\log\frac1t\,\mathrm{d}t\\
&=f_0(x)\int_0^1t^{-\frac{N_k}{p}}\varphi(t)\log\frac1t\,\mathrm{d}t.
\end{align*}
Using the boundedness of $\mathcal{H}_{\varphi}^{b_0}$ from
$\dot{M}_{\vec r, \vec{\tilde r}, k}^p(\mathbb{R}^n)$ to
$\dot{M}_{\vec s, \vec{\tilde s}, k}^p(\mathbb{R}^n)$, and the fact that
$f_0\in\dot{M}_{\vec s, \vec{\tilde s}, k}^p(\mathbb{R}^n)$, we obtain
\[
\frac{\|f_0\|_{\dot{M}_{\vec s, \vec{\tilde s}, k}^p(\mathbb{R}^n)}}{\|f_0\|_{\dot{M}_{\vec r, \vec{\tilde r}, k}^p(\mathbb{R}^n)}}
\int_0^1t^{-\frac{N_k}{p}}\varphi(t)\log\frac1t\,\mathrm{d}t<\infty.
\]

That is to say,
\begin{equation*}
\int_0^1t^{-\frac{N_k}{p}}\varphi(t)\log\frac1t\,\mathrm{d}t<\infty.
\end{equation*}
We can also get
\begin{equation*}
\int_0^{\frac12}t^{-\frac{N_k}{p}}\varphi(t)\,\mathrm{d}t
\lesssim\int_0^{\frac12}t^{-\frac{N_k}{p}}\varphi(t)\log\frac1t\,\mathrm{d}t<\infty.
\end{equation*}
On the other hand, since $\varphi$ is integrable on $[\frac12,1]$ and $p>0$, we know that
\begin{equation*}
\int_{\frac12}^{1}t^{-\frac{N_k}{p}}\varphi(t)\,\mathrm{d}t\leq2^{\frac{N_k}{p}}\int_{\frac12}^{1}\varphi(t)\,\mathrm{d}t<\infty.
\end{equation*}

Therefore,
\begin{equation*}
\int_0^1t^{-\frac{N_k}{p}}\varphi(t)\,\mathrm{d}t<\infty.
\end{equation*}
Combining with
\[
\int_0^1t^{-\frac{N_k}{p}}\varphi(t)\log\frac1t\,\mathrm{d}t<\infty,
\]
we obtain
\[
\int_0^1t^{-\frac{N_k}{p}}\varphi(t)\log\frac{2}{t}\,\mathrm{d}t<\infty.
\]

Therefore,  the proof of Theorem \ref{3} is finished.
\end{proof}

\begin{proof}[Proof of Theorem 4.2]
Suppose that (\ref{12}) holds. Since $\varphi$ is locally integrable on $(0,1]$, condition (\ref{12}) implies
\[
\int_0^1 t^{N_k\lambda}\varphi(t)\,\mathrm{d}t<\infty.
\]
For $R>0$, by the definition of $\mathcal{H}_{\varphi,b}$, we have
\begin{align*}
&\frac{\left\|\mathcal{H}_{\varphi}^{b}f\,\chi_{B(0,R)}\right\|_{L_{\mathrm{rad}, k}^{\vec r} L_{\mathrm{ang}, k}^{\vec{\tilde r}}(\mathbb R^n)}
}{|B(0,R)|_k^\lambda\left\|\chi_{B(0,R)}\right\|_{L_{\mathrm{rad}, k}^{\vec r} L_{\mathrm{ang}, k}^{\vec{\tilde r}}(\mathbb R^n)}} \\
&=\frac{\left\|\int_0^1\bigl(b(\cdot)-b(t\cdot)\bigr)f(t\cdot)\varphi(t)\,\mathrm{d}t\,
\chi_{B(0,R)}(\cdot)\right\|_{L_{\mathrm{rad}, k}^{\vec r} L_{\mathrm{ang}, k}^{\vec{\tilde r}}(\mathbb R^n)}}{
|B(0,R)|_k^\lambda\left\|\chi_{B(0,R)}\right\|_{L_{\mathrm{rad}, k}^{\vec r} L_{\mathrm{ang}, k}^{\vec{\tilde r}}(\mathbb R^n)}} \\
&\leq J+JJ+JJJ,
\end{align*}
where
\begin{align*}
J&=\frac{\left\|\int_0^1\bigl(b(\cdot)-b_{B(0,R),k}\bigr)f(t\cdot)\varphi(t)\,\mathrm{d}t\,\chi_{B(0,R)}(\cdot)
\right\|_{L_{\mathrm{rad}, k}^{\vec r} L_{\mathrm{ang}, k}^{\vec{\tilde r}}(\mathbb R^n)}
}{|B(0,R)|_k^\lambda\left\|\chi_{B(0,R)}\right\|_{L_{\mathrm{rad}, k}^{\vec r} L_{\mathrm{ang}, k}^{\vec{\tilde r}}(\mathbb R^n)}},\\
JJ&=\frac{\left\|\int_0^1\bigl(b_{B(0,R),k}-b_{B(0,tR),k}\bigr)f(t\cdot)\varphi(t)\,\mathrm{d}t\,
\chi_{B(0,R)}(\cdot)\right\|_{L_{\mathrm{rad}, k}^{\vec r} L_{\mathrm{ang}, k}^{\vec{\tilde r}}(\mathbb R^n)}}{|B(0,R)|_k^\lambda
\left\|\chi_{B(0,R)}\right\|_{L_{\mathrm{rad}, k}^{\vec r} L_{\mathrm{ang}, k}^{\vec{\tilde r}}(\mathbb R^n)}},\\
JJJ&=\frac{\left\|\int_0^1\bigl(b_{B(0,tR),k}-b(t\cdot)\bigr)f(t\cdot)\varphi(t)\,\mathrm{d}t\,\chi_{B(0,R)}(\cdot)
\right\|_{L_{\mathrm{rad}, k}^{\vec r} L_{\mathrm{ang}, k}^{\vec{\tilde r}}(\mathbb R^n)}}{
|B(0,R)|_k^\lambda\left\|\chi_{B(0,R)}\right\|_{L_{\mathrm{rad}, k}^{\vec r} L_{\mathrm{ang}, k}^{\vec{\tilde r}}(\mathbb R^n)}}.
\end{align*}

For the first term $J$, using Minkowski's inequality and H\"older's inequality, we obtain
\begin{align*}
J&\leq\frac1{|B(0,R)|_k^\lambda\left\|\chi_{B(0,R)}\right\|_{L_{\mathrm{rad}, k}^{\vec r} L_{\mathrm{ang}, k}^{\vec{\tilde r}}(\mathbb R^n)}
}\int_0^1 \varphi(t) \\
&\quad\times\left\|\bigl(b(\cdot)-b_{B(0,R),k}\bigr)\chi_{B(0,R)}\right\|_{L_{\mathrm{rad}, k}^{\vec r_2} L_{\mathrm{ang}, k}^{\vec{\tilde r}_2}(\mathbb R^n)}
\left\|f(t\cdot)\chi_{B(0,R)}\right\|_{L_{\mathrm{rad}, k}^{\vec r_1} L_{\mathrm{ang}, k}^{\vec{\tilde r}_1}(\mathbb R^n)}\,\mathrm{d}t .
\end{align*}
By the definition of $\mathrm{CMO}_{\vec r_2, \vec{\tilde r}_2, k}(\mathbb R^n)$,
\[
\left\|
\bigl(b-b_{B(0,R),k}\bigr)\chi_{B(0,R)}
\right\|_{L_{\mathrm{rad}, k}^{\vec r_2} L_{\mathrm{ang}, k}^{\vec{\tilde r}_2}(\mathbb R^n)}
\leq
\left\|b\right\|_{\mathrm{CMO}_{\vec r_2, \vec{\tilde r}_2, k}(\mathbb R^n)}
\left\|\chi_{B(0,R)}\right\|_{L_{\mathrm{rad}, k}^{\vec r_2} L_{\mathrm{ang}, k}^{\vec{\tilde r}_2}(\mathbb R^n)}.
\]
Moreover, by the definition of $\dot{B}^{\vec r_1, \vec{\tilde r}_1,\lambda}_k(\mathbb R^n)$,
\begin{align*}
&\left\|f(t\cdot)\chi_{B(0,R)}\right\|_{L_{\mathrm{rad}, k}^{\vec r_1} L_{\mathrm{ang}, k}^{\vec{\tilde r}_1}(\mathbb R^n)}\\
&=t^{-\sum\limits_{i=1}^{m}\frac{N_{k^{(i)}}}{r_{1i}}}\left\|f\chi_{B(0,tR)}\right\|_{L_{\mathrm{rad}, k}^{\vec r_1} L_{\mathrm{ang}, k}^{\vec{\tilde r}_1}(\mathbb R^n)}\\
&\leq t^{-\sum\limits_{i=1}^{m}\frac{N_{k^{(i)}}}{r_{1i}}}\left\|f\right\|_{\dot{B}^{\vec r_1, \vec{\tilde r}_1,\lambda}_k} |B(0, tR)|_k^\lambda \left\|\chi_{B(0,tR)}\right\|_{L_{\mathrm{rad}, k}^{\vec r_1} L_{\mathrm{ang}, k}^{\vec{\tilde r}_1}(\mathbb R^n)}\\
&\leq t^{-\sum_{i=1}^{m}\frac{N_{k^{(i)}}}{r_{1i}}} \left\|f\right\|_{\dot{B}^{\vec r_1, \vec{\tilde r}_1,\lambda}_k} \left( t^{N_k\lambda}|B(0,R)|_k^\lambda \right) \left( t^{\sum_{i=1}^{m}\frac{N_{k^{(i)}}}{r_{1i}}}\left\|\chi_{B(0,R)}\right\|_{L_{\mathrm{rad}, k}^{\vec r_1} L_{\mathrm{ang}, k}^{\vec{\tilde r}_1}(\mathbb R^n)} \right) \\
&= t^{N_k\lambda} \left\|f\right\|_{\dot{B}^{\vec r_1, \vec{\tilde r}_1,\lambda}_k} |B(0,R)|_k^\lambda \left\|\chi_{B(0,R)}\right\|_{L_{\mathrm{rad}, k}^{\vec r_1} L_{\mathrm{ang}, k}^{\vec{\tilde r}_1}(\mathbb R^n)}.
\end{align*}
We get
\begin{align*}
J &\leq \int_0^1 \varphi(t) \left\|b\right\|_{\mathrm{CMO}_{\vec r_2, \vec{\tilde r}_2, k}} t^{N_k\lambda} \left\|f\right\|_{\dot{B}^{\vec r_1, \vec{\tilde r}_1,\lambda}_k} \frac{ |B(0,R)|_k^\lambda \left\|\chi_{B(0,R)}\right\|_{L_{\mathrm{rad}, k}^{\vec r_1} L_{\mathrm{ang}, k}^{\vec{\tilde r}_1}} \left\|\chi_{B(0,R)}\right\|_{L_{\mathrm{rad}, k}^{\vec r_2} L_{\mathrm{ang}, k}^{\vec{\tilde r}_2}} }{ |B(0,R)|_k^\lambda \left\|\chi_{B(0,R)}\right\|_{L_{\mathrm{rad}, k}^{\vec r} L_{\mathrm{ang}, k}^{\vec{\tilde r}}} } \,\mathrm{d}t \\
&= \left\|b\right\|_{\mathrm{CMO}_{\vec r_2, \vec{\tilde r}_2, k}} \left\|f\right\|_{\dot{B}^{\vec r_1, \vec{\tilde r}_1,\lambda}_k} \left( \int_0^1 t^{N_k\lambda} \varphi(t) \,\mathrm{d}t \right) \frac{ \left\|\chi_{B(0,R)}\right\|_{L_{\mathrm{rad}, k}^{\vec r_1} L_{\mathrm{ang}, k}^{\vec{\tilde r}_1}} \left\|\chi_{B(0,R)}\right\|_{L_{\mathrm{rad}, k}^{\vec r_2} L_{\mathrm{ang}, k}^{\vec{\tilde r}_2}} }{ \left\|\chi_{B(0,R)}\right\|_{L_{\mathrm{rad}, k}^{\vec r} L_{\mathrm{ang}, k}^{\vec{\tilde r}}} }\\
&\lesssim \left\|b\right\|_{\mathrm{CMO}_{\vec r_2, \vec{\tilde r}_2, k}(\mathbb R^n)} \left\|f\right\|_{\dot{B}^{\vec r_1, \vec{\tilde r}_1,\lambda}_k(\mathbb R^n)} \int_0^1 t^{N_k\lambda}\varphi(t)\,\mathrm{d}t.
\end{align*}

For the last term $JJJ$, by Minkowski's inequality and H\"older's inequality, we have
\begin{align*}
JJJ &\leq \frac{1}{|B(0, R)|_k^{\lambda} \|\chi_{B(0, R)}\|_{L_{\mathrm{rad}, k}^{\vec{r}} L_{\mathrm{ang}, k}^{\vec{\tilde{r}}}(\mathbb R^n)}} \int_0^1 \varphi(t) \\
&\quad\times \|f(t\cdot)\chi_{B(0, R)}\|_{L_{\mathrm{rad}, k}^{\vec{r}_1} L_{\mathrm{ang}, k}^{\vec{\tilde{r}}_1}(\mathbb R^n)}
\|(b_{B(0, tR), k} - b(t\cdot))\chi_{B(0, R)}\|_{L_{\mathrm{rad}, k}^{\vec{r}_2} L_{\mathrm{ang}, k}^{\vec{\tilde{r}}_2}(\mathbb R^n)} \,\mathrm{d}t.
\end{align*}

Because of
\begin{equation*}
\|\chi_{B(0, tR)}\|_{L_{\mathrm{rad}, k}^{\vec{r}_2} L_{\mathrm{ang}, k}^{\vec{\tilde{r}}_2}(\mathbb R^n)} = t^{\sum\limits_{i=1}^m \frac{N_{k^{(i)}}}{r_{2i}}} \|\chi_{B(0, R)}\|_{L_{\mathrm{rad}, k}^{\vec{r}_2} L_{\mathrm{ang}, k}^{\vec{\tilde{r}}_2}(\mathbb R^n)},
\end{equation*}
we can get
\begin{align*}
&\left\|(b_{B(0, tR), k} - b(t\cdot))\chi_{B(0, R)}\right\|_{L_{\mathrm{rad}, k}^{\vec{r}_2} L_{\mathrm{ang}, k}^{\vec{\tilde{r}}_2}(\mathbb R^n)} \\
&= t^{-\sum\limits_{i=1}^m \frac{N_{k^{(i)}}}{r_{2i}}} \left\|(b_{B(0, tR), k} - b)\chi_{B(0, tR)}\right\|_{L_{\mathrm{rad}, k}^{\vec{r}_2} L_{\mathrm{ang}, k}^{\vec{\tilde{r}}_2}(\mathbb R^n)} \\
&\leq t^{-\sum\limits_{i=1}^m \frac{N_{k^{(i)}}}{r_{2i}}} \|b\|_{\mathrm{CMO}_{\vec{r}_2, \vec{\tilde{r}}_2, k}(\mathbb R^n)} \|\chi_{B(0, tR)}\|_{L_{\mathrm{rad}, k}^{\vec{r}_2} L_{\mathrm{ang}, k}^{\vec{\tilde{r}}_2}(\mathbb R^n)}\\
&= \|b\|_{\mathrm{CMO}_{\vec{r}_2, \vec{\tilde{r}}_2, k}(\mathbb R^n)} \|\chi_{B(0, R)}\|_{L_{\mathrm{rad}, k}^{\vec{r}_2} L_{\mathrm{ang}, k}^{\vec{\tilde{r}}_2}(\mathbb R^n)}.
\end{align*}
Along with the previous estimate for $f(t\cdot)$, we obtain
\begin{align*}
JJJ
&\lesssim\left\|b\right\|_{\mathrm{CMO}_{\vec r_2, \vec{\tilde r}_2, k}(\mathbb R^n)}
\left\|f\right\|_{\dot{B}^{\vec r_1, \vec{\tilde r}_1,\lambda}_k(\mathbb R^n)}
\int_0^1 t^{N_k\lambda}\varphi(t)\,\mathrm{d}t .
\end{align*}

Finally, we estimate the middle term $JJ$. By Minkowski's inequality and H\"older's inequality,
\begin{align*}
JJ
&\leq\frac1{|B(0,R)|_k^\lambda\left\|\chi_{B(0,R)}\right\|_{L_{\mathrm{rad}, k}^{\vec r} L_{\mathrm{ang}, k}^{\vec{\tilde r}}(\mathbb R^n)}}
\int_0^1\left|b_{B(0,R),k}-b_{B(0,tR),k}\right|\varphi(t) \\
&\quad\times\left\|f(t\cdot)\chi_{B(0,R)}\right\|_{L_{\mathrm{rad}, k}^{\vec r_1} L_{\mathrm{ang}, k}^{\vec{\tilde r}_1}(\mathbb R^n)}
\left\|\chi_{B(0,R)}\right\|_{L_{\mathrm{rad}, k}^{\vec r_2} L_{\mathrm{ang}, k}^{\vec{\tilde r}_2}(\mathbb R^n)}\,\mathrm{d}t \\
&\lesssim\left\|f\right\|_{\dot{B}^{\vec r_1, \vec{\tilde r}_1,\lambda}_k(\mathbb R^n)}\int_0^1\left|b_{B(0,R),k}-b_{B(0,tR),k}\right|t^{N_k\lambda}\varphi(t)\,\mathrm{d}t .
\end{align*}

Similarly to Theorem \ref{3}, we have
\begin{align*}
|b_{B(0,R),k} - b_{B(0,tR),k}|
&\leq \sum_{v=1}^{l} |b_{B(0,2^{-v+1}R),k} - b_{B(0,2^{-v}R),k}| + |b_{B(0,2^{-l}R),k} - b_{B(0,tR),k}|\\
&\lesssim \sum_{v=1}^{l} \|b\|_{\mathrm{CMO}_{\vec r_2, \vec{\tilde r}_2, k}(\mathbb R^n)} +  \|b\|_{\mathrm{CMO}_{\vec r_2, \vec{\tilde r}_2, k}(\mathbb R^n)} \\
&= (l+1) \|b\|_{\mathrm{CMO}_{\vec r_2, \vec{\tilde r}_2, k}(\mathbb R^n)}.
\end{align*}

Finally, the relation $ t< 2^{-l+1}$ implies $l-1 < \log_2(1/t)$. Thus, $l+1 \lesssim 1 + \log(1/t)$, which leads to our desired estimate:
\begin{align*}
|b_{B(0,R),k} - b_{B(0,tR),k}| \lesssim \|b\|_{\mathrm{CMO}_{\vec r_2, \vec{\tilde r}_2, k}(\mathbb R^n)} \left(1 + \log\frac{1}{t}\right).
\end{align*}

Therefore,
\[
JJ\lesssim
\left\|f\right\|_{\dot{B}^{\vec r_1, \vec{\tilde r}_1,\lambda}_k(\mathbb R^n)}
\left\|b\right\|_{\mathrm{CMO}_{\vec r_2, \vec{\tilde r}_2, k}(\mathbb R^n)}
\int_0^1t^{N_k\lambda}\left(1+\log\frac1t\right)\varphi(t)\,\mathrm{d}t .
\]
Combining the estimates of $J$, $JJ$ and $JJJ$, we obtain
\[
\left\|\mathcal{H}_{\varphi}^{b}f\right\|_{\dot{B}^{\vec r, \vec{\tilde r},\lambda}_k(\mathbb R^n)}
\lesssim\left\|b\right\|_{\mathrm{CMO}_{\vec r_2, \vec{\tilde r}_2, k}(\mathbb R^n)}
\left\|f\right\|_{\dot{B}^{\vec r_1, \vec{\tilde r}_1,\lambda}_k(\mathbb R^n)}
\int_0^1t^{N_k\lambda}\left(1+\log\frac1t\right)\varphi(t)\,\mathrm{d}t .
\]
Since (\ref{12}) and the local integrability of $\varphi$ imply
\[
\int_0^1t^{N_k\lambda}\left(1+\log\frac1t\right)\varphi(t)\,\mathrm{d}t<\infty,
\]
it follows that $\mathcal{H}_{\varphi}^{b}$ is bounded from
$\dot{B}^{\vec r_1, \vec{\tilde r}_1,\lambda}_k(\mathbb R^n)$ to
$\dot{B}^{\vec r, \vec{\tilde r},\lambda}_k(\mathbb R^n)$.

Conversely, suppose that $\mathcal{H}_{\varphi}^{b}$ is bounded from
$\dot{B}^{\vec r_1, \vec{\tilde r}_1,\lambda}_k(\mathbb R^n)$ to
$\dot{B}^{\vec r, \vec{\tilde r},\lambda}_k(\mathbb R^n)$ for all
$b\in\mathrm{CMO}_{\vec r_2, \vec{\tilde r}_2, k}(\mathbb R^n)$.
Let
\[
b_0(x)=\log |x|,
\qquad x\neq0,
\]
and define $b_0(0)=0$. Similarly to Theorem \ref{3}, it is known that
\[
b_0\in\mathrm{CMO}_{\vec r_2, \vec{\tilde r}_2, k}(\mathbb R^n).
\]
Since $\lambda>-\frac1{N_k}\sum\limits_{i=1}^{m}\frac{N_{k^{(i)}}}{r_{1i}},$
we can choose real numbers $\lambda_1,\ldots,\lambda_m$ such that
$\lambda_i>-\frac{N_{k^{(i)}}}{r_{1i}},\, i=1,\ldots,m,$
and $\sum\limits_{i=1}^{m}\lambda_i=N_k\lambda.$
Let us define a purely radial function
\[
f_0(x) = f_0(\rho_1\omega_1, \ldots, \rho_m\omega_m) = \prod_{i=1}^{m} \rho_i^{\lambda_i}.
\]
Similarly to Theorem \ref{2}, then
\[f_0\in \dot{B}^{\vec r_1, \vec{\tilde r}_1,\lambda}_k(\mathbb R^n)\cap
\dot{B}^{\vec r, \vec{\tilde r},\lambda}_k(\mathbb R^n).
\]

For $x\neq0$, we have
\begin{align*}
\mathcal{H}_{\varphi}^{b_0} f_0(x)
&=\int_0^1\bigl(\log|x|-\log|tx|\bigr)\prod_{i=1}^{m}(t\rho_i)^{\lambda_i}\varphi(t)\,\mathrm{d}t \\
&=f_0(x)\int_0^1t^{\sum_{i=1}^{m}\lambda_i}\log\frac1t\,\varphi(t)\,\mathrm{d}t \\
&=f_0(x)\int_0^1t^{N_k\lambda}\log\frac1t\,\varphi(t)\,\mathrm{d}t .
\end{align*}
By the boundedness of $\mathcal{H}_{\varphi}^{b_0}$, we get
\[
\left\|\mathcal{H}_{\varphi}^{b_0}f_0\right\|_{\dot{B}^{\vec r, \vec{\tilde r},\lambda}_k(\mathbb R^n)}
\leq\left(\int_0^1t^{N_k\lambda}\log\frac1t\,\varphi(t)\,\mathrm{d}t\right)
\left\|f_0\right\|_{\dot{B}^{\vec r, \vec{\tilde r},\lambda}_k(\mathbb R^n)}<\infty.
\]
Since $f_0\not\equiv0$, it follows that
\[
\int_0^1
t^{N_k\lambda}
\log\frac1t\,\varphi(t)\,\mathrm{d}t<\infty.
\]
This completes the proof of Theorem 4.2.
\end{proof}

\begin{proof}[Proof of Theorem 4.3]
Suppose that (\ref{13}) holds. For $R>0$, as in the proof of Theorem \ref{4}, we decompose
\[
\frac{\left\|\mathcal{H}_{\varphi}^{b}f\,\chi_{B(0,R)}\right\|_{L_{\mathrm{rad}, k}^{\vec r} L_{\mathrm{ang}, k}^{\vec{\tilde r}}(\mathbb R^n)}
}{|B(0,R)|_k^\lambda\left\|\chi_{B(0,R)}\right\|_{L_{\mathrm{rad}, k}^{\vec r} L_{\mathrm{ang}, k}^{\vec{\tilde r}}(\mathbb R^n)}}\leq K+KK+KKK,
\]
where
\begin{align*}
K&=\frac{\left\|\int_0^1\bigl(b(\cdot)-b_{B(0,R),k}\bigr)f(t\cdot)\varphi(t)\,\mathrm{d}t\,\chi_{B(0,R)}(\cdot)
\right\|_{L_{\mathrm{rad}, k}^{\vec r} L_{\mathrm{ang}, k}^{\vec{\tilde r}}(\mathbb R^n)}
}{|B(0,R)|_k^\lambda\left\|\chi_{B(0,R)}\right\|_{L_{\mathrm{rad}, k}^{\vec r} L_{\mathrm{ang}, k}^{\vec{\tilde r}}(\mathbb R^n)}},\\
KK&=\frac{\left\|\int_0^1\bigl(b_{B(0,R),k}-b_{B(0,tR),k}\bigr)f(t\cdot)\varphi(t)\,\mathrm{d}t\,
\chi_{B(0,R)}(\cdot)\right\|_{L_{\mathrm{rad}, k}^{\vec r} L_{\mathrm{ang}, k}^{\vec{\tilde r}}(\mathbb R^n)}}{|B(0,R)|_k^\lambda
\left\|\chi_{B(0,R)}\right\|_{L_{\mathrm{rad}, k}^{\vec r} L_{\mathrm{ang}, k}^{\vec{\tilde r}}(\mathbb R^n)}},\\
KKK&=\frac{\left\|\int_0^1\bigl(b_{B(0,tR),k}-b(t\cdot)\bigr)f(t\cdot)\varphi(t)\,\mathrm{d}t\,\chi_{B(0,R)}(\cdot)
\right\|_{L_{\mathrm{rad}, k}^{\vec r} L_{\mathrm{ang}, k}^{\vec{\tilde r}}(\mathbb R^n)}}{
|B(0,R)|_k^\lambda\left\|\chi_{B(0,R)}\right\|_{L_{\mathrm{rad}, k}^{\vec r} L_{\mathrm{ang}, k}^{\vec{\tilde r}}(\mathbb R^n)}}.
\end{align*}

Similarly to Theorem \ref{4}, we obtain
\[
K\lesssim
\left\|b\right\|_{\mathrm{CMO}_{\vec r_2, \vec{\tilde r}_2,\lambda_2,k}(\mathbb R^n)}
\left\|f\right\|_{\dot{B}^{\vec r_1, \vec{\tilde r}_1,\lambda_1}_k(\mathbb R^n)}
\int_0^1 t^{N_k\lambda_1}\varphi(t)\,\mathrm{d}t,
\]
and
\begin{align*}
KKK\lesssim
\left\|b\right\|_{\mathrm{CMO}_{\vec r_2, \vec{\tilde r}_2,\lambda_2,k}(\mathbb R^n)}
\left\|f\right\|_{\dot{B}^{\vec r_1, \vec{\tilde r}_1,\lambda_1}_k(\mathbb R^n)}
\int_0^1 t^{N_k\lambda_1}\varphi(t)\,\mathrm{d}t.
\end{align*}

Finally, for $KK$, we have
\begin{align*}
KK&\lesssim
\left\|f\right\|_{\dot{B}^{\vec r_1, \vec{\tilde r}_1,\lambda_1}_k(\mathbb R^n)}
\int_0^1\left|b_{B(0,R),k}-b_{B(0,tR),k}\right|t^{N_k\lambda_1}\varphi(t)\,\frac{\mathrm{d}t}{|B(0,R)|_k^{\lambda_2}} .
\end{align*}

Let $0<t<1$, and choose $l\in\mathbb N\setminus\{0\}$ such that
\[
2^{-l}<t\leq2^{-l+1}.
\]
Then
\begin{align*}
\left|b_{B(0,R),k}-b_{B(0,tR),k}\right|
\leq\sum_{v=1}^{l}\left|b_{B(0,2^{-v+1}R),k}-b_{B(0,2^{-v}R),k}\right|+\left|b_{B(0,2^{-l}R),k}-b_{B(0,tR),k}\right|.
\end{align*}
Let $(\vec r_2)'=(r_{21}',\ldots,r_{2m}')$ and
$(\vec{\tilde r}_2)'=(\tilde r_{21}',\ldots,\tilde r_{2m}')$ be defined by
\[
\frac{1}{r_{2i}}+\frac{1}{r_{2i}'}=1,
\qquad\frac{1}{\tilde r_{2i}}+\frac{1}{\tilde r_{2i}'}=1,\qquad i=1,\ldots,m.
\]
For every $v=1,\ldots,l$, using
\[
|B(0,2^{-v+1}R)|_k
=
2^{N_k}|B(0,2^{-v}R)|_k,
\]
the mixed H\"older inequality, and the definition of
$\mathrm{CMO}_{\vec r_2,\vec{\tilde r}_2,\lambda_2,k}(\mathbb R^n)$, we obtain
\begin{align*}
&\left|b_{B(0,2^{-v}R),k}-b_{B(0,2^{-v+1}R),k}\right|\\
&=\frac{1}{|B(0,2^{-v}R)|_k}\left|\int_{B(0,2^{-v}R)}\left(b(x)-b_{B(0,2^{-v+1}R),k}\right)\,\mathrm{d}\mu_k(x)\right|\\
&\lesssim\frac{1}{|B(0,2^{-v+1}R)|_k}\int_{B(0,2^{-v+1}R)}\left|b(x)-b_{B(0,2^{-v+1}R),k}\right|\,\mathrm{d}\mu_k(x)\\
&\leq\frac{1}{|B(0,2^{-v+1}R)|_k}\left\|\left(b-b_{B(0,2^{-v+1}R),k}\right)\chi_{B(0,2^{-v+1}R)}\right\|_{L_{\mathrm{rad},k}^{\vec r_2}L_{\mathrm{ang},k}^{\vec{\tilde r}_2}(\mathbb R^n)}\\
&\quad\times\left\|\chi_{B(0,2^{-v+1}R)}\right\|_{L_{\mathrm{rad},k}^{(\vec r_2)'}L_{\mathrm{ang},k}^{(\vec{\tilde r}_2)'}(\mathbb R^n)}\\
&\lesssim\left\|b\right\|_{\mathrm{CMO}_{\vec r_2,\vec{\tilde r}_2,\lambda_2,k}(\mathbb R^n)}|B(0,2^{-v+1}R)|_k^{\lambda_2}\\
&=2^{-N_k\lambda_2(v-1)}\left\|b\right\|_{\mathrm{CMO}_{\vec r_2,\vec{\tilde r}_2,\lambda_2,k}(\mathbb R^n)}|B(0,R)|_k^{\lambda_2},
\end{align*}
where we have used
\[
\left\|\chi_{B(0,\rho)}\right\|_{L_{\mathrm{rad},k}^{\vec r_2}L_{\mathrm{ang},k}^{\vec{\tilde r}_2}(\mathbb R^n)}
\left\|\chi_{B(0,\rho)}\right\|_{L_{\mathrm{rad},k}^{(\vec r_2)'}L_{\mathrm{ang},k}^{(\vec{\tilde r}_2)'}(\mathbb R^n)}
\lesssim|B(0,\rho)|_k.
\]
Since $\lambda_2>0$, it follows that
\begin{align*}
&\sum_{v=1}^{l}
\left|b_{B(0,2^{-v+1}R),k}-b_{B(0,2^{-v}R),k}\right|\\
&\lesssim\left\|b\right\|_{\mathrm{CMO}_{\vec r_2,\vec{\tilde r}_2,\lambda_2,k}(\mathbb R^n)}|B(0,R)|_k^{\lambda_2}\sum_{v=1}^{l}2^{-N_k\lambda_2(v-1)}\\
&\lesssim\left\|b\right\|_{\mathrm{CMO}_{\vec r_2,\vec{\tilde r}_2,\lambda_2,k}(\mathbb R^n)}|B(0,R)|_k^{\lambda_2}.
\end{align*}
Moreover, since
\[
B(0,2^{-l}R)\subset B(0,tR)
\quad\text{and}\quad
|B(0,tR)|_k\leq2^{N_k}|B(0,2^{-l}R)|_k,
\]
we similarly obtain
\begin{align*}
&\left|b_{B(0,2^{-l}R),k}-b_{B(0,tR),k}\right|\\
&=\frac{1}{|B(0,2^{-l}R)|_k}\left|\int_{B(0,2^{-l}R)}\left(b(x)-b_{B(0,tR),k}\right)\,\mathrm{d}\mu_k(x)\right|\\
&\lesssim\frac{1}{|B(0,tR)|_k}\int_{B(0,tR)}\left|b(x)-b_{B(0,tR),k}\right|\,\mathrm{d}\mu_k(x)\\
&\lesssim\left\|b\right\|_{\mathrm{CMO}_{\vec r_2,\vec{\tilde r}_2,\lambda_2,k}(\mathbb R^n)}|B(0,tR)|_k^{\lambda_2}\\
&\leq\left\|b\right\|_{\mathrm{CMO}_{\vec r_2,\vec{\tilde r}_2,\lambda_2,k}(\mathbb R^n)}
|B(0,R)|_k^{\lambda_2}.
\end{align*}

Therefore,
\[
KK\lesssim
\left\|b\right\|_{\mathrm{CMO}_{\vec r_2, \vec{\tilde r}_2,\lambda_2,k}(\mathbb R^n)}\left\|f\right\|_{\dot{B}^{\vec r_1, \vec{\tilde r}_1,\lambda_1}_k(\mathbb R^n)}
\int_0^1 t^{N_k\lambda_1}\varphi(t)\,\mathrm{d}t .
\]
Combining the estimates of $K$, $KK$ and $KKK$, we obtain
\[
\left\|\mathcal{H}_{\varphi}^{b}f\right\|_{\dot{B}^{\vec r, \vec{\tilde r},\lambda}_k(\mathbb R^n)}
\lesssim\left\|b\right\|_{\mathrm{CMO}_{\vec r_2, \vec{\tilde r}_2,\lambda_2,k}(\mathbb R^n)}
\left\|f\right\|_{\dot{B}^{\vec r_1, \vec{\tilde r}_1,\lambda_1}_k(\mathbb R^n)}
\int_0^1 t^{N_k\lambda_1}\varphi(t)\,\mathrm{d}t .
\]
Hence $\mathcal{H}_{\varphi}^{b}$ is bounded from
$\dot{B}^{\vec r_1, \vec{\tilde r}_1,\lambda_1}_k(\mathbb R^n)$ to
$\dot{B}^{\vec r, \vec{\tilde r},\lambda}_k(\mathbb R^n)$.
This finishes the proof  of Theorem \ref{5}.
\end{proof}

\end{document}